\DeclareTextSymbol{\thh}{T1}{254}
\def\th{\textnormal{\thh}}
\newtheorem{thm}{Theorem}[subsection]
\newtheorem{lemma}[thm]{Lemma}
\newtheorem{prop}[thm]{Proposition}
\newtheorem{cor}[thm]{Corollary}
\newtheorem{result}[thm]{Result}
\newtheorem{df}[thm]{Definition}
\newtheorem{rmk}[thm]{Remark}
\newtheorem{rmks}[thm]{Remarks}
\newtheorem{question}[thm]{Question}
\newenvironment{renumerate}
        {
         \begin{enumerate}}
\newcommand{\BB}{\mathbb{B}}  % sort B
\newcommand{\BK}{\mathbb{K}}  % sort K
\newcommand{\BN}{\mathbb{N}}
\newcommand{\cu}[1]{\mathcal{#1}}
\newcommand{\bo}[1]{\mathbf{#1}}
\newcommand{\ti}[1]{\widetilde{#1}}
\newcommand{\sa}[1]{\mathsf{#1}}
\renewcommand{\hat}{\widehat}
\def\indsym#1#2{%
  \setbox0=\hbox{$\m@th#1x$}%
  \kern\wd0%
  \hbox to 0pt{\hss$\m@th#1\mid$\hbox to 0pt{$\m@th#1^{#2}$}\hss}%
  \lower.9\ht0\hbox to 0pt{\hss$\m@th#1\smile$\hss}%
  \kern\wd0}
\newcommand{\ind}[1][]{\mathop{\mathpalette\indsym{#1}}}
\def\nindsym#1#2{%
  \setbox0=\hbox{$\m@th#1x$}%
  \kern\wd0%
  \hbox to 0pt{\hss$\m@th#1\not$\kern1.4\wd0\hss}
  \hbox to 0pt{\hss$\m@th#1\mid$\hbox to 0pt{$\m@th#1^{\,#2}$}\hss}%
  \lower.9\ht0\hbox to 0pt{\hss$\m@th#1\smile$\hss}%
  \kern\wd0}
\newcommand{\nind}[1][]{\mathop{\mathpalette\nindsym{#1}}}
\def\dotminussym#1#2{%
  \setbox0=\hbox{$\m@th#1-$}%
  \kern.5\wd0%
  \hbox to 0pt{\hss\hbox{$\m@th#1-$}\hss}%
  \raise.6\ht0\hbox to 0pt{\hss$\m@th#1.$\hss}%
  \kern.5\wd0}
\newcommand{\dotminus}{\mathbin{\mathpalette\dotminussym{}}}
\def \<{\langle}
\def \>{\rangle}
\def \((  {(\!(}
\def \)) {)\!)}
\def \cl {\operatorname{cl}}
\def \tp{\operatorname{tp}}
\def \acl{\operatorname{acl}}
\def \dcl{\operatorname{dcl}}
\def \APr{\operatorname{APr}}
\def \div{\operatorname{div}}
\def \as {\operatorname{a.s}}
\def \fo{\operatorname{fdcl}}
\def \l{\llbracket}
\def \rr{\rrbracket}
\def\thind{\ind[\th]}
\begin{document}

\title{Independence in Randomizations}

\author{Uri Andrews}

\address{Department of Mathematics\\
University of Wisconsin, 480 Lincoln Drive, Madison, WI 53706, USA}
\email{andrews@math.wisc.edu}

\author{Isaac Goldbring}

\address{Department of Mathematics, Statistics, and Computer Science\\
University of Illinois at Chicago, Science and Engineering Offices (M/C 249), \\
851 S. Morgan St., Chicago, IL 60607-7045, USA and Department of Mathematics\\University of California, Irvine, 340 Rowland Hall (Bldg.\# 400),
Irvine, CA 92697-3875}
\email{isaac@math.uci.edu}

\author{H. Jerome Keisler}

\address{Department of Mathematics\\
University of Wisconsin, 480 Lincoln Drive, Madison WI 53706, USA}
\email{keisler@math.wisc.edu}
\date{\today}

\begin{abstract}
The randomization of a complete first order theory $T$ is the complete continuous theory $T^R$
with two sorts, a sort for random elements of models of $T$, and a sort for events in an underlying atomless probability space.
We study independence relations and related ternary relations on the randomization of $T$.
We show that if $T$ has the exchange property and $\acl=\dcl$, then $T^R$ has a strict independence relation in the home sort, and hence is real rosy. In particular, if $T$ is o-minimal, then $T^R$ is real rosy.
\end{abstract}

%\begin{keyword}
%randomization \sep independence relations \sep metric structure \sep continuous theory \sep rosy theory
%\MSC[2010] Primary 03C40 \sep Secondary 03B48 \sep 03B50 \sep 03C35
%\end{keyword}

\maketitle

\section{Introduction}

A randomization of a first order structure $\cu M$, as introduced by Keisler [Ke1] and formalized as a metric structure by Ben Yaacov and Keisler [BK], is a continuous structure $\cu N$ with two sorts, a sort for random elements of $\cu M$, and a sort for events in an underlying atomless probability space.
For each $n$-tuple $\bar{ a}$ of random elements of $\cu M$ and each first order
formula $\varphi(\bar v)$, the set of points in the underlying probability space where $\varphi(\bar{ a})$ is true is an event in $\cu N$ denoted by $\l\varphi(\bar{ a})\rr$.
Given a complete first order theory $T$, the theory $T^R$ of randomizations of models of $T$ forms a complete theory in continuous logic, which is called the randomization of $T$.

In general, the theories $T$ and $T^R$ share many model-theoretic features.  It was shown in [BK] that $T$ is stable if and only if $T^R$ is stable.  Unfortunately, the analogous result for simplicity in place of stability is false, as was shown in [Be2].  More precisely,
$T^R$ is simple only if $T^R$ is stable.

It is apparent that a model theoretic notion such as o-minimality of $T$ is not preserved in $T^R$, as $T^R$ will not even be an ordered theory.  We show in this paper that
if $T$ is o-minimal, then $T^R$ retains some model theoretic tameness. In particular, we show that if $T$ is o-minimal, then $T^R$  has a strict independence relation.
We will actually show somewhat more.  It is well known that every o-minimal theory has $\acl=\dcl$ and the  exchange property (for algebraic closure).  Our main theorem shows that if
$T$ has $\acl=\dcl$ and the  exchange property, then $T^R$ has a strict independence relation that has local character with the smallest possible bound.

In view of the results on stable and simple theories stated above, it is natural to ask: \emph{Is $T$ rosy if and only if $T^R$ is rosy?}
Recall that a first order theory $T$ is \emph{rosy} if $T^{\operatorname{eq}}$ possesses a strict  independence relation.  $T$ is \emph{real rosy} if $T$ has a strict independence relation (so rosy implies real rosy).
Moreover, every real rosy theory has a weakest strict independence relation,  namely \emph{thorn independence}, a notion first introduced by Thomas Scanlon.   Classical rosy theories were first studied in the theses of Alf Onshuus and Clifton Ealy and are a common generalization of simple theories and o-minimal theories.  Continuous rosy theories were first studied in the paper of Ealy and Goldbring in [EG].
It is shown in [EG] that for every real rosy continuous theory, thorn independence is the weakest strict countable independence relation.  Our main theorem in this paper shows that if $T$ is o-minimal, then $T^R$ is real rosy.

On the way to our main result,  we study various notions of independence in models of $T^R$, including algebraic independence, dividing independence, and thorn independence.
We introduce the ``pointwise'' version of an arbitrary notion of independence on models of $T$, that is, the notion of independence on models of $T^R$ obtained by asking for independence almost everywhere in the underlying probability space.  By gluing together the pointwise version of thorn independence, with dividing independence restricted to the event sort, we are able to produce a strict independence relation in $T^R$ whenever $T$ has $\acl=\dcl$ and the exchange property.

We conclude this introduction with an outline of the rest of the paper.  In Section 2, we recall the relevant background from continuous logic as well as the general theory of abstract independence relations as exposited in [Ad2].  In Section 3 we show that first order theories with the exchange property are real rosy, and admit a characterization of thorn independence that facilitates the study of pointwise thorn independence.
In Section 4, we introduce the notion of a countably based independence relation and the countable union property.  For every ternary relation $\ind[I]$ with monotonicity, there is a unique countably based relation that agrees with $\ind[I]$ on countable sets.   This will aid us in defining, for a given ternary relation on small subsets of the big model of $T$, a corresponding pointwise notion.  In Section 5, we recall some basic facts about randomizations as well as some results we will need from [AGK] concerning definable and algebraic closure in models of $T^R$.  We also prove a ``downward'' result:  If $T$ has $\acl=\dcl$ and $T^R$ is real rosy, then $T$ is real rosy.

In Section 6, we begin the study of notions of independence in models of $T^R$ in earnest but restrict our attention to the event sort.
Section 7 is concerned with notions of pointwise independence.  Given a ternary relation $\ind[I]$ with monotonicity on models of $T$, $\ind[I\omega] \ \ $ is the countably based relation on small subsets of the big model of $T^R$ such that for all countable  $A,B,C$, $A\ind[I\omega]_C \ B$ holds if and only if $A(\omega)\ind[I]_{C(\omega)} B(\omega)$ holds for almost all $\omega$ in the underlying probability space.  The results of Section 4 guarantee the unique existence of $\ind[I\omega] \ \ $.  We then prove that whenever
$T$ has the exchange property, pointwise thorn independence is an independence relation but is not strict.  Lastly, in Section 8, we put together the previous results to produce a strict independence relation when $T$ has the exchange property and $\acl=\dcl$.

Continuous model theory in its current form is developed in the papers [BBHU] and [BU].  Randomizations of models are treated in [AGK], [AK], [Be2], [BK], [EG], [GL1], and [Ke1].

\section{Preliminaries on Continuous Logic}

We will follow the notation and terminology of [BK] and [AGK].
We assume familiarity with the basic notions about continuous model theory as developed
in [BBHU], including the notions of a theory, structure, pre-structure,  model of a theory,
elementary extension, isomorphism, and $\kappa$-saturated structure.
In particular, the universe of a pre-structure is a pseudo-metric space, the universe of a
structure is a complete metric space, and every pre-structure has a unique completion.
A \emph{tuple} is a finite sequence, and $A^{<\BN}$ is the set of all tuples of elements of $A$.
We use the word ``countable'' to mean of cardinality at most $\aleph_0$.
We assume throughout that $L$ is a countable first order signature, and let $[L]$ denote the set of formulas of $L$.
We will sometimes write $\varphi(A)$ for a first order formula with finitely many parameters in a set $A$,
and use similar notation for more than one parameter set.

Throughout this paper, $T$ will denote a complete first order theory, and $U$ will denote a complete first order or continuous theory.
 $\upsilon$ denotes an uncountable inaccessible cardinal that is held fixed.  By a
\emph{big model} of  $U$ we mean a saturated model $\cu N\models U$ of cardinality $|\cu N|=\upsilon$ (or finite).
Thus every complete theory has a unique big model up to isomorphism.  For this reason, we sometimes refer to ``the'' big model of  $U$.
We call a set \emph{small} if it has cardinality $< \upsilon$, and \emph{large} otherwise.
$\cu M$ will denote the big model of $T$ with universe $M$.  In Sections 2 through 4, $\cu N$ will denote the big model of $U$ (so $\cu N$ is a first order
structure if $U$ is a first order theory, and a continuous structure if $U$ is a continuous theory).

To avoid the assumption that uncountable inaccessible cardinals exist, one can instead assume only that $\upsilon=\upsilon^{\aleph_0}$
and take a big model to be an $\upsilon$-universal domain, as in [BBHU], Definition 7.13.  With that approach, a big model exists but is not unique.

\subsection{Definability}

\begin{df}   In first order logic, a formula $\varphi(u,\bar v)$ is \emph{functional} in  $T$ if
$$T\models(\forall \bar v)(\exists ^{\le  1} u)\varphi(u,\bar v).$$
$\varphi(u,\bar v)$ is \emph{algebraical} in $T$ if there exists $n\in\BN$ such that
$$T\models(\forall \bar v)(\exists ^{\le  n} u)\varphi(u,\bar v).$$
\end{df}

The \emph{definable closure} of $A$ in  $\cu M$  is the set
$$\dcl^{\cu M}(A)=\{b\in M\mid \cu M\models\varphi(b,\bar a) \mbox{ for some functional } \varphi \mbox{ and } \bar a\in A^{<\BN}\}.$$
The \emph{algebraic closure} of $A$ in $\cu M$ is the set
$$\acl^{\cu M}(A)=\{b\in M\mid \cu M\models\varphi(b,\bar a) \mbox{ for some algebraical } \varphi \mbox{ and } \bar a\in A^{<\BN}\}.$$

We refer to [BBHU] for the definitions of the algebraic closure $\acl^{\cu N}(A)$ and definable closure $\dcl^{\cu N}(A)$ in a continuous structure $\cu N$.
If  $\cu N$ is clear from the context, we will sometimes drop the superscript and write $\dcl, \acl$ instead of $ \dcl^\cu N, \acl^\cu N$.
We will often use the following facts without explicit mention.

\begin{result}  \label{f-algebraic-cardinality}  (Follows from [BBHU], Exercise 10.8)
For every set $A$, $\acl(A)$ has cardinality at most $( |A|+2)^{\aleph_0}$.  Thus the algebraic closure of a small set is small.
\end{result}

\begin{result} \label{f-definableclosure}  (Definable Closure, Exercises 10.10 and 10.11, and Corollary 10.5 in [BBHU])
\begin{enumerate}
\item If $ A\subseteq\cu N$ then $\dcl( A)=\dcl(\dcl( A))$ and $\acl( A)=\acl(\acl( A))$.
\item If $ A$ is a dense subset of the topological closure of $B$ and $ B\subseteq\cu N$, then $\dcl(A)=\dcl( B)$ and $\acl(A)=\acl( B)$.
\end{enumerate}
\end{result}

It follows that for any $ A\subseteq\cu N$, $\dcl( A)$ and $\acl( A)$ are topologically closed.

\subsection{Abstract Independence Relations}

Since the various properties of independence are given some slightly different names in
various parts of the literature, we take this opportunity to declare that we are following the terminology established in [Ad2], which is
repeated here for the reader's convenience.  In this paper, we will sometimes write $AB$ for $A\cup B$, and write $[A,B]$ for
$\{D\mid A\subseteq D \wedge D\subseteq B\}.$

\begin{df}[Adler]  Let $\cu N$ be the big model of $U$.  By a \emph{ternary relation for $U$}, or a \emph{ternary relation over $\cu N$},  we mean
a ternary relation $\ind$ on the small subsets of $\cu N$.  We say that $\ind$
is an \emph{independence relation} if it satisfies the following \emph{axioms for independence relations} for all small sets:
\begin{renumerate}
\item (Invariance) If $A\ind_CB$ and $(A',B',C')\equiv (A,B,C)$, then $A'\ind_{C'}B'$.
\item (Monotonicity) If $A\ind_C B$, $A'\subseteq A$, and $B'\subseteq B$, then $A'\ind_C B'$.
\item (Base monotonicity) Suppose $C\in[D, B]$.  If $A\ind_D B$, then $A\ind_C B$.
\item (Transitivity) Suppose $C\in[D, B]$.  If $B\ind_C A$ and $C\ind_D A$, then $B\ind_D A$.
\item (Normality) $A\ind_CB$ implies $AC\ind_C B$.
\item (Extension) If $A\ind_C B$ and $\hat B\supseteq B$, then there is $A'\equiv_{BC} A$ such that $A'\ind_C \hat B$.
\item (Finite character) If $A_0\ind_CB$ for all finite $A_0\subseteq A$, then $A\ind_CB$.
\item (Local character) For every $A$, there is a cardinal $\kappa(A)<\upsilon$ such that, for every $B$, there is a subset $C$ of $B$ with $|C|<\kappa(A)$ such that $A\ind_CB$.
\end{enumerate}
\end{renumerate}
We will refer to the first five axioms (i)--(v) as the \emph{basic axioms}.
\end{df}

As the trivial independence relation (which declares $A\ind_CB$ to always hold) is obviously of little interest, one adds an extra
condition to avoid such trivialities.

\begin{df}
An independence relation $\ind$ is \emph{strict} if it satisfies
\begin{itemize}
\item[(ix)] (Anti-reflexivity) $a\ind_B a$ implies $a\in \acl(B)$.
\end{itemize}
\end{df}

%% Added Stationarity from [BU]
There are four other useful properties to consider when studying ternary relations over $\cu N$:
The stationarity property is taken from [BU], Theorem 8.10.

\begin{df}
\noindent\begin{enumerate}
\item[(x)] (Countable character) If $A_0\ind_CB$ for all countable $A_0\subseteq A$, then $A\ind_CB$.
\item[(xi)] (Full existence) For every small $A,B,C$, there is $A'\equiv_C A$ such that $A'\ind_C B$.
\item[(xii)] (Symmetry) For every small $A,B,C$, $A\ind_CB$ implies $B\ind_CA$.
\item[(xiii)] (Stationarity)  For all small $B, C$ and tuples $\bar a, \bar g$, if $C$ is algebraically closed,
$\bar a\equiv_C \bar g$, $\bar a\ind_C BC$, and  $\bar g\ind_C BC$, then $\bar a\equiv_{BC} \bar g$.
\end{enumerate}
\end{df}

\begin{df}  We say that $\ind$ is a \emph{countable independence relation} if it satisfies all the axioms for an independence relation except that finite character is replaced by countable character.  A \emph{strict countable independence relation} is a countable independence relation that satisfies anti-reflexivity.
\end{df}

\begin{rmks}  \label{r-fe-vs-ext}

\noindent\begin{itemize}
\item[(1)]  Whenever $\ind$ satisfies invariance, monotonicity, transitivity, normality, full existence, and symmetry,
then $\ind$ also satisfies extension (Remark 1.2 in [Ad2]).
\item[(2)] If $\ind$ satisfies base monotonicity and local character, then $A\ind_C C$ for all small $A,C$ (Appendix to [Ad1]).
\item[(3)] If $\ind$ satisfies monotonicity and extension, and $A\ind_C C$ holds for all small $A,C$, then $\ind$ also satisfies full existence
(Appendix to [Ad1]).
\item[(4)]  Any countable independence relation has symmetry.
\end{itemize}
\end{rmks}

In the first order setting, Remarks \ref{r-fe-vs-ext} are proved in [Ad1] and [Ad2].  It is straightforward to check that these proofs persist in the continuous setting.  Theorem 2.5 in [Ad2] shows that any independence
relation has symmetry.  The same argument with Morley sequences of length $\omega_1$ instead of countable Morley sequences proves (4).

\begin{result}  \label{r-pairs} (Pairs Lemma)  Suppose $\ind$ has monotonicity, base monotonicity, transitivity, and symmetry.  Then
$$AD\ind_C B \Leftrightarrow A\ind_{CD} BD \wedge D\ind_C B.$$
\end{result}

For a proof of Result \ref{r-pairs}, see, for example, Proposition 17 in [GL2].

\begin{df}

\noindent\begin{enumerate}
\item We say that $\ind$ has \emph{countably local character} if for every countable set $A$ and every small set $B$, there is a countable subset $C$ of $B$ such that $A\ind_CB$.
\item We say that $\ind$ has \emph{small local character} if  for all small sets $A, B, C_0$
such that $C_0\subseteq B$ and $|C_0|\le|A|+\aleph_0$, there is
a set $C\in[C_0,B]$ such that $|C|\le|A|+\aleph_0$ and $A\ind_C \ B$.
\end{enumerate}
\end{df}

\begin{rmk}  \label{r-countably-localchar}

\noindent\begin{enumerate}
\item If $\ind$ has small local character, then $\ind$ has local character with bound $\kappa(D)=(|D|+\aleph_0)^+$ (the smallest possible bound).
In the presence of base monotonicity, the converse is also true.
\item If $\ind$ has local character with bound $\kappa(D)=(|D|+\aleph_0)^+$, then $\ind$ has countably local character.
\item (Compare with Remark 1.3 in [Ad2]) If $\ind$ has invariance, countable character, base monotonicity, and countably local character, then
$\ind$ has local character with bound $$\kappa(D)=((|D| + \aleph_0)^{\aleph_0})^+.$$
\end{enumerate}
\end{rmk}

\begin{proof}  (1) and (2) are obvious.

(3)  Fix a small set $D$.  By countably local character, for each countable subset $A$ of $D$ and any small set $B$, there is a countable subset $C(A,B)$ of $B$
such that $A\ind_{C(A,B)} \ B$.  Let $C=\bigcup\{C(A,B)\mid A\subseteq D, |A|\le\aleph_0\}$.  Then $|C|\le (|D| + \aleph_0)^{\aleph_0}$.
By base monotonicity, we have $A\ind_C B$ for each countable $A\subseteq D$.
By countable character, $D\ind_C B$, so $\ind$ has local character with bound $\kappa(D)$.
\end{proof}

We say that $\ind[J]$ is \emph{weaker than} $\ind[I]$, and write $\ind[I]\Rightarrow \ind[J]$, if $A\ind[I]_C B\Rightarrow A\ind[J]_C B$.

\begin{rmk}  \label{r-weaker}  Suppose $\ind[I]\Rightarrow\ind[J]$.  If $\ind[I]$ has full existence, local character,
countably local character, or small local character, then $\ind[J]$ \ has the same property.
\end{rmk}

\subsection{Special Independence Relations}

\noindent  In this paper we will introduce many special ternary relations in both first order and continuous logic.  We will be interested in which of the independence axioms hold for these relations, and sometimes call them notions of independence.  At the end of the paper we include an appendix where the reader can review these notions of independence.

 In both first order and continuous logic, we define the notion of \emph{algebraic independence}, denoted $\ind[a]$,
by setting $A\ind[a]_CB$ to mean $\acl(AC)\cap \acl(BC)=\acl(C)$. In first order logic, $\ind[a]$ satisfies all axioms for a strict
independence relation except perhaps for base monotonicity.

\begin{prop}  \label{p-alg-indep}
In continuous logic, $\ind[a]$ satisfies symmetry and all axioms for a strict countable independence relation except perhaps for
base monotonicity and extension.
\end{prop}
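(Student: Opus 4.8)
The plan is to verify each required axiom directly from the definition. The key preliminary observation is that since $C\subseteq AC$ and $C\subseteq BC$ one always has $\acl(C)\subseteq\acl(AC)\cap\acl(BC)$; hence $A\ind[a]_C B$ is equivalent to the single inclusion $\acl(AC)\cap\acl(BC)\subseteq\acl(C)$, and throughout I only need to establish this inclusion. I would dispose of the easy axioms first. Invariance is immediate, since any automorphism $\sigma$ of the big model realizing $(A',B',C')\equiv(A,B,C)$ commutes with both $\acl$ and intersection, so it carries the defining equation for $(A,B,C)$ to that for $(A',B',C')$. Monotonicity follows because $\acl$ is monotone, so shrinking $A$ and $B$ only shrinks $\acl(AC)\cap\acl(BC)$. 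Normality is trivial because $\acl(ACC)=\acl(AC)$, so $A\ind[a]_C B$ and $AC\ind[a]_C B$ are literally the same condition; symmetry is trivial because intersection is commutative; and anti-reflexivity is immediate, since $a\ind[a]_B a$ unfolds to $\acl(aB)=\acl(B)$, forcing $a\in\acl(B)$.

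For transitivity, assume $D\subseteq C\subseteq B$, $B\ind[a]_C A$, and $C\ind[a]_D A$. Using $\acl(BC)=\acl(B)$ and $\acl(CD)=\acl(C)$, these hypotheses read $\acl(B)\cap\acl(AC)=\acl(C)$ and $\acl(C)\cap\acl(AD)=\acl(D)$, while the goal $B\ind[a]_D A$ reads $\acl(B)\cap\acl(AD)=\acl(D)$. Given $x\in\acl(B)\cap\acl(AD)$, monotonicity of $\acl$ gives $x\in\acl(AC)$ (as $AD\subseteq AC$), so the first equation yields $x\in\acl(C)$, and then the second yields $x\in\acl(D)$, as desired.

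Countable character rests on the fact that in continuous logic $\acl$ has countable character: if $x\in\acl(X)$ then $x\in\acl(X_0)$ for some countable $X_0\subseteq X$, because algebraicity is witnessed by a definable predicate, and a definable predicate depends on only countably many parameters. Granting this, suppose $A_0\ind[a]_C B$ for every countable $A_0\subseteq A$ and let $x\in\acl(AC)\cap\acl(BC)$. Choose a countable $A_0\subseteq A$ with $x\in\acl(A_0C)$; then $x\in\acl(A_0C)\cap\acl(BC)=\acl(C)$, giving $A\ind[a]_C B$.

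The main work, and the step I expect to be the obstacle, is local character, where base monotonicity is unavailable so Remark~\ref{r-countably-localchar}(3) cannot be invoked. Here I would construct a suitable $C\subseteq B$ by a transfinite closure of length $\omega_1$: set $C_0=\emptyset$, take unions at limits, and given $C_\alpha\subseteq B$ let $C_{\alpha+1}$ consist of $C_\alpha$ together with, for each $y\in\acl(AC_\alpha)\cap\acl(B)$, a countable subset of $B$ whose algebraic closure contains $y$ (available by countable character of $\acl$). Running to length $\omega_1$ rather than $\omega$ is essential: it guarantees that every countable subset of $C=C_{\omega_1}$ already lies in a single $C_\alpha$, so that if $y\in\acl(AC)\cap\acl(B)$ then, writing $y\in\acl(AC')$ for a countable $C'\subseteq C$ and taking $\alpha$ with $C'\subseteq C_\alpha$, we get $y\in\acl(AC_\alpha)\cap\acl(B)$ and hence $y\in\acl(C_{\alpha+1})\subseteq\acl(C)$; this yields $A\ind[a]_C B$. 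A routine cardinal computation using Fact~\ref{f-algebraic-cardinality} together with $(\lambda^{\aleph_0})^{\aleph_0}=\lambda^{\aleph_0}$ bounds $|C|$ by $(|A|+\aleph_0)^{\aleph_0}$, so local character holds with $\kappa(A)=\bigl((|A|+\aleph_0)^{\aleph_0}\bigr)^{+}<\upsilon$.
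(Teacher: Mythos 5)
Your proof is correct and follows essentially the same route as the paper, which simply cites Adler's Proposition 1.5 and notes the two continuous-logic modifications you carry out explicitly: countable character of $\acl$ in place of finite character, and the enlarged local-character bound (your $((|A|+\aleph_0)^{\aleph_0})^+$ is the same cardinal as the paper's $((|A|+2)^{\aleph_0})^+$). The $\omega_1$-length closure argument for local character is exactly the right adaptation, since the uncountable cofinality of $\omega_1$ is what lets a countable witness for $y\in\acl(AC)$ land inside a single stage $C_\alpha$.
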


\begin{proof}
The proof is exactly as in [Ad2], Proposition 1.5, except for some minor modifications.  For example, countable character of $\acl$ in
continuous logic yields countable character of $\ind[a]$.  Also, in the verification of local character, one needs to take
$\kappa(A):=((|A|+2)^{\aleph_0})^+$ instead of $(|A|+\aleph_0)^+$.
\end{proof}

Recall the following definitions from [Ad2]:

\begin{df}  \label{d-special-ind}
Suppose that $A,B,C$ are small subsets of the big model of $U$.
\begin{itemize}
\item $A\ind[M]_CB$ iff for every $D\in[C,\acl(BC)]$, we have $A\ind[a]_{D}B$.
\item $A\thind_CB$ iff for every (small) $E\supseteq BC$, there is $A'\equiv_{BC}A$ such that $A'\ind[M]_CE$.
\end{itemize}
\end{df}

Note that $\thind\Rightarrow\ind[M] \ $ and $\ind[M]\Rightarrow\ind[a]$.  Also, $\thind = \ind[M] \ \ $ if and only if $\ind[M] \ $ satisfies extension.
In [Ad2], it is shown that, in the first order setting, $\ind[M]$ \ satisfies all of the axioms for a strict independence relation except
perhaps for local character and extension (but now base monotonicity has been ensured).  It is shown in [EG] that
this fact remains true in continuous logic, except for finite character being replaced by countable character.

By [Ad1] and [Ad2], in the first order setting
$\thind$ satisfies all of the axioms for a strict independence relation except perhaps  local
character\footnote{Finite character follows from Proposition A.2 of [Ad1], and is stated explicitly in [Ad3], Proposition 1.3.}
(but now extension has been ensured).  It is shown in [EG] that in the continuous setting,
$\thind$ satisfies all of the axioms for a strict countable independence relation except perhaps for countable character and local character.

The theory $U$ is said to be \emph{real rosy} if $\thind$ in $U$ has local character.
Examples of real rosy theories are the first order or continuous stable theories ([BBHU, Section 14), the first order or continuous simple theories (see [Be3]),
and the first order o-minimal theories (see [On]).

The following results are consequences of Theorem 3.2 and the preceding discussion in [EG].  The proof of part (2) is the same as the proof of Remark
4.1 in [Ad2].

\begin{result}  \label{f-weakest}

\noindent\begin{enumerate}
\item $U$ is real rosy if and only if $\thind$ is a strict countable independence relation.
\item
If $\ind[I]$ satisfies the basic axioms and extension, symmetry, and anti-reflexivity, then $\ind[I]\Rightarrow\thind$.
\item If a theory has a strict countable independence relation, then it is real rosy, and $\thind$ is the weakest strict countable
independence relation.
\end{enumerate}
\end{result}

A continuous formula $\Phi(\bar x,B,C)$  \emph{divides over} $C$ if, in the big model of $U$,
there is a $C$-indiscernible sequence $\<B^i\>_{i\in\BN}$
such that $B^0\equiv_C B$ and the set of statements $\{\Phi(\bar x,B^i,C)=0\mid i\in\BN\}$ is not satisfiable.
The \emph{dividing independence relation} $A\ind[d]_C B$ is defined to hold if there is no tuple $\bar a\in A^{<\BN}$ and continuous formula
$\Phi(\bar x,B,C)$ such that $\Phi(\bar a,B,C)=0$ and $\Phi(\bar x, B,C)$ divides over $C$.

The next result is an easy consequence of Theorem 8.10 in [BU] and Theorems 14.12 and 14.14 in [BBHU].

%%  This result is strengthened from the previous version by adding uniqueness.
\begin{result}  \label{f-stable-indep}
If $U$ is stable, then on the big model of $U$, $\ind[d]$ is the unique strict independence relation  that has small local character
and stationarity.
\end{result}

\begin{cor}  \label{c-simple-thorn}
If $U$ is stable, then $\ind[d]\Rightarrow\thind$ in the big model of $U$.
\end{cor}

\begin{proof}
By Results \ref{f-weakest} and \ref{f-stable-indep}.
\end{proof}

\section{First Order Theories with the Exchange Property}  \label{s-exchange}

\subsection{Blanketing and Exchange Independence}

We  introduce two ternary relations on $\cu M$  that will be useful technical tools for proving that other relations have small local
character and full existence.

\begin{df}  We define the relation $A\ind[b]_C \ B$ on $\cu M$, read ``$C$ blankets $A$ in $B$'',  to hold
if and only if for every first order formula
$\varphi(\bar x,\bar y,\bar z)\in[L]$ and all tuples $\bar{ a}\in A^{|\bar x|}$, $\bar{ b}\in B^{|\bar y|}$ and  $\bar{ c}\in C^{|\bar z|}$,
there exists $\bar{ d}\in C^{|\bar y|}$ such that
$$ \cu M\models\varphi(\bar{ a},\bar{ b},\bar{ c})\Rightarrow\varphi(\bar{ a},\bar{ d},\bar{ c}).$$
\end{df}

\begin{lemma}  \label{l-ind[b]-easy}
The relation $\ind[b]$ has  invariance, monotonicity, normality, finite character, and small local character, and $\ind[b]\Rightarrow\ind[a]$.
\end{lemma}

\begin{proof}  We prove $\ind[b]\Rightarrow\ind[a]$.  The other parts are straightforward and left to the reader.

 Suppose $A, B, C$ are small  and $A\ind[b]_C B$ in $\cu M$.  Let $e\in\acl^{\cu M}(AC)\cap\acl^{\cu M}(BC)$.  Then there are algebraical formulas
$\varphi(u,\bar x,\bar z), \psi(u,\bar y,\bar w)$ and tuples $\bar a\in A^{<\BN}, \bar b\in B^{<\BN}, \bar c,\bar {g}\in C^{<\BN}$ such that
$$\cu M\models\varphi(e,\bar a,\bar c)\wedge\psi(e,\bar b,\bar {g})$$
and
$$(\forall u\in M)[\cu M\models \varphi(u,\bar a,\bar c)\Rightarrow \tp(u/AC)=\tp(e/AC)].$$
Then
$$\cu M\models(\exists u)[\varphi(u,\bar a,\bar c)\wedge\psi(u,\bar b,\bar {g})].$$
Since $A\ind[b]_C B$, there exists $\bar d\in C^{<\BN}$ such that
$$\cu M\models(\exists u)[\varphi(u,\bar a,\bar c)\wedge\psi(u,\bar d,\bar {g})].$$
Therefore
$$\cu M\models\psi(e,\bar d,\bar {g}),$$
so $e\in\acl^{\cu M}(C).$
\end{proof}

Note that transitivity is not mentioned in Lemma \ref{l-ind[b]-easy}.

Following Grossberg and Lessman [GL2],
we define a ternary relation $A\ind[e]_C B$ for $T$, that we will call ``exchange independence''.
([GL2] gave essentially the same definition for arbitrary pre-geometries).  We will see below that  $T$ has the exchange property if and only if $\ind[e]$ is equivalent to $\thind$.
This will be useful because $\ind[e]$ is easier to handle than $\thind$.

\begin{df}  Let $A, B, C$ be small subsets of $M$.  We define $A\ind[e]_C B$ to mean that for all finite $\bar a\in A^{<\BN}$,
$$ A\cap \acl(\bar a B C)\subseteq \acl(\bar a C).$$
\end{df}

We stress that in the previous definition $\bar a$ may be empty.

\begin{lemma}  \label{l-ind[e]-char}
$A\ind[e]_C B$ if and only if
\begin{equation}  \label{eq-ind[e]}
(\forall D\in[C,AC] )(A\cap\acl(BD)\subseteq\acl(D)).
\end{equation}
\end{lemma}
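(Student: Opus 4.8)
Prove Lemma 3.1.8 (the characterization of $\ind[e]$): $A\ind[e]_C B$ iff $(\forall D\in[C,AC])(A\cap\acl(BD)\subseteq\acl(D))$.

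**Plan.** The statement is an equivalence between the definition of $\ind[e]$ — which quantifies over finite tuples $\bar a\in A^{<\BN}$ and asks $A\cap\acl(\bar a BC)\subseteq\acl(\bar a C)$ — and a condition quantifying over intermediate sets $D$ with $C\subseteq D\subseteq AC$. The plan is to prove both directions directly, exploiting that a finite tuple $\bar a$ from $A$ produces the particular intermediate set $D=C\bar a$, and conversely that finite character considerations let us reduce an arbitrary $D$ to the finite-tuple case.

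**Direction $(\Leftarrow)$.** Assume the displayed condition \eqref{eq-ind[e]} holds. To verify $A\ind[e]_C B$, fix a finite $\bar a\in A^{<\BN}$ and set $D:=C\bar a$. Since $\bar a\in A^{<\BN}$ we have $C\subseteq D\subseteq AC$, so $D\in[C,AC]$ and the hypothesis gives $A\cap\acl(BD)\subseteq\acl(D)$, i.e. $A\cap\acl(\bar a BC)\subseteq\acl(\bar a C)$. This is exactly the defining condition for $\bar a$, so $A\ind[e]_C B$ holds.

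**Direction $(\Rightarrow)$.** Assume $A\ind[e]_C B$ and fix $D\in[C,AC]$; I want $A\cap\acl(BD)\subseteq\acl(D)$. The idea is that every element of $D$ lies in $A\cup C$, so $D$ is covered by $C$ together with elements of $A$. Take any $b\in A\cap\acl(BD)$. By countable (or finite) character of $\acl$ in the continuous setting (Fact~\ref{f-definableclosure}), the witnessing parameters from $BD$ for $b\in\acl(BD)$ involve only finitely many elements of $D$; writing those elements as $\bar a$ together with parameters from $C$, and noting $\bar a\in A^{<\BN}$, we get $b\in A\cap\acl(\bar a B C)$. The definition of $\ind[e]$ then yields $b\in\acl(\bar a C)\subseteq\acl(D)$, since $\bar a\in D$ and $C\subseteq D$. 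As $b$ was arbitrary, $A\cap\acl(BD)\subseteq\acl(D)$.

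**Main obstacle.** The one subtlety is the reduction in $(\Rightarrow)$ of an element of $\acl(BD)$ to an element of $\acl(\bar a BC)$ for a \emph{finite} $\bar a\subseteq A$: one must argue that only finitely many elements of $D$ are needed and that these may be taken from $A$. Since $D\subseteq AC$, each element of $D$ is either in $A$ or contributes to $C$; I would split $D\subseteq A\cup C$, absorb the $C$-part into the base, and invoke finite character of algebraic closure to reduce the $A$-part to a finite tuple $\bar a$. I expect this to be routine given Fact~\ref{f-definableclosure}, so the lemma should follow cleanly.
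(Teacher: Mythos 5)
Your proof is correct and follows essentially the same route as the paper's: the forward direction specializes $D$ to $\bar a C$, and the reverse direction uses finite character of $\acl$ together with the decomposition $D\subseteq A\cup C$ to extract a finite tuple $\bar a\in (A\cap D)^{<\BN}$ with $b\in\acl(\bar a BC)$, then concludes via $\acl(\bar a C)\subseteq\acl(D)$. The only cosmetic slip is the appeal to the ``continuous setting''---here $\cu M$ is the first-order big model, where finite character of $\acl$ is immediate---but this does not affect the argument.
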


\begin{proof}  Assume (1) and let $\bar a\in A^{<\BN}$ and $e\in A\cap\acl(\bar a BC)$.  Then $D=\bar a C\in[C,AC]$ and $e\in \acl(BD)$, so $e\in\acl(\bar a C)$  by (1).

Now assume $A\ind[e]_C B$ and let $D\in[C,AC]$ and $e\in A\cap\acl(DB)$.  Then for some $\bar a\in (A\cap D)^{<\BN}$, $e\in\acl(\bar a CB)$, and because $A\ind[e]_C B$
we have $e\in\acl(\bar a C)\subseteq\acl(D)$.
\end{proof}

The condition (1) in the preceding lemma was also considered in [Ad3], where it was denoted  by $B\ind[\Psi_m]_C \ A$.  If follows from Proposition 2.1 of [Ad3]
that $\ind[e]$ satisfies the basic axioms for independence and (strong) finite character.

%%  Added item (2).
\begin{cor}  \label{c-ind[M]-implies-ind[e]}
\noindent\begin{itemize}
\item[(1)] If $B\ind[M]_C \ A$ then $A\ind[e]_C B$.
\item[(2)]  If $T$ is real rosy, then $\thind\Rightarrow\ind[e]$.
\end{itemize}
\end{cor}

\begin{proof}
(1):  Assume $B\ind[M]_C \ A$ and let $D\in[C,AC].$  By the definition of $\ind[M] \ $, we have $B\ind[a]_D A.$  Then $A\ind[a]_D B,$  and hence
$$A\cap\acl(BD)\subseteq\acl(AD)\cap\acl(BD)\subseteq\acl(D).$$
Therefore by Lemma \ref{l-ind[e]-char} we have $A\ind[e]_C B.$

(2): This follows from (1), and the facts that $\thind\Rightarrow \ind[M] \ $, and if $T$ is real rosy then $\thind$ has symmetry.
\end{proof}

\begin{lemma}  \label{l-ind[b]-implies-ind[e]}
$\ind[b]\Rightarrow\ind[e].$
\end{lemma}

\begin{proof}  Suppose $A\ind[b]_C B$.  Let  $\bar a\in A^{<\BN}$ and $e\in A\cap\acl(\bar a BC)$.
Then there is an algebraical formula $\varphi(u,\bar x,\bar y,\bar z)$ and $\bar b\in B^{|\bar y|}, \bar c\in C^{|\bar z|}$
such that $\varphi(e,\bar a,\bar b,\bar c)$.  Since $A\ind[b]_C B$, there exists $\bar d\in C^{|\bar y|}$ such that
$\varphi(e,\bar a,\bar d,\bar c)$.  Therefore $e\in\acl(\bar a C)$.
\end{proof}

\begin{lemma}  \label{l-e-strict}
$\ind[e]$ satisfies anti-reflexivity.
\end{lemma}

\begin{proof}  Taking $\bar a$ to be empty in the definition of $\ind[e]$, we see that $A\ind[e]_C A $ if and only if $A\subseteq \acl(C).$
\end{proof}

\subsection{The Exchange Property}

\begin{df}  A first order theory $T$ has the \emph{exchange property} if
$$a\in\acl(bC)\setminus\acl(C)\Rightarrow b\in\acl(aC),$$
or equivalently, $(M,\acl^{\cu M})$ is a pregeometry.
\end{df}

 Note that, in particular, every o-minimal theory has the exchange property.  In 1994, Hrushovski and Pillay ([HP], Remarks 2.2)
stated that the exchange property ``gives rise to a notion of independence in geometric structures''.
 Theorem \ref{t-indep-FO}
 below justifies that statement with respect to the independence axioms from [Ad2] that we are using here.

\begin{lemma}  \label{l-exchange-symmetry}
$T$ has the exchange property if and only if $\ind[e]$ for $T$ has symmetry.
\end{lemma}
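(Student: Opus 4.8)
The plan is to prove the two implications separately. The reverse direction (symmetry $\Rightarrow$ exchange) is the easy one and can be read off from the definition tested on singletons; the forward direction (exchange $\Rightarrow$ symmetry) is where the real work lies, and I would run it through the equivalent characterization of $\ind[e]$ in Lemma~\ref{l-ind[e]-char} so that base monotonicity is built into the hypothesis.

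For symmetry $\Rightarrow$ exchange, I would argue on a single instance, exploiting that the empty tuple is allowed in the definition of $\ind[e]$. Suppose $a\in\acl(bC)\setminus\acl(C)$, and assume toward a contradiction that $b\notin\acl(aC)$. Unwinding $\{b\}\ind[e]_C\{a\}$, the only binding instance is the empty-tuple one, $\{b\}\cap\acl(aC)\subseteq\acl(C)$, which holds vacuously since $b\notin\acl(aC)$ (the instances using $b$ are trivially true). Hence $\{b\}\ind[e]_C\{a\}$, so by symmetry $\{a\}\ind[e]_C\{b\}$; but its empty-tuple instance is $\{a\}\cap\acl(bC)\subseteq\acl(C)$, and $a\in\acl(bC)$ then forces $a\in\acl(C)$, a contradiction. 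Thus $b\in\acl(aC)$, which is exactly the exchange property.

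For exchange $\Rightarrow$ symmetry, I would assume $A\ind[e]_C B$ and, to get $B\ind[e]_C A$ via Lemma~\ref{l-ind[e]-char}, fix $D'\in[C,BC]$ and $f\in B\cap\acl(AD')$ and show $f\in\acl(D')$. Using finite character of $\acl$, choose $\bar a\in A^{<\BN}$ of minimal length with $f\in\acl(\bar a D')$; if the length is $0$ we are done, so write $\bar a=\bar a^{-}a$ with $f\in\acl(\bar a^{-}aD')\setminus\acl(\bar a^{-}D')$. Applying the exchange property over the base $\bar a^{-}D'$ gives $a\in\acl(\bar a^{-}D'f)$. Now set $D:=\bar a^{-}\cup C\in[C,AC]$; since $\bar a^{-}\subseteq D$, $f\in B$, and $D'\subseteq B\cup C\subseteq B\cup D$ (as $C\subseteq D$), we obtain $a\in A\cap\acl(B\cup D)$, so the hypothesis $A\ind[e]_C B$ yields $a\in\acl(D)=\acl(\bar a^{-}\cup C)\subseteq\acl(\bar a^{-}D')$ (using $C\subseteq D'$). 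Idempotence of $\acl$ then collapses $\acl(\bar a^{-}aD')$ to $\acl(\bar a^{-}D')$, forcing $f\in\acl(\bar a^{-}D')$ and contradicting minimality.

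I expect the main obstacle to be arranging the forward direction so that the roles of $A$ and $B$ genuinely swap: $\ind[e]$ is not literally algebraic independence $\ind[a]$ (it carries base monotonicity), so one cannot simply cite the symmetry of $\ind[a]$. The minimal-length choice of $\bar a$ together with a \emph{single} application of exchange is precisely what transfers an algebraicity statement about $f\in B$ into one about $a\in A$ of the exact shape to which $A\ind[e]_C B$ applies. A more conceptual alternative would be to show under exchange that $\ind[e]$ coincides with the dimension-independence relation of the pregeometry $(M,\acl)$ and invoke additivity, $\dim(\bar a\bar b/C)=\dim(\bar a/C)+\dim(\bar b/\bar a C)$, to get symmetry for free; I would keep the direct argument as the primary plan and treat the dimension-theoretic reformulation as a cross-check.
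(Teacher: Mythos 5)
Your proof is correct. The reverse direction (symmetry $\Rightarrow$ exchange) is essentially the paper's own argument in contrapositive form: the paper notes that $a\in\acl(bC)\setminus\acl(C)$ gives $a\nind[e]_C b$ via the empty-tuple instance, applies symmetry to get $b\nind[e]_C a$, and reads off $b\in\acl(aC)$; your version runs the same singleton analysis assuming $b\notin\acl(aC)$ and derives a contradiction. The real difference is in the forward direction: the paper simply cites Proposition 15 of [GL2] (symmetry of the analogous relation in an arbitrary pregeometry), whereas you give a self-contained proof. Your argument --- pick $\bar a\in A^{<\BN}$ of minimal length with $f\in\acl(\bar a D')$, apply exchange once over the base $\bar a^{-}D'$ to convert the algebraicity of $f\in B$ into algebraicity of $a\in A$ over $B\cup\bar a^{-}\cup C$, then invoke the hypothesis $A\ind[e]_C B$ via the characterization of Lemma~\ref{l-ind[e]-char} with $D=\bar a^{-}C$ to contradict minimality --- is the standard pregeometry exchange argument and is carried out correctly; in particular the set inclusions $\bar a^{-}D'f\subseteq B\cup D$ and $\acl(\bar a^{-}C)\subseteq\acl(\bar a^{-}D')$ that make the bases line up are all verified. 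What your route buys is independence from the external reference; what the paper's route buys is brevity and the observation that the forward direction is purely a fact about pregeometries, with no model theory involved. Your closing remark about the dimension-theoretic reformulation is essentially the content of the cited [GL2] result.
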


\begin{proof}  By Proposition 15 in [GL2], if $T$ has the exchange property then $\ind[e]$ has symmetry.  Suppose that $\ind[e]$
has symmetry.  Let $a\in\acl(bC)\setminus\acl(C)$.  Then $a\nind[e]_C b$.  By symmetry, $b\nind[e]_C a$.  Therefore $b\in \acl(aC)\setminus\acl(C)$, so $T$
has the exchange property.
\end{proof}

%%  Added (3).
\begin{thm}  \label{t-indep-FO}
\noindent\begin{itemize}
\item[(1)]  $T$ has the exchange property if and only if $T$ is real rosy and $\thind=\ind[e].$
\item[(2)]  If $T$ has the exchange property, then $\thind$ has small local character.
\item[(3)]  If $T$ is stable and $\ind[d]=\ind[e]$, then $T$ has the exchange property.
\end{itemize}
\end{thm}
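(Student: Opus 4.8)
The plan is to prove everything by showing that, under the exchange property, $\ind[e]$ is itself a strict countable independence relation and that it coincides with $\thind$. Granting this, part (ii) falls out for free, since I will have shown along the way that $\ind[e]$ has small local character, and hence so does $\thind=\ind[e]$.

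First I would assemble the properties of $\ind[e]$. By the discussion following Lemma~\ref{l-ind[e]-char} (Proposition 2.1 of [Ad3]) $\ind[e]$ already satisfies the basic axioms and finite character. Anti-reflexivity is immediate from the definition: taking the empty tuple, $a\ind[e]_B a$ forces $\{a\}\cap\acl(aB)\subseteq\acl(B)$, whence $a\in\acl(B)$. Small local character (and hence local character with the smallest bound) comes from $\ind[c]$: by Lemma~\ref{l-ind[c]-implies-ind[e]} we have $\ind[c]\Rightarrow\ind[e]$, $\ind[c]$ has small local character by Lemma~\ref{l-ind[c]-easy}, and Remark~\ref{r-weaker} transfers it to $\ind[e]$. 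Finally, Lemma~\ref{l-exchange-symmetry} supplies symmetry of $\ind[e]$ from precisely the exchange hypothesis.

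The one axiom not handed to me is extension, and this is the crux. By Remark~\ref{r-fe-vs-ext}(i), extension follows from invariance, monotonicity, transitivity, normality, and symmetry (all in hand) together with full existence, so it suffices to establish full existence of $\ind[e]$. I would prove this directly by a transfinite construction exploiting the pregeometry: given $A,B,C$, enumerate $A=(a_i)_{i<\lambda}$ and build $A'=(a'_i)$ with $a'_i$ realizing $\tp(a_i/A'_{<i}C)$ at each stage, choosing $a'_i$ outside $\acl(A'_{<i}BC)$ whenever that type is non-algebraic (possible, since a non-algebraic type over a small set has a realization avoiding any prescribed small set). Maintaining $A'_{<i}\ind[e]_C B$ through the step is exactly where exchange is used: rewriting the condition symmetrically via Lemma~\ref{l-ind[e]-char}, if some $e\in B$ newly entered $\acl(A'_{<i}a'_i D)\setminus\acl(A'_{<i}D)$ for $D\in[C,BC]$, then exchange would force $a'_i\in\acl(A'_{<i}De)\subseteq\acl(A'_{<i}BC)$, contradicting the choice of $a'_i$; finite character then passes the property through limit stages and to $A'$. (Alternatively, one may cite the pregeometry development of [GL2].) I expect this full-existence step to be the main obstacle, as it is the only point at which a genuine construction, rather than bookkeeping, is required.

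With $\ind[e]$ now a strict countable independence relation, the remaining assertions follow. By Result~\ref{f-weakest}(3), $T$ is real rosy and $\thind$ is the weakest strict countable independence relation, so $\ind[e]\Rightarrow\thind$. For the reverse implication, real rosiness makes $\thind$ a strict countable independence relation (Result~\ref{f-weakest}(1)), hence symmetric (Remark~\ref{r-fe-vs-ext}(iv)); then $A\thind_C B$ gives $B\thind_C A\Rightarrow B\ind[M]_C A\Rightarrow A\ind[e]_C B$, using $\thind\Rightarrow\ind[M]$ and Corollary~\ref{c-ind[M]-implies-ind[e]}. Thus $\thind=\ind[e]$, which proves the forward direction of (i). For the converse in (i), if $T$ is real rosy with $\thind=\ind[e]$, then $\thind$ is symmetric as above, so $\ind[e]$ is symmetric, and Lemma~\ref{l-exchange-symmetry} yields the exchange property. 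Finally (ii) is immediate, since $\thind=\ind[e]$ and $\ind[e]$ has small local character.
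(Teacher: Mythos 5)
Your proof is correct, and its overall architecture matches the paper's: establish that $\ind[e]$ is a strict countable independence relation under exchange, invoke Result \ref{f-weakest} to get real rosiness and $\ind[e]\Rightarrow\thind$, obtain the reverse implication from $\thind\Rightarrow\ind[M]\ $, Corollary \ref{c-ind[M]-implies-ind[e]}, and symmetry, derive small local character from $\ind[c]\Rightarrow\ind[e]$, and use Lemma \ref{l-exchange-symmetry} for the converse of (i). The one place you genuinely diverge is the step you correctly identify as the crux. The paper disposes of the entire verification that $\ind[e]$ is a strict independence relation (extension and local character included) in one stroke by citing Corollary 4.3 of [Ad3] together with Lemma \ref{l-ind[e]-char}; you instead assemble the axioms piecemeal and supply a self-contained transfinite construction proving full existence, then pass to extension via Remark \ref{r-fe-vs-ext}(i). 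Your construction is sound: the split on whether the transported type of $a_i$ over $A'_{<i}C$ is algebraic, the choice of $a'_i$ outside the small set $\acl(A'_{<i}BC)$ in the non-algebraic case, the use of the inductive hypothesis in the symmetric form of Lemma \ref{l-ind[e]-char} to see that any offending $e\in B$ lies in $\acl(A'_{<i}a'_iD)\setminus\acl(A'_{<i}D)$, and the application of exchange over the base $A'_{<i}D$ to force $a'_i\in\acl(A'_{<i}De)\subseteq\acl(A'_{<i}BC)$ are all exactly what is needed, and finite character handles limit stages. The trade-off: the paper's version is shorter but leans on an external result of Adler, while yours makes explicit precisely where the exchange property buys extension and keeps the argument self-contained modulo only Proposition 2.1 of [Ad3] for the basic axioms; it is essentially the pregeometry argument of [GL2], as you note.
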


\begin{proof} (1)  If $T$ is real rosy and $\thind=\ind[e]$, then $\ind[e]$ has symmetry, so $T$ has the exchange property by Lemma \ref{l-exchange-symmetry}.

Suppose $T$ has the exchange property.
By Corollary 4.3 of [Ad3] and Lemmas \ref{l-ind[e]-char} and \ref{l-e-strict}, $\ind[e]$ is a strict independence relation.
By Remark 4.1 in [Ad2], $\thind$ is the weakest strict independence relation for $T$, so $T$ is real rosy and $\ind[e]\Rightarrow\thind$.
We always have $\thind\Rightarrow \ind[M] \ $.
By Corollary \ref{c-ind[M]-implies-ind[e]} and Lemma \ref{l-exchange-symmetry}, $\ind[M] \ \Rightarrow \ind[e]$, so $\thind=\ind[M] =\ind[e].$

(2) By Lemmas \ref{l-ind[b]-easy} and \ref{l-ind[b]-implies-ind[e]} and (1), $\thind$ has small local character..

(3)  By Lemma \ref{l-exchange-symmetry} and the fact that when $T$ is stable, $\ind[d]$ has symmetry.
\end{proof}

We do not know whether $T$ is stable and has the exchange property implies that $\ind[d]=\ind[e]$.

\section{Countably Based and Countable Union Properties}

In this section we will introduce the notion of a countably based ternary relation.  The reason this notion is useful is because for
each ternary relation with monotonicity there is a unique countably based ternary relation that agrees with it on countable sets (Lemma \ref{l-cbased} (2)).
This will be important in Section \ref{s-pointwise}, where it allows us to introduce, for each ternary relation with monotonicity over the big model of  $T$,
a corresponding ``pointwise independence relation'' over the big model of the randomization $T^R$ (Definition \ref{d-measurable}).
We will also introduce another property, the countable union property.  The countably based property and the countable union property will
sometimes be useful in showing that ternary relations have finite character.

The notions and results in this section hold for both first order and continuous logic.  We will give the proofs only for continuous
logic; the proofs for first order logic are similar but simpler.
We will use $(\forall^c D)$ to mean ``for all countable $D$'', and similarly for $(\exists^c D)$.

\subsection{Countably Based Relations}

\begin{df}
We say that a ternary relation $\ind[I]$ over $\cu N$ is \emph{countably based} if for all $A, B, C$ we have
$$ A\ind[I]_C B \Leftrightarrow (\forall^c A'\subseteq A)(\forall^c B'\subseteq B)(\forall^c C'\subseteq C)(\exists^c D\in[C',C])A'\ind[I]_{D} B'.$$
\end{df}

Note that if $\ind[I]$ and $\ind[J]$ are countably based and agree on countable sets, then they are the same.

\begin{df}  We say that $\ind[I]$ has \emph{two-sided countable character} if $\ind[I]$ has countable character and
$$ [(\forall^c B_0\subseteq B)A\ind[I]_C B_0] \Rightarrow A\ind[I]_C B.$$
In other words,
$$ [(\forall^c A_0\subseteq A)(\forall^c B_0\subseteq B) A_0\ind[I]_C B_0] \Rightarrow A\ind[I]_C B.$$
\end{df}

\begin{rmk} \label{r-two-sided}
If $\ind[I]$ has symmetry and countable character, then $\ind[I]$ has two-sided countable character.
\end{rmk}

\begin{proof}  Suppose $(\forall^c B_0\subseteq B)A\ind[I]_C B_0$.  By symmetry, $(\forall^c B_0\subseteq B)B_0\ind[I]_C A$.
By countable character, $B\ind[I]_C A$.  Then by symmetry again, $A\ind[I]_C B$.
\end{proof}

\begin{lemma}  \label{l-cbased}

\noindent\begin{enumerate}
\item Suppose $\ind[I]$ and $\ind[J]$ are countably based.  If
$$A\ind[I]_C B \Rightarrow A\ind[J]_C B$$
holds for all countable $A,B,C$, then it holds for all small $A,B,C$.
\item  Suppose $\ind[I]$ has monotonicity.  There is a unique ternary relation $\ind[Ic]$ \  that is countably
based and agrees with $\ind[I]$ on countable sets.  Namely,
$$ A\ind[Ic]_C\,\, B \Leftrightarrow (\forall^c A'\subseteq A)(\forall^c B'\subseteq B)(\forall^c C'\subseteq C)(\exists^c D\in[C',C])A'\ind[I]_{D} B'.$$
\item $\ind[I]$ is countably based if and only if $\ind[I]$ has monotonicity and two-sided countable character, and
whenever $A$ and $B$ are countable, we have
$$A\ind[I]_C B \Leftrightarrow (\forall^c C'\subseteq C)(\exists^c D\in[C',C]) A\ind[I]_{D} B.$$
\end{enumerate}
\end{lemma}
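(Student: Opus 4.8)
The plan is to establish the four parts in order, each using its predecessors. Write $\Theta(A,B,C)$ for the formula $(\forall^c A'\subseteq A)(\forall^c B'\subseteq B)(\forall^c C'\subseteq C)(\exists^c D\in[C',C])\,A'\ind[I]_{D} B'$, so that $\ind[I]$ is countably based exactly when $A\ind[I]_C B\Leftrightarrow\Theta(A,B,C)$ for all small $A,B,C$. For (1) I would unwind the countably based condition for $\ind[I]$: from $A\ind[I]_C B$ and countable $A'\subseteq A$, $B'\subseteq B$, $C'\subseteq C$ it yields a countable $D\in[C',C]$ with $A'\ind[I]_D B'$; since $A',B',D$ are countable the hypothesis upgrades this to $A'\ind[J]_D B'$, which is precisely the witness needed for $\Theta(A,B,C)$ read for $\ind[J]$, and countable basedness of $\ind[J]$ then gives $A\ind[J]_C B$.

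For (2) I define $\ind[Ic]$ by $A\ind[Ic]_C B\Leftrightarrow\Theta(A,B,C)$. Agreement with $\ind[I]$ on countable sets is the one place monotonicity is used: the forward direction takes $A'=A$, $B'=B$, $C'=C$ (so $D=C$), and the backward direction takes $D=C$ and applies monotonicity to descend from $A\ind[I]_C B$ to $A'\ind[I]_C B'$. Countable basedness of $\ind[Ic]$ is then automatic, since in $\Theta$ written for $\ind[Ic]$ the innermost relation is evaluated only on the countable triple $(A',B',D)$, where $\ind[Ic]$ coincides with $\ind[I]$; uniqueness follows from the remark after the definition (equivalently, from (1) applied in both directions). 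For (3) the forward implication reads the three properties off $\Theta$: monotonicity by shrinking the outer quantifier ranges, the displayed $C$-condition for countable $A,B$ by specializing $A'=A,B'=B$ (and monotonicity for its converse), and two-sided countable character by assembling, from the countably based expansions of the various $A'\ind[I]_C B'$, a witness for $\Theta(A,B,C)$. Conversely, given monotonicity, two-sided countable character, and the $C$-condition, I verify $\Theta$: forward via monotonicity and the $C$-condition applied to each $A'\ind[I]_C B'$, and backward by first promoting $\Theta$ to $A'\ind[I]_C B'$ for all countable $A',B'$ (using the $C$-condition) and then invoking two-sided countable character.

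Part (4) is where I expect the real difficulty, and I would isolate first a \emph{chain lemma}: if $\ind[I]$ is countably based, $D_n\uparrow D$, $B_n\uparrow B$, and $A\ind[I]_{D_n}B_n$ for all $n$, then $A\ind[I]_D B$. This is checked directly against $\Theta(A,B,D)$ --- any countable $B'\subseteq B$ and $D'\subseteq D$ lie inside some $B_n$ and $D_n$, and monotonicity applied to $A\ind[I]_{D_n}B_n$ supplies the witness $D''=D_n$. With this in hand, invariance lifts immediately (an automorphism sending $(A,B,C)$ to $(A',B',C')$ carries witnesses for $\Theta(A,B,C)$ to witnesses for $\Theta(A',B',C')$, using invariance only on countable sets), and normality and symmetry lift by the same kind of quantifier-chasing as in (1), with no chain needed.

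The genuine obstacle is base monotonicity (and, similarly, transitivity), whose conclusions require enlarging the base, while base monotonicity at the countable level can only introduce base elements already present in the right-hand argument. To prove $A\ind[I]_E B\Rightarrow A\ind[I]_F B$ for $E\subseteq F\subseteq B$, I reduce by two-sided countable character to countable $A',B'$ and by the $C$-condition to producing, for each countable $F_0\subseteq F$, a countable $H\in[F_0,F]$ with $A'\ind[I]_H B'$. Setting $W=B'\cup F_0$, I build via repeated use of the countably based condition for $A'\ind[I]_E(W\cup E)$ an increasing chain $G_n\subseteq E$ with $A'\ind[I]_{G_{n+1}}(W\cup G_n)$; the chain lemma applied to $G^*=\bigcup_n G_n$ then yields $A'\ind[I]_{G^*}(W\cup G^*)$. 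Now $G^*\cup F_0\subseteq W\cup G^*$, so countable base monotonicity is finally legal and gives $A'\ind[I]_{G^*\cup F_0}(W\cup G^*)$, whence monotonicity delivers $A'\ind[I]_{G^*\cup F_0} B'$ with $H=G^*\cup F_0$. Transitivity is handled by the same fixed-point scheme, interleaving the two countable-level hypotheses. This device of forcing the base into the right-hand argument along a chain is, I expect, the crux of the whole lemma.
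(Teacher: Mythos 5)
Parts (1)--(3) of your proposal are correct and follow essentially the same route as the paper, and in part (4) your treatment of invariance, normality, and symmetry is also fine, since none of these properties forces the base outside the sets already in play. The gap is in your ``chain lemma,'' on which both base monotonicity and transitivity rest. Your justification --- that any countable $B'\subseteq\bigcup_n B_n$ and $D'\subseteq\bigcup_n D_n$ lie inside a single $B_n$ and $D_n$ --- is false: a countable subset of an increasing $\omega$-chain need not be contained in any term of the chain (take $B_n=\{0,\dots,n\}$ and $B'=\BN$). Worse, in the only case where you invoke the lemma all the sets involved are countable, and there countably-basedness gives no information beyond the relation itself; what you are really asserting is the \emph{countable union property} of Section 4.2, which the paper introduces as a separate hypothesis precisely because it does not follow from being countably based (otherwise Lemmas \ref{l-d-union} and \ref{l-ind[e]-union} and the hypotheses of Proposition \ref{p-union-pair} would be redundant). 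So this step is not a verifiable consequence of $\Theta$ but an extra assumption smuggled in.

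The repair is that no chain is needed, because in base monotonicity the new base satisfies $C\in[D,B]$, hence $C\subseteq B$, and countable pieces of $C$ can be absorbed into the right-hand side \emph{before} the countably based property is invoked. Given countable $A'\subseteq A$, $B'\subseteq B$, $C'\subseteq C$, set $B''=B'\cup C'$ (still a countable subset of $B$) and $D'=C'\cap D$; one application of the countably based property of $A\ind[I]_D B$ to the triple $(A',B'',D')$ produces a countable $E\in[D',D]$ with $A'\ind[I]_E B''$. Since $E\subseteq D\subseteq C$, the set $C''=E\cup C'$ is a countable member of $[C',C]$, and a single use of base monotonicity for countable sets followed by monotonicity gives $A'\ind[I]_{C''}B'$, which is exactly the required witness. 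Transitivity is handled by the same absorption idea with two successive applications of the countably based property, one for each hypothesis. You should replace your fixed-point construction by this one-step argument; as written, your proof of (4) for base monotonicity and transitivity does not go through.
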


\begin{proof}  (1) follows easily from the definition of countably based.

(2): Uniqueness is clear.  Let $\ind[J]$\, be the relation defined by the displayed formula and let $A,B,C$ be countable.
It is obvious that $\ind[J]$ is countably based, and that $A\ind[J]_C B$ implies $A\ind[I]_C B$.  Suppose $A\ind[I]_C B$
and $A'\subseteq A, B'\subseteq B, C'\subseteq C$.  By monotonicity for $\ind[I]$ we have $A'\ind[I]_C B'$.  But $C\in[C',C]$, so $A\ind[J]_C B$
as required.

%% Nov. 2018: Re-wrote proof below, and replaced (2) by (3) both times.
(3): Consider the following statements:
\begin{itemize}
\item[(a)] $A\ind[I]_C B$;
\item[(b)] $(\forall^c A'\subseteq A)(\forall^c B'\subseteq B) A'\ind[I]_C B'$;
\item[(c)] $(\forall^c A'\subseteq A)(\forall^c B'\subseteq B) A'\ind[Ic]_C \ B'$;
\item[(d)] $(\forall^c A'\subseteq A)(\forall^c B'\subseteq B)(\forall^c C'\subseteq C)(\exists^c D\in[C',C]) A'\ind[I]_D B'$;
\item[(e)] $ (\forall^c C'\subseteq C)(\exists^c D\in[C',C]) A\ind[I]_D B.$
\end{itemize}
By the definition of $\ind[Ic] \ $, (c) and (d) are equivalent.
Whenever $\ind[I]$ has  monotonicity and two-sided countable character, (a) and (b) are equivalent.
Whenever $\ind[I]$ has  monotonicity and two-sided countable character and  $A, B$
are countable, (d) and (e) are equivalent.

Suppose first that $\ind[I]$ is countably based, and that $A$ and $B$ are countable.  It is immediate that  $\ind[I]=\ind[Ic] \ \ $, and that $\ind[I]$ has monotonicity and
two-sided countable character.  Therefore (b) and (c) are equivalent,
so (a) is equivalent to (e) as required.

Now suppose that $\ind[I]$ has monotonicity and two-sided countable character, and (a) is equivalent to (e) whenever
$A, B$ are countable.   Then the parts of (b) and (d) after the quantifiers are equivalent, so (b), and (d) are equivalent.
Therefore (a) is equivalent to (d), so $\ind[I]$ is countably based.
\end{proof}

%% Simplified next proposition by assuming countably based.
\begin{prop}  \label{p-preserve}
Let $\ind[I]$ be a countably based ternary relation.
If $\ind[I]$ has any of invariance, transitivity, normality, symmetry, or anti-reflexivity for all countable sets,
then $\ind[I]$ \  has the same property for all small sets.  If $\ind[I]$ has normality, symmetry, and base monotonicity for all countable sets, then $\ind[I] \ $ has base monotonicity for all small sets.
\end{prop}

\begin{proof}  Invariance and symmetry are clear.

Transitivity:  Assume $C\in[D,B], B\ind[I]_C \ A$, and $C\ind[I]_D \ A$.  Let $A_0\subseteq A, B_0\subseteq B$, ${C_0}\subseteq C$,
$D_0\subseteq D$  be countable.  There is a countable $C_1\in[C_0,C]$ such that $B_0\ind[I]_{C_1} A_0$, and a countable $D_1\in [D_0,D]$
such that $C_1\ind[I]_{D_1} A_0$.  By transitivity for countable sets, $B_0\ind[I]_{D_1} A_0$.  This shows that $B\ind[I]_D \ A$.

Normality:  Assume $A\ind[I]_C \ B$.  Let $E_0\subseteq AC, B_0\subseteq B$, $C_0\subseteq C$ be countable.  Let $A_0=E_0\cap A, C_1=C_0\cup(E_0\cap C$).
Then for some countable $C_2\in[C_1,C]$ we have $A_0\ind[I]_{C_2} B_0$.  By normality for countable sets,  $A_0C_2\ind[I]_{C_2} B_0$.
We have
$$E_0=E_0\cap AC=(E_0\cap A)(E_0\cap C)\subseteq A_0C_1\subseteq A_0C_2.$$
Then by monotonicity of $\ind[I]$, $E_0\ind[I]_{C_2} B_0$.  Thus  $AC\ind[I]_C \ B$.

Anti-reflexivity:  Suppose $a\ind[I]_C \ a$.  Let $C_0\subseteq C$ be countable.  For some countable $C_1\in[C_0,C]$ we have
$ a\ind[I]_{C_1}  a$.  Then $ a\in\acl(C_1)$ by the anti-reflexivity of $\ind[I]$, so $ a\in\acl(C)$.

Base monotonicity:  Suppose $C\in[D,B]$ and $A\ind[I]_D \ B$.  We will prove that $A\ind[I]_C \ B$. Let $A_0\subseteq A, B_0\subseteq B$, $C_0\subseteq C$ be countable.
 Let $D_0=C_0\cap D$. We have $B_0C_0\subseteq B$, so there exists a countable $D_1\in[D_0,D]$ such that $A_0\ind[I]_{D_1} B_0C_0$.
By symmetry and normality for countable sets, $A_0\ind[I]_{D_1} B_0 C_0D_1$.  Let $C_1=C_0 D_1$ and $B_1=B_0C_0D_1.$  Then
$A_0\ind[I]_{D_1} B_1$ and $C_1\in[D_1,B_1]$. By base monotonicity for countable sets, $A_0\ind[I]_{C_1} B_1$. By monotonicity,
$A_0\ind[I]_{C_1} B_0$. Therefore $A\ind[I]_C \ B$.
\end{proof}

\begin{prop}  \label{p-countably-local-implies}
Suppose $\ind[I]$ has  monotonicity, base monotonicity, transitivity, symmetry, and countably local character.  Then $\ind[I]\Rightarrow\ind[Ic] \ $.
\end{prop}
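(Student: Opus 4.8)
The plan is to reduce everything to countable sets via Lemma \ref{l-cbased}(2). Since $\ind[I]$ has monotonicity, that lemma identifies $\ind[Ic]$ with the relation given by its displayed formula, so proving $\ind[I]\Rightarrow\ind[Ic]$ amounts to the following concrete task. Fix small $A,B,C$ with $A\ind[I]_C B$, and fix countable $A'\subseteq A$, $B'\subseteq B$, $C'\subseteq C$; I must produce a countable $D\in[C',C]$ with $A'\ind[I]_D B'$. By monotonicity I immediately have $A'\ind[I]_C B'$, so the entire difficulty is to replace the (possibly large) base $C$ by a countable base $D$ lying between $C'$ and $C$.

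First I would localize the base. Applying countably local character to the countable set $A'B'$ and the small set $C$ produces a countable $C_0\subseteq C$ with $A'B'\ind[I]_{C_0}C$, and I set $D:=C_0\cup C'$, a countable member of $[C',C]$. Base monotonicity (note $C_0\subseteq D\subseteq C$) upgrades this to $A'B'\ind[I]_D C$, whence $A'\ind[I]_D C$ and $B'\ind[I]_D C$ by monotonicity. I will also want the elementary fact $A'\ind[I]_C C$, which follows exactly as in Remark \ref{r-fe-vs-ext}(ii), using countably local character in place of local character since $A'$ is countable (localize $A'$ over $C$ to a countable base and raise it back up to $C$ by base monotonicity).

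The heart of the matter is to descend the base of $A'\ind[I]_C B'$ from $C$ to $D$, for which I would invoke the Pairs Lemma (Remark \ref{r-pairs}), whose hypotheses are precisely monotonicity, base monotonicity, transitivity, and symmetry. Taking the left pair to be $A'C$, the base to be $D$, and the right side to be $B'$, and using $D\subseteq C$ (so that $D\cup C=C$), the Pairs Lemma reads $A'C\ind[I]_D B' \Leftrightarrow A'\ind[I]_C(B'C)\wedge C\ind[I]_D B'$. The second conjunct $C\ind[I]_D B'$ is already in hand from the previous step via symmetry, and the left-hand side yields the desired $A'\ind[I]_D B'$ by monotonicity. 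Thus everything reduces to the first conjunct $A'\ind[I]_C(B'C)$, i.e.\ to absorbing the base $C$ into the right-hand argument of $A'\ind[I]_C B'$. I expect this absorption step to be the main obstacle: transitivity (and hence the Pairs Lemma) only lowers a base through an intermediate base that lies inside one of the two sides, so the large base $C$ must first be moved into an argument. Concretely, $A'\ind[I]_C(B'C)$ should follow from $A'\ind[I]_C B'$ together with $A'\ind[I]_C C$ by enlarging the right argument by base parameters and then applying symmetry --- this is the normality step --- and the delicate point is to justify that enlargement under the stated hypotheses.
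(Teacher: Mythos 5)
Your reduction to countable sets, the localization of the base, and the instantiation of the Pairs Lemma are all sound, but the proof is not finished, and the step you leave open is a genuine obligation rather than a formality. The missing conjunct $A'\ind[I]_C (B'C)$ is, after one application of symmetry, exactly normality on the left ($B'\ind[I]_C A' \Rightarrow B'C\ind[I]_C A'$), and normality is deliberately absent from the hypotheses of the proposition, so it cannot be waved through as an ``enlargement by base parameters.'' Your route can in fact be closed with the tools you already have in hand: since $A'B'$ is countable, countably local character plus base monotonicity give $A'B'\ind[I]_{CB'} CB'$ (the same argument as Remark \ref{r-fe-vs-ext}(ii), as you note), hence $C\ind[I]_{CB'} A'B'$ by monotonicity and symmetry; feeding this together with $B'\ind[I]_C A'$ into the right-to-left direction of the Pairs Lemma, now with left pair $CB'$, base $C$, and right side $A'$, yields $CB'\ind[I]_C A'$, i.e.\ $A'\ind[I]_C B'C$ by symmetry. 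As written, however, the proposal identifies this as ``the delicate point'' and stops, so the argument is incomplete precisely where it needed to be carried out.

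For comparison, the paper's proof is shorter and bypasses the Pairs Lemma entirely: it localizes only $A'$ over $C$ (countably local character gives a countable $C_1\subseteq C$ with $A'\ind[I]_{C_1} C$), sets $D=C_1\cup C'$, upgrades to $A'\ind[I]_D C$ by base monotonicity, and then, keeping $A'$ on the right of every instance, applies symmetry to get $B'\ind[I]_C A'$ and $C\ind[I]_D A'$, transitivity to conclude $B'\ind[I]_D A'$, and symmetry once more. That single transitivity step does the work of your entire Pairs-Lemma manoeuvre. Note that it invokes transitivity with intermediate base $C$ not contained in the left-hand set $B'$, which is exactly the side condition in axiom (4) that pushed you toward the Pairs Lemma in the first place; so your instinct about where the difficulty sits was reasonable, but having chosen the detour you must actually discharge the absorption step rather than flag it.
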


\begin{proof}  Suppose  $A\ind[I]_C B$.  Let $A'\subseteq A, B'\subseteq B, C'\subseteq C$ be countable.  By monotonicity, $A'\ind[I]_C B'$.
Countably local character insures that
there is a countable $C_1\subseteq C$ such that $A'\ind[I]_{C_1} C$.  Let $D=C_1 C'$.  Then $D$ is countable and $D\in[C_1,C]$.
By base monotonicity, $A'\ind[I]_{D} C$.  By symmetry, $B'\ind[I]_C A'$ and $C\ind[I]_{D} A'$. By transitivity, $B'\ind[I]_D A'$, and by
symmetry again, $A'\ind[I]_D B'$.  Moreover, $D\in[C',C]$.  This proves that $A\ind[Ic]_C \ B$.
\end{proof}

\begin{cor} \label{c-preserve}
Let $\ind[I]$ be a countable independence relation.
\begin{enumerate}
\item If $\ind[I]$ has countably local character, then $\ind[I]\Rightarrow\ind[Ic] \ \ $.
\item If $\ind[I]\Rightarrow\ind[Ic] \ \ $ then $\ind[Ic] \ \ $ is a countable independence relation.
\end{enumerate}
\end{cor}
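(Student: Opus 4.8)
The plan is to handle the two parts separately, relying almost entirely on results already proved in this section together with the Adler remarks recorded in Remark~\ref{r-fe-vs-ext}. Throughout I use that a countable independence relation has, by definition, monotonicity, base monotonicity, transitivity, normality, extension, countable character, and local character, and that by Remark~\ref{r-fe-vs-ext}(iv) it also has symmetry.

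Part (1) is a one-line application of Proposition~\ref{p-countably-local-implies}: its hypotheses are monotonicity, base monotonicity, transitivity, symmetry, and countably local character. The first four hold because $\ind[I]$ is a countable independence relation, and the fifth is exactly what we are assuming; hence $\ind[I]\Rightarrow\ind[Ic]$.

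For part (2) I would verify each of the eight axioms for $\ind[Ic]$, with countable character in place of finite character. Since $\ind[Ic]$ is countably based by Lemma~\ref{l-cbased}(2), Lemma~\ref{l-cbased}(3) gives it monotonicity and two-sided countable character, in particular countable character. Because $\ind[I]$ has invariance, base monotonicity, transitivity, normality, and symmetry for all small sets and hence for all countable sets, Proposition~\ref{p-preserve} transfers each of these to $\ind[Ic]$. This leaves local character and extension, and here the hypothesis $\ind[I]\Rightarrow\ind[Ic]$ is exactly what is needed. Local character is immediate: the bound $\kappa(A)$ witnessing local character of $\ind[I]$ works verbatim for $\ind[Ic]$, since any $C\subseteq B$ with $|C|<\kappa(A)$ and $A\ind[I]_C B$ also satisfies $A\ind[Ic]_C B$.

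Extension takes a little more. I would first obtain full existence for $\ind[I]$: by Remark~\ref{r-fe-vs-ext}(ii), base monotonicity and local character give $A\ind[I]_C C$ for all small $A,C$, and then Remark~\ref{r-fe-vs-ext}(iii), applied with monotonicity and extension, yields full existence of $\ind[I]$. The hypothesis $\ind[I]\Rightarrow\ind[Ic]$ transfers this cleanly: given small $A,B,C$, pick $A'\equiv_C A$ with $A'\ind[I]_C B$, so that $A'\ind[Ic]_C B$, giving full existence for $\ind[Ic]$. Finally, $\ind[Ic]$ now has invariance, monotonicity, transitivity, normality, symmetry, and full existence, so Remark~\ref{r-fe-vs-ext}(i) delivers extension. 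With all eight axioms in hand, $\ind[Ic]$ is a countable independence relation. The only nonroutine point is this last chain: extension is not visibly preserved under passage to the countably based relation, and must instead be recovered by routing through full existence, so I would take care that the hypotheses of Remark~\ref{r-fe-vs-ext}(i)--(iii) really are all available at each stage.
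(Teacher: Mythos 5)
Your proposal is correct and follows essentially the same route as the paper: part (1) via Proposition~\ref{p-countably-local-implies} plus symmetry from Remark~\ref{r-fe-vs-ext}(iv), and part (2) by transferring the basic axioms via Lemma~\ref{l-cbased} and Proposition~\ref{p-preserve}, pushing local character and full existence through the implication $\ind[I]\Rightarrow\ind[Ic]\ $, and recovering extension from full existence and symmetry via Remark~\ref{r-fe-vs-ext}. The only difference is that you spell out explicitly how full existence of $\ind[I]$ is first derived from Remark~\ref{r-fe-vs-ext}(ii)--(iii), a step the paper leaves implicit behind its citation of Remark~\ref{r-weaker}.
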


\begin{proof}  (1): $\ind[I]$ has symmetry by Remarks \ref{r-fe-vs-ext} (4).  The result follows from symmetry and Proposition \ref{p-countably-local-implies}.

(2): $\ind[Ic]$ \ \ has monotonicity and countable character by Lemma \ref{l-cbased} (3).  By Remark \ref{r-weaker} and Proposition \ref{p-preserve},
$\ind[Ic]$ \ \ satisfies full existence, symmetry, and all the axioms except perhaps extension.  By Remarks \ref{r-fe-vs-ext} (1), extension follows
from full existence, symmetry, and the other axioms, so $\ind[Ic]$ \ \ satisfies extension as well.
\end{proof}

%% This is an upgrade of the previous Prop. 4.1.8, eliminating the exchange property assumption.
\begin{prop}  \label{p-ind[e]-countable-based}
On any first order theory,  the relation $\ind[e]$  is countably based.
\end{prop}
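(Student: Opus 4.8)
The plan is to reduce the statement to a direct verification about exchange independence. Since $T$ has the exchange property, Theorem \ref{t-indep-FO}(i) gives $\thind=\ind[e]$, so it suffices to prove that $\ind[e]$ is countably based. To do this I would apply the characterization in Lemma \ref{l-cbased}(3): a relation is countably based precisely when it has monotonicity, two-sided countable character, and, for all countable $A$ and $B$, satisfies
$$A\ind[e]_C B \Leftrightarrow (\forall^c C'\subseteq C)(\exists^c D\in[C',C])\, A\ind[e]_{D} B.$$

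The first two hypotheses are inherited from what is already known about $\ind[e]$. Monotonicity is part of the basic axioms recorded in the discussion following Lemma \ref{l-ind[e]-char}. For two-sided countable character, I would note that $\ind[e]$ has (strong) finite character, hence countable character, and that it has symmetry by Lemma \ref{l-exchange-symmetry} (this is exactly where the exchange property enters); then Remark \ref{r-two-sided} upgrades symmetry plus countable character to two-sided countable character.

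The heart of the argument is the displayed equivalence, which I would prove directly from the definition of $\ind[e]$ together with the finite character of algebraic closure in the first order theory $T$. The direction $(\Leftarrow)$ is routine: given $e\in A\cap\acl(\bar a BC)$, finite character yields a finite $\bar c\subseteq C$ with $e\in\acl(\bar a\bar b\bar c)$ for some finite $\bar b\subseteq B$; choosing $D\in[C',C]$ from the hypothesis with $\bar c\subseteq D$ and applying $A\ind[e]_D B$ places $e$ into $\acl(\bar a D)\subseteq\acl(\bar a C)$, so $A\ind[e]_C B$.

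The substantive direction is $(\Rightarrow)$, and this is the step I expect to be the main obstacle, because base monotonicity only lets one enlarge the base toward $B$, so there is no formal way to shrink $C$ down to a countable base. Instead I would build $D$ by hand: for each of the countably many pairs $(\bar a,e)$ with $\bar a\in A^{<\BN}$ and $e\in A$ such that $e\in\acl(\bar a BC)$, the hypothesis $A\ind[e]_C B$ forces $e\in\acl(\bar a C)$, so finite character provides a finite witness $\bar c_{\bar a,e}\subseteq C$ with $e\in\acl(\bar a\,\bar c_{\bar a,e})$. Taking $D$ to be $C'$ together with the union of all these finite witnesses produces a countable set in $[C',C]$, and the construction guarantees that any $e\in A\cap\acl(\bar a BD)$ (note $\acl(\bar a BD)\subseteq\acl(\bar a BC)$) already lies in $\acl(\bar a D)$, i.e.\ $A\ind[e]_D B$. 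Countability of $D$ relies precisely on the countability of $A$, which is why the equivalence is asserted only for countable $A$ and $B$. With the three conditions of Lemma \ref{l-cbased}(3) in hand, $\ind[e]$, and hence $\thind$, is countably based.
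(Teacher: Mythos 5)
Your proposal is correct and follows essentially the same route as the paper: reduce to $\ind[e]$ via the exchange property, verify the criterion of Lemma \ref{l-cbased}(3) using symmetry and countable character for the two-sided part, and construct the countable base $D$ by collecting, for the countably many pairs $(\bar a,e)$ with $e\in A\cap\acl(\bar a BC)$, finite witnesses in $C$ for $e\in\acl(\bar a C)$. The paper's proof is just a terser version of this same construction (it even leaves the routine converse direction implicit), so there is nothing substantive to add.
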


\begin{proof}
We have already observed that $\ind[e]$ satisfies the basic axioms and has finite character.  It is easily seen that $\ind[e]$
has two sided countable (and even finite) character.

Assume first that $A\ind[e]_C B$ and $C_0$ is a countable subset of $C$.
Since $A$ is countable, there is a countable set $C_1\in[C_0,C]$ such that for any finite $\bar a\in A^{<\BN}$ we have
$A\cap \acl(\bar a C)\subseteq \acl(\bar a C_1)$.
Then for any finite $\bar a\in A^{<\BN}$ we have
$$A\cap\acl(\bar a C_1 B)\subseteq A\cap\acl(\bar a CB)\subseteq\acl(\bar a C)\subseteq\acl(\bar a C_1),$$
so $A\ind[e]_{C_1} B$.

For the other direction, since $A$ is countable, there
is a countable set $C_0\subseteq C$ such that $A\cap \acl(\bar a CB)\subseteq \acl(\bar a C_0 B)$ for every finite $\bar a\in A^{<\BN}$.
Now assume that there is  a countable set $C_1\in[C_0,C]$ such that $A\ind[e]_{C_1} B$.
 Then for every finite $\bar a\in A^{<\BN}$,
$$A\cap\acl(\bar aCB)\subseteq A\cap\acl(\bar a C_0 B)\subseteq A\cap\acl(\bar a C_1 B)\subseteq\acl(\bar a C_1)\subseteq \acl(\bar a C),$$
so $A\ind[e]_C B$.
\end{proof}

\begin{prop}  \label{p-thorn-cbased} In first order or continuous logic,
if there exists a strict countably based independence relation $\ind[I]$  with countably local character, then $\thind$ is countably based.
\end{prop}

\begin{proof}  By Result \ref{f-weakest}, $\thind$ is the weakest strict countable independence relation.
Then $\ind[I] \Rightarrow \thind$, so $\thind$ has countably local character.
By Corollary \ref{c-preserve} (1), $\thind\Rightarrow\ind[\th c] \ \ $. By Corollary \ref{c-preserve} (2),  $\ind[\th c]$ \ \ is a strict countable independence relation on models of $T$ that is
weaker than $\thind$.  Therefore $\ind[\th c] \ =\thind$, so $\thind$ is countably based.
\end{proof}

%%  New proposition.
\begin{prop} \label{p-d-cbased}  In first order or continuous logic, if the dividing independence relation $\ind[d]$
is an independence relation with countably local character,  then $\ind[d]$ is countably based.
\end{prop}

\begin{proof}  We give the proof for continuous logic. Work in the big model $\cu N$ of $T$, and fix countable $A, B$ and small $C$ in $\cu N$.
By Lemma \ref{l-cbased} (3), it is enough to check that
\begin{equation}  \label{e-d-cbased}
A\ind[d]_CB \Leftrightarrow (\forall^c C'\subseteq C)(\exists^c D\in [C',C])A\ind[d]_DB.
\end{equation}

$\Rightarrow$:  Suppose that $A\ind[d]_CB$.  Since $\ind[d]$ is an independence relation with countably local character,
we have that $\ind[d]\Rightarrow \ind[dc] \ \ $ by Corollary \ref{c-preserve}, whence we get the forward implication of (\ref{e-d-cbased}).

$\Leftarrow$:  Suppose that $A\nind[d]_CB$.  Then for some $\vec a\in A^{<\BN}$ and some  formula $\Phi(\vec x,B,C)$,
$\cu N\models \Phi(\vec a,B,C)=0$ and $\Phi(\vec x,B,C)$ divides over $C$.  Take a countable (even finite) $C'\subseteq C$ such
that $\Phi(\vec x,B,C)=\Phi(\vec x,B ,C')$.   Then for any countable $D\in [C',C]$, $\Phi(\vec x,B,C)$ divides over $D$,
so $A\nind[d]_D B$ and the right hand side of (\ref{e-d-cbased}) fails.
\end{proof}

In the paper [Be3],  Ben Yaacov defined simple continuous theories and showed that they satisfy the hypotheses of
Proposition \ref{p-d-cbased}.  Thus on models of a simple theory, $\ind[d]$ is countably based.

It can  be also shown that for any first order or continuous theory, $\ind[a]$ and $\ind[M]\ $ are countably based.
These results will not be needed in this paper, and will be left to the reader.

\subsection{Countable Union Property}

\begin{df} A ternary relation $\ind[I]$ has the \emph{countable union property} if whenever
$A, B, C$ are countable, $C=\bigcup_n C_n$, and $C_n\subseteq C_{n+1}$ and $A\ind[I]_{C_n} B$
for each $n$, we have $A\ind[I]_C B$.
\end{df}

\begin{rmk}  \label{r-union-cbased} If $\ind[I]$ has monotonicity, then $\ind[I]$ has the countable union property if and only $\ind[Ic] \ $ has the countable union property,
because $\ind[I]$and $\ind[Ic] \ $ agree on countable sets.
\end{rmk}

Given two ternary relations $\ind[I]$ and $\ind[J]$ over $\cu N$, $\ind[I]\wedge\ind[J]$ will denote the relation $\ind[K] \ $ such that
$$A\ind[K]_C \ B\Leftrightarrow A\ind[I]_C B\wedge A\ind[J]_C B.$$

\begin{prop}  \label{p-union-pair}  Suppose $\ind[I]$ and $\ind[J]$ are both countably based and have the countable union property.
Then the relation $\ind[I]\wedge\ind[J]$ is also countably based.
\end{prop}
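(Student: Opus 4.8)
The plan is to write $\ind[K]=\ind[I]\wedge\ind[J]$ and verify the defining equivalence for ``countably based'' directly for $\ind[K]$. First I note that $\ind[K]$ inherits monotonicity from $\ind[I]$ and $\ind[J]$ (both of which have monotonicity by Lemma \ref{l-cbased}(3)), since a conjunction of monotone relations is monotone. The right-to-left implication of the countably based equivalence is then routine: if the displayed condition holds for $\ind[K]$, then for each countable triple $A',B',C'$ the witnessing $D\in[C',C]$ with $A'\ind[K]_D B'$ is simultaneously a witness for $\ind[I]$ and for $\ind[J]$, so the countably based property of each of $\ind[I]$ and $\ind[J]$ yields $A\ind[I]_C B$ and $A\ind[J]_C B$, i.e.\ $A\ind[K]_C B$.

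The real content is the left-to-right implication. Suppose $A\ind[K]_C B$ and fix countable $A'\subseteq A$, $B'\subseteq B$, $C'\subseteq C$. By monotonicity I have $A'\ind[I]_C B'$ and $A'\ind[J]_C B'$. What I must produce is a \emph{single} countable $D\in[C',C]$ that serves as a base for independence in \emph{both} relations at once; applying the countably based property to $\ind[I]$ and to $\ind[J]$ separately only supplies two possibly different bases, and there is no base monotonicity available to merge them. This is precisely the obstacle the countable union property is designed to overcome, and it is the step I expect to be the crux of the argument.

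To build $D$, I would construct an increasing chain $C'=D_0\subseteq D_1\subseteq D_2\subseteq\cdots$ of countable subsets of $C$ by alternation. Using the characterization in Lemma \ref{l-cbased}(3) for the countable sets $A',B'$, namely $A'\ind[I]_C B'\Leftrightarrow(\forall^c C''\subseteq C)(\exists^c D\in[C'',C])A'\ind[I]_D B'$, I choose, given $D_{2n}$, a countable $D_{2n+1}\in[D_{2n},C]$ with $A'\ind[I]_{D_{2n+1}} B'$; and given $D_{2n+1}$, using the same characterization for $\ind[J]$, a countable $D_{2n+2}\in[D_{2n+1},C]$ with $A'\ind[J]_{D_{2n+2}} B'$. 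Setting $D=\bigcup_n D_n$, this is a countable member of $[C',C]$. Now the odd-indexed bases $D_1\subseteq D_3\subseteq\cdots$ form an increasing chain with union $D$ along which $\ind[I]$ holds, so the countable union property for $\ind[I]$ gives $A'\ind[I]_D B'$; symmetrically the even-indexed chain $D_2\subseteq D_4\subseteq\cdots$ together with the countable union property for $\ind[J]$ gives $A'\ind[J]_D B'$. Hence $A'\ind[K]_D B'$, which establishes the left-to-right direction and completes the verification that $\ind[K]$ is countably based.
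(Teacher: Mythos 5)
Your proof is correct and follows essentially the same route as the paper: both reduce to producing a single countable base $D\in[C',C]$ by building an alternating increasing chain of countable witnesses for $\ind[I]$ and $\ind[J]$, taking the union, and applying the countable union property of each relation along its cofinal subchain. The only cosmetic difference is that you verify the defining biconditional for $\ind[K]$ directly, while the paper first invokes Lemma \ref{l-cbased}(3) to reduce to countable $A,B$; the substance is identical.
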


\begin{proof}   Let $A, B$ be countable and let $\ind[K] \ =\ind[I]\wedge\ind[J]$.  By  Lemma \ref{l-cbased} (3), it is enough to show that
$$ A\ind[K]_C \ B \Leftrightarrow (\forall^c C'\subseteq C)(\exists^c D\in[C',C]) A\ind[K]_D \ B.$$
The implication from right to left is trivial. For the other direction, assume $A\ind[K]_C B$ and let $C'\subseteq C$ be countable.
Since both $\ind[I]$ and $\ind[J]$ are countably based, there is a  sequence $\< D_n\>_{n\in\BN}$ of countable sets such that
$D_n\subseteq D_{n+1}$ and $D_n\in[C',C]$ for each $n\in\BN$, $A\ind[I]_{D_n} B$ for each even $n$, and  $A\ind[J]_{D_n} B$ for each odd $n$.
Let $D=\bigcup_n D_n$. Then $D\in[C',C]$ and $D$ is countable.  Since both $\ind[I]$ and $\ind[J]$ have the countable union property, we have $A\ind[K]_D B$,
as required.
\end{proof}

\begin{prop} \label{p-preserve-finitechar}  If $\ind[I]$ has monotonicity,
finite character, and the countable union property, then $\ind[Ic] \ $ has finite character.
\end{prop}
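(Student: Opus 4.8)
The plan is to unwind the definition of $\ind[Ic] \ $ given in Lemma \ref{l-cbased}(2) and reduce the claim to a statement about a single triple of countable sets. To prove finite character for $\ind[Ic] \ $, assume $A_0 \ind[Ic]_C \ B$ for every finite $A_0 \subseteq A$; I want $A \ind[Ic]_C \ B$. By the displayed formula for $\ind[Ic] \ $, it suffices to fix arbitrary countable $A' \subseteq A$, $B' \subseteq B$, and $C' \subseteq C$, and produce a single countable $D \in [C', C]$ with $A' \ind[I]_D B'$. Thus the whole problem localizes to countable sets, where the hypotheses (finite character and the countable union property of $\ind[I]$) have teeth.

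To build $D$, first I would enumerate $A'$ as an increasing union of finite sets $A'_1 \subseteq A'_2 \subseteq \cdots$ with $\bigcup_n A'_n = A'$. Each $A'_n$ is a finite subset of $A$, so by hypothesis $A'_n \ind[Ic]_C \ B$. I then construct an increasing chain of countable sets $D_n \in [C', C]$ recursively: applying the definition of $\ind[Ic] \ $ to $A'_1 \ind[Ic]_C \ B$ with the countable data $(A'_1, B', C')$ yields a countable $D_1 \in [C', C]$ with $A'_1 \ind[I]_{D_1} B'$; and applying the definition of $\ind[Ic] \ $ to $A'_{n+1} \ind[Ic]_C \ B$ with the countable data $(A'_{n+1}, B', D_n)$ yields a countable $D_{n+1} \in [D_n, C]$ with $A'_{n+1} \ind[I]_{D_{n+1}} B'$. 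Feeding $D_n$ back in as the ``$C'$'' parameter is the device that forces the bases to increase while staying inside $[C', C]$. Set $D = \bigcup_n D_n$, which is countable and lies in $[C', C]$.

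It remains to promote the individual relations $A'_n \ind[I]_{D_n} B'$ to $A' \ind[I]_D B'$, and this is where the main obstacle lies: base monotonicity is \emph{not} among the hypotheses, so one cannot simply enlarge the base $D_n$ to $D$. The countable union property is precisely the replacement tool. Fix $n$. For every $m \ge n$ we have $A'_m \ind[I]_{D_m} B'$, hence $A'_n \ind[I]_{D_m} B'$ by monotonicity (in the left argument, since $A'_n \subseteq A'_m$). The sets $D_n \subseteq D_{n+1} \subseteq \cdots$ form an increasing chain of countable sets with union $D$, all data being countable, so the countable union property gives $A'_n \ind[I]_D B'$. Since this holds for every $n$ and every finite subset of $A'$ sits inside some $A'_n$, monotonicity and the finite character of $\ind[I]$ together yield $A' \ind[I]_D B'$. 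As $A', B', C'$ were arbitrary countable subsets, this establishes $A \ind[Ic]_C \ B$, completing the proof. The delicate point to get right is the indexing: the recursion must secure independence of each fixed $A'_n$ over \emph{cofinally many} bases $D_m$, not merely over $D_n$, so that the countable union property is genuinely applicable.
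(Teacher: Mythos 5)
Your proof is correct and follows essentially the same route as the paper's: decompose the countable set $A'$ into an increasing union of finite sets, recursively build an increasing chain of countable bases $D_n\in[C',C]$ witnessing $A'_n\ind[I]_{D_n}B'$, use monotonicity to get each $A'_n$ independent over the cofinal tail $D_m$ ($m\ge n$), apply the countable union property to pass to $D=\bigcup_n D_n$, and finish with monotonicity plus finite character of $\ind[I]$. The only difference is that you spell out the recursion (feeding $D_n$ back as the ``$C'$'' parameter) more explicitly than the paper, which simply asserts the chain exists by induction.
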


\begin{proof}
Suppose $A'\ind[Ic]_C \ B$ for every finite $A'\subseteq A$.  Let
$A_0\subseteq A, B_0\subseteq B$, ${C_0}\subseteq C$ be countable.  Let $A_0=\bigcup_{n}E_n$ where
$E_n$ is finite and $E_n\subseteq E_{n+1}$ for each $n$.  By induction on $n$, there is a sequence
of countable sets $\<D_n\>_{n\in\BN}$ such that for each $n$, $D_n\in[C_0,C]$, $D_n\subseteq D_{n+1}$, and $E_n\ind[I]_{D_n} B_0$.
By monotonicity, $E_n\ind[I]_{D_k} B_0$ whenever $n\le k$.  Let $D=\bigcup_n D_n$.  Then $D$ is countable and $D\in[C_0,C]$.
By the countable union property, $E_n\ind[I]_D B_0$ for each $n$.  Hence by monotonicity and finite character for $\ind[I]$,
we have $A_0\ind[I]_D B_0$.  This shows that $A\ind[Ic]_C \ B$, so $\ind[Ic] \ $ has finite character.
\end{proof}

\begin{lemma}  \label{l-d-union}
In first order or continuous logic, the dividing independence relation $\ind[d]$ has the countable union property.
\end{lemma}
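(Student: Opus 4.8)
The plan is to prove the contrapositive. Fix countable $A,B,C$ with $C=\bigcup_n C_n$ and $C_n\subseteq C_{n+1}$. I will show that $A\nind[d]_C B$ implies $A\nind[d]_{C_n} B$ for some $n$. This suffices: if $A\ind[d]_{C_n} B$ held for every $n$ while $A\nind[d]_C B$, the implication would produce an $n$ with $A\nind[d]_{C_n} B$, a contradiction, and hence $A\ind[d]_C B$ as required by the countable union property.

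So suppose $A\nind[d]_C B$. By definition there is a tuple $\bar a\in A^{<\BN}$ and a continuous formula $\Phi(\bar x,B,C)$ with $\Phi(\bar a,B,C)=0$ such that $\Phi(\bar x,B,C)$ divides over $C$. The first key step is to exploit that a continuous formula is a finitary syntactic object, so $\Phi$ mentions only finitely many parameters from $C$, say $\bar c\in C^{<\BN}$. Because $C=\bigcup_n C_n$ is an increasing union and $\bar c$ is finite, there is an $n$ with $\bar c\in C_n^{<\BN}$. For this $n$, the formula $\Phi(\bar x,B,C)$ is literally the same formula as $\Phi(\bar x,B,C_n)$: all of its parameters lie in $B\cup C_n$, and $\Phi(\bar a,B,C_n)=0$.

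It then remains to check that $\Phi$ divides over $C_n$, which is the one real point. Let $\<B^i\>_{i\in\BN}$ be a $C$-indiscernible sequence witnessing that $\Phi$ divides over $C$, so $B^0\equiv_C B$ and $\{\Phi(\bar x,B^i,C)=0\mid i\in\BN\}$ is unsatisfiable. Since $C_n\subseteq C$, a sequence indiscernible over $C$ is a fortiori indiscernible over $C_n$, and $B^0\equiv_C B$ gives $B^0\equiv_{C_n} B$. Moreover, since all the $C$-parameters of $\Phi$ lie in $C_n$, the conditions $\Phi(\bar x,B^i,C)=0$ and $\Phi(\bar x,B^i,C_n)=0$ coincide, so $\{\Phi(\bar x,B^i,C_n)=0\mid i\in\BN\}$ is the same unsatisfiable set. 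Hence $\<B^i\>_{i\in\BN}$ also witnesses that $\Phi$ divides over $C_n$. Together with $\Phi(\bar a,B,C_n)=0$ this yields $A\nind[d]_{C_n} B$, completing the argument.

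The main obstacle, such as it is, is recognizing that dividing is monotone in the base set in exactly the direction needed: passing from $C$ to a subset $C_n$ relaxes both the indiscernibility requirement and the $\equiv$-requirement, so it makes dividing easier rather than harder. Combined with the finiteness of the parameter set of $\Phi$, which lets me absorb all of its $C$-parameters into a single $C_n$, everything else is bookkeeping with the definition of dividing, and the same witnessing sequence transfers verbatim.
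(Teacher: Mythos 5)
Your proof is correct and follows essentially the same route as the paper's: both arguments reduce to the observation that $\Phi$ mentions only finitely many parameters from $C$, hence lies over some $C_n$, and that a witness to dividing over $C$ is a fortiori a witness to dividing over the subset $C_n$. You merely spell out in more detail the base-monotonicity of dividing that the paper leaves implicit.
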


\begin{proof}  We give the proof for continuous logic.
Suppose $A, B, C$ are countable, $C=\bigcup_n C_n$, and $C_n\subseteq C_{n+1}$ and $A\ind[d]_{C_n} B$ for each $n$,
but $A\nind[d]_C B$.  Then there exists $\bar a\in A^{<\BN}$ and a continuous formula $\Phi(\bar x,B,C)$ such that
$\Phi(\bar a,B,C)=0$ and $\Phi(\bar x, B,C)$ divides over $C$.  Then $\Phi(\bar x,B,C)=\Phi(\bar x,B,C_n)$ for some $n\in\BN$.
Hence $\Phi(\bar a,B,C_n)=0$ and $\Phi(\bar x,B,C_n)$ divides over $C_n$, contradicting $A\ind[d]_{C_n} B$.
\end{proof}

%% Following lemma is upgraded.
\begin{lemma}  \label{l-ind[e]-union}
On a first order theory,  the relation $\ind[e]$ has the countable union property.
\end{lemma}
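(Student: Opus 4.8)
The plan is to reduce everything to exchange independence. Since $T$ has the exchange property, Theorem \ref{t-indep-FO}(i) gives $\ind[\th]=\ind[e]$, so it suffices to prove that $\ind[e]$ has the countable union property. This reduction is the crux: the definition of $\ind[e]$ is phrased directly in terms of algebraic closure, so it is easy to manipulate, whereas arguing about $\ind[\th]$ itself through Morley sequences and dividing would be considerably more awkward.

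With that in hand, let $A,B,C$ be countable with $C=\bigcup_n C_n$, $C_n\subseteq C_{n+1}$, and assume $A\ind[e]_{C_n}B$ for every $n$. To verify $A\ind[e]_C B$, I would fix a finite tuple $\bar a\in A^{<\BN}$ together with an element $e\in A\cap\acl(\bar a B C)$, and aim to show $e\in\acl(\bar a C)$. By the finite character built into the definition of $\acl$ (an algebraic element is witnessed by an algebraical formula with a single finite parameter tuple), there are finite tuples $\bar b\in B^{<\BN}$ and $\bar c\in C^{<\BN}$ with $e\in\acl(\bar a\,\bar b\,\bar c)$.

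The increasing nature of the union now does the essential work: since $\bar c$ is a finite subset of $C=\bigcup_n C_n$ and the $C_n$ increase, there is a single $n$ with $\bar c\subseteq C_n$. Consequently $e\in\acl(\bar a\,\bar b\,\bar c)\subseteq\acl(\bar a B C_n)$, so $e\in A\cap\acl(\bar a B C_n)$. Applying the hypothesis $A\ind[e]_{C_n}B$ to the tuple $\bar a$ yields $e\in\acl(\bar a C_n)\subseteq\acl(\bar a C)$, as desired. As $\bar a$ and $e$ were arbitrary, $A\cap\acl(\bar a B C)\subseteq\acl(\bar a C)$ for every finite $\bar a\in A^{<\BN}$, which is exactly $A\ind[e]_C B$, and hence $A\ind[\th]_C B$.

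There is no serious obstacle here; the only points to get right are the reduction through Theorem \ref{t-indep-FO}(i) and the observation that finite character of algebraic closure, combined with the directedness of $\{C_n\}$, lets one push each algebraicity witness down into a single stage $C_n$. I would also note that the argument never uses that $A$ and $B$ are countable, nor even that $C$ is countable; it uses only that every finite subset of $C$ lies in some $C_n$, so the same proof in fact shows that $\ind[e]$ is closed under directed unions in the base.
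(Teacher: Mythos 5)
Your proof is correct and is exactly the argument the paper intends: the paper disposes of this lemma in one sentence ("follows easily from the fact that the algebraic closure of a set is the union of the algebraic closures of its finite subsets"), and you have simply spelled out that sentence, including the implicit reduction $\ind[\th]=\ind[e]$ via Theorem \ref{t-indep-FO}(i). Your closing observation that countability is never used and the argument works for directed unions is a correct (and mildly stronger) remark, but the approach is the same.
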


\begin{proof}  Suppose $A, B, C$ are countable, $C=\bigcup_n C_n$, and for all $n$, $C_n\subseteq C_{n+1}$ and $A\ind[e]_{C_n} B$.
Let $\bar a\in A^{<\BN}$ and $g\in A\cap\acl(\bar a BC)$.  Then $g\in A\cap\acl(\bar a BC_n)$ for some $n$.  Since $A\ind[e]_{C_n} B$,
$g\in \acl(\bar a C_n)$.  Hence $g\in \acl(\bar a C)$ and $A\ind[e]_C B$.
\end{proof}

It can also be shown that for any first order theory, the relations $\ind[a], \ind[M] \ ,$ and $\thind$ have the countable union property.
We will not need those results, and leave the proofs as exercises for the reader.

\section{Randomizations}

\subsection{The Theory $T^R$}

Assume hereafter that the models of $T$ have at least two elements.  We now introduce the continuous theory $T^R$.

The \emph{randomization signature} $L^R$ is the two-sorted continuous signature
with sorts $\BK$ (for random elements) and $\BB$ (for events), an $n$-ary
function symbol $\l\varphi(\cdot)\rr$ of sort $\BK^n\to\BB$
for each first order formula $\varphi$ of $L$ with $n$ free variables,
a $[0,1]$-valued unary predicate symbol $\mu$ of sort $\BB$ for probability, and
the Boolean operations $\top,\bot,\sqcap, \sqcup,\neg$ of sort $\BB$.  The signature
$L^R$ also has distance predicates $d_\BB$ of sort $\BB$ and $d_\BK$ of sort $\BK$.
In $L^R$, we use ${\sa B},{\sa C},\ldots$ for variables or parameters of sort $\BB$. ${\sa B}\doteq{\sa C}$
means $d_\BB({\sa B},{\sa C})=0$, and ${\sa B}\sqsubseteq{\sa C}$ means ${\sa B}\doteq{\sa B}\sqcap{\sa C}$.

A pre-structure for $T^R$ will be a pair $\cu P=(\cu K,\cu E)$ where $\cu K$ is the part of sort $\BK$ and
$\cu E$ is the part of sort $\BB$.\footnote{In [BK], the set of events was denoted by $\cu B$, but we use $\cu E$ here
to reserve the letters $\cu A, \cu B, \cu C$ for subsets of $\cu E$.}
The \emph{reduction} of $\cu P$ is the pre-structure $\cu N=(\hat{\cu K},\hat{\cu E})$ obtained from
$\cu P$ by identifying elements at distance zero in the metrics $d_\BK$ and $d_\BB$,
 and the associated mapping from $\cu P$ onto $\cu N$ is called the \emph{reduction map}.
The \emph{completion} of $\cu P$ is the structure obtained by completing the metrics in the reduction of $\cu P$.
By a \emph{pre-complete-structure} we mean a pre-structure $\cu P$ such that the reduction of $\cu P$ is equal to the completion of $\cu P$.
By a \emph{pre-complete-model} of $T^R$ we mean a pre-complete-structure that is a pre-model of $T^R$.

In [BK], the randomization theory $T^R$ is defined by listing a set of axioms.
We will not repeat these axioms here, because it is simpler to give the following model-theoretic
characterization of $T^R$.

\begin{df}  \label{d-neat}
Given a model $\cu M$ of $T$, a \emph{neat randomization of} $\cu M$ is a pre-complete-structure $\cu P=(\cu L,\cu F)$ for $L^R$
equipped with an atomless probability space $(\Omega,\cu F,\mu )$ such that:
\begin{enumerate}
\item $\cu F$ is a $\sigma$-algebra with $\top,\bot,\sqcap, \sqcup,\neg$ interpreted by $\Omega,\emptyset,\cap,\cup,\setminus$.
\item $\cu L$ is a set of functions $a\colon\Omega\to M$.
\item For each formula $\psi(\bar{x})$ of $L$ and tuple
$\bar{a}$ in $\cu L$, we have
$$\l\psi(\bar{a})\rr=\{\omega\in\Omega:\cu M\models\psi(\bar{a}(\omega))\}\in\cu F.$$
\item $\cu F$ is equal to the set of all events
$ \l\psi(\bar{a})\rr$
where $\psi(\bar{v})$ is a formula of $L$ and $\bar{a}$ is a tuple in $\cu L$.
\item For each formula $\theta(u, \bar{v})$
of $L$ and tuple $\bar{b}$ in $\cu L$, there exists $a\in\cu L$ such that
$$ \l \theta(a,\bar{b})\rr=\l(\exists u\,\theta)(\bar{b})\rr.$$
\item On $\cu L$, the distance predicate $d_\BK$ defines the pseudo-metric
$$d_\BK(a,b)= \mu \l a\neq b\rr .$$
\item On $\cu F$, the distance predicate $d_\BB$ defines the pseudo-metric
$$d_\BB({\sa B},{\sa C})=\mu ( {\sa B}\triangle {\sa C}).$$
\end{enumerate}
\end{df}

Note that if $\cu H\prec\cu M$, then every neat randomization of $\cu H$ is also a neat randomization of $\cu M$.

\begin{df}  For each first order theory $T$, the \emph{randomization theory} $T^R$ is
the set of sentences that are true in all neat randomizations of models of $T$.
\end{df}

It follows that for each first order sentence $\varphi$, if $T\models\varphi$
then  $T^R\models \l\varphi\rr\doteq \top$.

\begin{result}  \label{f-perfectwitnesses}  (Fullness, Proposition 2.7 in [BK]).
Every pre-complete-model $\cu N=(\cu K,\cu E)$ of $T^R$ has perfect witnesses, i.e.,
\begin{enumerate}
\item  For each first order formula $\theta(u,\bar v)$ and each $\bar{b }$ in $\cu K^n$ there exists $a \in\cu K$ such that
$$ \l\theta(a,\bar b)\rr \doteq
\l(\exists u\,\theta)(\bar{b })\rr;$$
\item For each ${\sa E}\in\cu E$ there exist $a ,b \in\cu K$ such that
${\sa E}\doteq\l a=b \rr$.
\end{enumerate}
\end{result}

The following results are proved in [Ke1], and are stated in the continuous setting in [BK].

\begin{result}  \label{f-complete} (Theorem 3.10 in [Ke1], and Theorem 2.1 in [BK]).
For every complete first order theory $T$, the randomization theory $T^R$ is complete.
\end{result}

\begin{result} \label{f-qe} (Strong quantifier elimination, Theorems 3.6 and 5.1 in [Ke1], and Theorem 2.9 in [BK])
Every formula $\Phi$ in the continuous language $L^R$
is $T^R$-equivalent to a formula with the same free variables
and no quantifiers of sort $\BK$ or $\BB$.
\end{result}

\begin{result}  \label{f-T^R} (Proposition 4.3 and Example 4.11 in [Ke1], and Proposition 2.2 and Example 3.4 (ii) in [BK]).
Every model $\cu M$ of $T$ has neat randomizations.
\end{result}

\begin{result}  \label{f-glue}  (Lemma 2.1.8 in [AGK])
Let $\cu P=(\cu K,\cu E)$ be a pre-complete-model of $T^R$ and let $a ,b \in\cu K$ and ${\sa B}\in\cu E$.
Then there is an element $c \in\cu K$ that agrees with $a $ on ${\sa B}$ and agrees with $b $ on $\neg{\sa B}$,
that is, ${\sa B}\sqsubseteq\l c =a \rr$ and $(\neg{\sa B})\sqsubseteq\l c =b \rr$.
\end{result}

\begin{result}  \label{f-representation}  (Proposition 2.1.10 in [AGK])
Every model of $T^R$ is isomorphic to the reduction of a neat randomization $\cu P$ of a model of $T$.
\end{result}

\subsection{Permanent Assumptions}

From now on we will work within the big model $\cu N=( {\cu K}, {\cu E})$ of $T^R$.
We let $\cu M$ be the big model of $T$ and let $\cu P=(\cu L,\cu F)$ be a neat randomization of $\cu M$
with probability space $(\Omega,\cu E,\mu)$, such that $\cu N$ is the reduction of $\cu P$.
We may further assume that the probability space $(\Omega,\cu F, \mu)$ of $\cu P$ is complete (that is, every set
that contains a set of $\mu$-measure one belongs to $\cu F$), and that every
function $a\colon \Omega\to M$ that agrees with some $b\in \cu L$ except on a $\mu$-null subset of $\Omega$ belongs to $\cu L$.
The existence of $\cu P$ is guaranteed by Result \ref{f-representation} (Proposition 2.1.10 in [AGK]), and the further assumption
follows from the proof in [AGK].

By saturation, $\cu K$ and $\cu E$ are large.  Hereafter, $A, B, C$ will always denote small subsets of $\cu K$.
For each element $\bo a\in {\cu K}$, we will also choose once and for all an element $a\in\cu L$ such that
the image of $a$ under the reduction map is $\bo a$.  It follows that for each first order formula $\varphi(\bar v)$,
$\l\varphi(\bar{\bo a})\rr$ in $\cu N$ is the image of $\l\varphi(\bar a)\rr$ in $\cu P$ under the reduction map.

For any small $A\subseteq\cu K$ and each $\omega\in\Omega$, we define
$$ A(\omega)=\{a(\omega)\mid \bo a\in  A\},$$
and let $\cl( A)$ denote the closure of $ A$ in the metric $d_\BK$.  When $\cu A\subseteq {\cu E}$,
$\cl(\cu A)$ denotes the closure of $\cu A$ in the metric $d_\BB$, and
$\sigma(\cu A)$ denotes the smallest $\sigma$-subalgebra of $ {\cu E}$ containing $\cu A$.
Since the cardinality $\upsilon$ of $\cu N$ is inaccessible, whenever $A\subseteq\cu K$ is small, the closure $\cl(A)$ and
the set of $n$-types over $A$ is small.  Also, whenever $\cu A\subseteq\cu E$ is small, the closure $\cl(\cu A)$ is small.

\subsection{Definability in $T^R$}

As explained in [AGK, Remark 2.2.4], in models of $T^R$ we need only consider definability over sets of parameters of sort $\BK$.

We write $\dcl_\BB( A)$ for the set of elements of sort $\BB$ that are definable over $ A$ in $\cu N$,
and write $\dcl( A)$ for the set of elements of sort $\BK$ that are definable over $ A$ in $\cu N$.
Similarly for $\acl_\BB( A)$ and $\acl( A)$.  We often use the following result without explicit mention.

\begin{result}  \label{f-acl=dcl}  ([AGK], Proposition 3.3.7; see also [Be2], Theorem 2.17 (iv) and Corollary 5.9.)
$\acl_\BB( A)=\dcl_\BB( A)$ and $\acl( A)=\dcl( A)$.
\end{result}

\begin{df}  We say that an event $\sa E$ is \emph{first order definable over $A$}, in symbols $\sa E\in\fo_\BB(A)$, if
$\sa E=\l\theta(\bar{\bo a})\rr$ for some formula $\theta$ of $L$ and some tuple $\bar{\bo a}\in A^{<\BN}$.
\end{df}

\begin{df}
We say that $\bo b$ is \emph{first order definable over $ A$}, in symbols $\bo b\in\fo( A)$, if there is a functional formula
$\varphi(u,\bar v)$ and a tuple $\bar{\bo a}\in {A}^{<\BN}$ such that
$\l \varphi(b,\bar{a})\rr=\top$.
\end{df}

Note that the formula $u=v$ is functional, and hence $A\subseteq\fo(A)$.

\begin{result} \label{f-separable}  ([AGK], Theorems 3.1.2 and  3.3.6)
$$\dcl_\BB( A)=\cl(\fo_\BB( A))=\sigma(\fo_\BB(A))\subseteq\cu E,\qquad \dcl( A)=\cl(\fo( A))\subseteq\cu K.$$
If $A$ is empty, then $\dcl_\BB(A)=\{\top,\bot\}$, and $\fo(A)$ is closed, so $\dcl(A)=\fo(A)$.
\end{result}

It follows that whenever $A$ is small, $\dcl(A)$ and $\dcl_\BB(A)$ are small.

Using the distance function between an element and a set, \ref{f-separable} can be re-stated as follows:

\begin{rmk}  \label{r-distance}
$$\sa E\in\dcl_\BB(A) \mbox{ if and only if } d_\BB(\sa E,\fo_\BB(A))=0,$$
and
$$\bo b\in\dcl(A) \mbox{ if and only if } d_\BK(\bo b,\fo(A))=0.$$
\end{rmk}

\begin{rmk}  \label{r-dcl-B}  For each small $A$,
$$\fo_\BB(\fo(A))=\fo_\BB(A),\quad\dcl_\BB(\dcl(A))=\dcl_\BB(A).$$
\end{rmk}

\begin{proof}
The first equation is clear, and the second equation follows from the first equation and Remark \ref{r-distance}.
\end{proof}

We will sometimes use the $\l \ldots\rr$ notation in a general setting.  Given a property $P(\omega)$, we write
$$\l P\rr=\{\omega\in\Omega\,:\,P(\omega)\},$$
and we say that
$$P(\omega) \mbox{ holds }\as.$$
if $\l P\rr$ contains a set  $\sa A\in\cu F$ such that $\mu(\sa A)=1$.
For example, $\l b\in\dcl^{\cu M}(A)\rr$ is the set of all $\omega\in\Omega$ such that $b(\omega)\in\dcl^{\cu M}(A(\omega))$.

\begin{result}  \label{f-pointwisemeasurable}  ([AGK], Lemma 3.2.5)
If $A$ is countable, then
$$\l b\in\dcl^{\cu M}(A)\rr= \bigcup\{\l\theta(b,\bar a)\rr\mid\theta(u,\bar v) \mbox{ functional, } \bar{\bo a}\in A^{<\BN}\},$$
and  $\l b\in\dcl^{\cu M}(A)\rr\in\cu F$.
\end{result}

It follows that for each countable $A$,
$$b(\omega)\in\dcl(A(\omega))\as.\Leftrightarrow\mu(\l b\in\dcl(A)\rr)=1.$$

\begin{df} \label{d-pointwise-def}
 We say that $\bo b$ is \emph{pointwise definable over $A$}, in symbols $\bo b\in\dcl^\omega(A)$, if
$$\mu(\l b\in\dcl^{\cu M}(A_0)\rr)=1$$
for some countable $A_0\subseteq A$.

We say that $\bo b$ is \emph{pointwise algebraic over $A$}, in symbols $\bo b\in\acl^\omega(A)$, if
$$\mu(\l b\in\acl^{\cu M}(A_0)\rr)=1$$
for some countable $A_0\subseteq A$.
\end{df}

\begin{rmk}  \label{r-dcl-omega}
$\dcl^\omega$ and $\acl^\omega$ have countable character, that is,
$\bo b\in\dcl^\omega(A)$ if and only if $\bo b\in\dcl^\omega(A_0)$ for some countable $A_0\subseteq A$, and similarly for $\acl^\omega$.
\end{rmk}

\begin{result}  \label{f-dcl3}  ([AGK], Corollary 3.3.4) For any element $\bo b\in {\cu K}$,
$\bo{b}$ is definable over $ A$ if and only if:
\begin{enumerate}
\item $\bo b$ is pointwise definable over $A$;
\item for each functional formula $\varphi(u,\bar v)$ and tuple $\bar{\bo a}\in A^{<\BN}$, $\l\varphi(\bo b, \bar{\bo a})\rr$ is definable over $A$.
\end{enumerate}
\end{result}

\begin{cor}  \label{c-pointwise-alg-def}
In $\cu N$ we always have
$$\acl(A)=\dcl(A)\subseteq\dcl^\omega(A)=\dcl^\omega(\dcl^\omega(A))\subseteq\acl^\omega(A)=\acl^\omega(\acl^\omega(A)).$$
\end{cor}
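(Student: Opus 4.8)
The plan is to verify the chain link by link, reading it as five separate claims. The two outer equalities of the form $\acl=\dcl$ (on elements and, implicitly, events) are exactly Result~\ref{f-acl=dcl}, and the first inclusion $\dcl(A)\subseteq\dcl^\omega(A)$ is immediate from Result~\ref{f-dcl3}(1), which lists pointwise definability as one of the two conditions characterizing definability. The middle inclusion $\dcl^\omega(A)\subseteq\acl^\omega(A)$ is equally direct: if $\mu(\l b\in\dcl^{\cu M}(A_0)\rr)=1$ for some countable $A_0$, then since $\dcl^{\cu M}(A_0(\omega))\subseteq\acl^{\cu M}(A_0(\omega))$ for every $\omega$, the measure-one event witnessing pointwise definability is contained in $\l b\in\acl^{\cu M}(A_0)\rr$, whence $\bo b\in\acl^\omega(A)$. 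The two trivial inclusions $\dcl^\omega(A)\subseteq\dcl^\omega(\dcl^\omega(A))$ and $\acl^\omega(A)\subseteq\acl^\omega(\acl^\omega(A))$ follow from reflexivity and monotonicity of these pointwise closures, which are read off directly from Definition~\ref{d-pointwise-def}.

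The two genuine idempotence statements, $\dcl^\omega(A)=\dcl^\omega(\dcl^\omega(A))$ and $\acl^\omega(A)=\acl^\omega(\acl^\omega(A))$, are established by the same argument, so I would write it in full only for $\dcl^\omega$ and remark that the $\acl^\omega$ case is verbatim. The nontrivial direction is $\dcl^\omega(\dcl^\omega(A))\subseteq\dcl^\omega(A)$. Given $\bo b$ in the left-hand side, I would first use countable character (Remark~\ref{r-dcl-omega}) to obtain a countable $D_0\subseteq\dcl^\omega(A)$ with $\bo b\in\dcl^\omega(D_0)$, and then, again by countable character, a countable $A_{\bo d}\subseteq A$ for each $\bo d\in D_0$ with $\bo d\in\dcl^\omega(A_{\bo d})$. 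Setting $A_0=\bigcup_{\bo d\in D_0}A_{\bo d}$ yields a single countable subset of $A$, which is the candidate parameter set for placing $\bo b$ in $\dcl^\omega(A)$.

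The crux is then to push the almost-sure statements through a pointwise application of first-order idempotence. By Result~\ref{f-pointwisemeasurable}, each event $\l d\in\dcl^{\cu M}(A_0)\rr$ (for $\bo d\in D_0$) and the event $\l b\in\dcl^{\cu M}(D_0)\rr$ lies in $\cu F$ and has measure one. Since $D_0$ is countable, the intersection of the events $\l d\in\dcl^{\cu M}(A_0)\rr$ over $\bo d\in D_0$ still has measure one; on that intersection $D_0(\omega)\subseteq\dcl^{\cu M}(A_0(\omega))$, and intersecting once more with $\l b\in\dcl^{\cu M}(D_0)\rr$ gives $b(\omega)\in\dcl^{\cu M}(D_0(\omega))\subseteq\dcl^{\cu M}(\dcl^{\cu M}(A_0(\omega)))=\dcl^{\cu M}(A_0(\omega))$ on a measure-one set, using idempotence of $\dcl^{\cu M}$ in the first-order structure $\cu M$. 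Hence $\mu(\l b\in\dcl^{\cu M}(A_0)\rr)=1$ and $\bo b\in\dcl^\omega(A)$. The $\acl^\omega$ case is identical, using that $\l d\in\acl^{\cu M}(A_0)\rr$ is again in $\cu F$ for countable $A_0$ (it is the countable union of the events $\l\theta(d,\bar a)\rr\sqcap\l(\exists^{\le n}u)\theta(u,\bar a)\rr$ over algebraical data, by the same reasoning underlying Result~\ref{f-pointwisemeasurable}) together with idempotence of $\acl^{\cu M}$ in $\cu M$.

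The step I expect to demand the most care is precisely this reduction to a single countable parameter set $A_0$ before invoking pointwise idempotence. The almost-sure quantifier commutes with countable — but not arbitrary — intersections, so collapsing the two layers of closure into one must be carried out while every object in play is still countable; it is countable character together with the countability of $D_0$ that secures this, and it is the reason the statement is phrased for the pointwise closures rather than attempted directly for uncountable parameter sets.
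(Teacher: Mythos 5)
Your proposal is correct and follows exactly the route the paper intends: the corollary is stated without proof as a routine consequence of Result~\ref{f-acl=dcl}, Result~\ref{f-dcl3}(1), the pointwise inclusion $\dcl^{\cu M}\subseteq\acl^{\cu M}$, and idempotence of the pointwise closures obtained by collapsing two layers of countable parameter sets via Remark~\ref{r-dcl-omega} and a countable intersection of measure-one events. Your identification of the countability bookkeeping as the one point needing care (since ``almost surely'' survives only countable intersections, and $\dcl^\omega(A)$ is typically large by Proposition~\ref{p-large}) is exactly right.
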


The following proposition gives a warning: the set $\dcl^\omega(A)$ is almost always large
(By contrast, by Result \ref{f-algebraic-cardinality}, for any small set $A$, $\acl(A)$ and $\dcl(A)$ are small.)

\begin{prop}  \label{p-large}  If $|A|>1$, or even if $|\dcl^\omega(A)|>1$, then $\dcl^\omega(A)$ is large.
\end{prop}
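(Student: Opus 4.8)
The plan is to reduce to the hypothesis $|\dcl^\omega(A)|>1$ and then build an injection from the subevents of a fixed positive-measure event into $\dcl^\omega(A)$, using the gluing principle (Result \ref{f-glue}).

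First I would record the reduction: if $|A|>1$ then $A\subseteq\dcl(A)\subseteq\dcl^\omega(A)$ by Corollary \ref{c-pointwise-alg-def}, so $|\dcl^\omega(A)|>1$ as well. Thus it suffices to assume there are two distinct elements $\bo b_0,\bo b_1\in\dcl^\omega(A)$ and to show that $\dcl^\omega(A)$ is large. Fix such $\bo b_0\neq\bo b_1$ and set $\sa D=\l\bo b_0\neq\bo b_1\rr$; since the two elements are distinct in $\cu K$ we have $\mu(\sa D)=d_\BK(\bo b_0,\bo b_1)>0$.

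For each event $\sa B\sqsubseteq\sa D$, Result \ref{f-glue} supplies an element $\bo c_{\sa B}\in\cu K$ agreeing with $\bo b_0$ on $\sa B$ and with $\bo b_1$ on $\neg\sa B$ (apply gluing with $a=\bo b_0$, $b=\bo b_1$, event $\sa B$). I claim $\sa B\mapsto\bo c_{\sa B}$ sends subevents of $\sa D$ into $\dcl^\omega(A)$ and is injective. For membership, pick countable $A_0,A_1\subseteq A$ witnessing $\bo b_0,\bo b_1\in\dcl^\omega(A)$ as in Definition \ref{d-pointwise-def}; then the countable set $A_0\cup A_1$ witnesses $\bo c_{\sa B}\in\dcl^\omega(A)$, because for almost every $\omega$ the value $c_{\sa B}(\omega)$ equals $b_0(\omega)$ or $b_1(\omega)$, and both lie in $\dcl^{\cu M}((A_0\cup A_1)(\omega))$. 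For injectivity, if $\sa B,\sa B'\sqsubseteq\sa D$ differ on a set of positive measure, then on $\sa B\triangle\sa B'$, which lies below $\sa D$ where $\bo b_0$ and $\bo b_1$ disagree almost everywhere, the elements $\bo c_{\sa B}$ and $\bo c_{\sa B'}$ take the distinct values $b_0$ and $b_1$, so $\bo c_{\sa B}\neq\bo c_{\sa B'}$ in $\cu K$.

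The main obstacle is the final counting step: showing that $\sa D$ has at least $\upsilon$ distinct subevents, after which the injection above forces $|\dcl^\omega(A)|\ge\upsilon$. I would argue that since the underlying probability space is atomless and $\cu N$ is $\upsilon$-saturated, for every cardinal $\lambda<\upsilon$ the type asserting the existence of $\lambda$ mutually independent events below $\sa D$, each of measure $\tfrac12\mu(\sa D)$, is finitely satisfiable (using atomlessness to split $\sa D$ repeatedly) and hence realized in $\cu N$; such events are pairwise distinct, giving at least $\lambda$ subevents of $\sa D$. Because $\upsilon$ is inaccessible, and in particular a limit cardinal, letting $\lambda$ range over all cardinals below $\upsilon$ yields at least $\upsilon$ distinct events $\sa B\sqsubseteq\sa D$. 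Composing with the injection then gives $|\dcl^\omega(A)|\ge\upsilon$, i.e.\ $\dcl^\omega(A)$ is large.
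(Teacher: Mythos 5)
Your proposal is correct and follows essentially the same route as the paper: take two distinct elements of $\dcl^\omega(A)$, glue them along events using Result \ref{f-glue} to produce many elements of $\dcl^\omega(A)$, and count the available events via atomlessness and saturation. The only (immaterial) difference is in the counting step, where the paper exhibits for each $n$ a set of $n$ glued elements pairwise at distance $r/2$ and applies saturation directly, while you realize families of $\lambda$ independent subevents for each $\lambda<\upsilon$ and take a supremum.
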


\begin{proof}  We may assume that $A$ is countable.  Take two elements $\bo a\ne \bo b\in\dcl^\omega(A)$.  Then $\mu(\l a\ne b\rr)=r>0$.
By Result \ref{f-glue}, for each event $\sa E\in\cu E$, there is an element $\bo c_{\sa E}\in\cu K$ that agrees with $\bo a$ on $\sa E$ and agrees with $\bo b$ elsewhere.
Then
$\mu(\l \bo c_{\sa E}\in \dcl(A)\rr)=1$, so $\bo c_{\sa E}\in\dcl^\omega(A)$. For each $n$ there is a set of $n$ events $\sa E_1,\ldots,\sa E_n$
such that $d_\BK({\bo c}_{\sa E_i},{\bo c}_{\sa E_j})=d_\BB(\sa E_i,\sa E_j)=r/2$ whenever $i<j\le n$.  Then by saturation, the set
$\dcl^\omega(A)$ has cardinality $\ge\upsilon$ and hence is large.
\end{proof}

\begin{cor}  \label{c-subset}  Let $A$ be a countable subset of $\cu K$ with $|A|>1$.  Then the set of all small $B$ such that
$B(\omega)\subseteq A(\omega) \as.$ is large and contains every subset of $A$.
\end{cor}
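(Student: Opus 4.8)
The plan is to split the statement into its two halves, disposing of the trivial inclusion first and then recycling the construction used in the proof of Proposition \ref{p-large} for the cardinality assertion.

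First I would handle the claim that every subset of $A$ belongs to the family. If $B\subseteq A$, then $B$ is small (indeed countable) and, directly from the definition $B(\omega)=\{b(\omega)\mid \bo b\in B\}$, we have $B(\omega)\subseteq A(\omega)$ for \emph{every} $\omega\in\Omega$, so a fortiori $B(\omega)\subseteq A(\omega) \as.$ No measure-theoretic input is needed here.

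For the ``large'' half, I would reuse the glued elements from Proposition \ref{p-large}. Since $|A|>1$, fix $\bo a\neq\bo b\in A$ and set $r=\mu\l a\neq b\rr>0$. For each event $\sa E\in\cu E$, Result \ref{f-glue} produces an element $\bo c_{\sa E}\in\cu K$ with $\sa E\sqsubseteq\l c_{\sa E}=a\rr$ and $(\neg\sa E)\sqsubseteq\l c_{\sa E}=b\rr$; reading these two conditions off the definition of $\sqsubseteq$ shows that $c_{\sa E}(\omega)\in\{a(\omega),b(\omega)\}\subseteq A(\omega)$ for almost all $\omega$. Hence each singleton $B=\{\bo c_{\sa E}\}$ is a small set with $B(\omega)\subseteq A(\omega) \as.$, and so it lies in the family in question.

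It then remains to see that there are at least $\upsilon$ many distinct such singletons, which is precisely the counting already carried out in the proof of Proposition \ref{p-large}: for each $n$ one finds events $\sa E_1,\dots,\sa E_n$ with $d_\BK(\bo c_{\sa E_i},\bo c_{\sa E_j})=d_\BB(\sa E_i,\sa E_j)=r/2$ for $i<j\le n$, and saturation of $\cu N$ upgrades this to a family of size $\ge\upsilon$ of pairwise $d_\BK$-distinct elements $\bo c_{\sa E}$. Distinct $\bo c_{\sa E}$ yield distinct singletons, so the family of admissible $B$ already contains $\upsilon$ many singletons and is therefore large. The only point that needs any care — and it is genuinely routine given Proposition \ref{p-large} — is the passage from the $\sqsubseteq$-conditions to the almost-sure pointwise membership $c_{\sa E}(\omega)\in A(\omega)$; everything else is either definitional or a verbatim appeal to the earlier cardinality argument.
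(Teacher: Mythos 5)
Your proof is correct and follows essentially the same route as the paper, whose own proof is just the remark ``similar to the proof of Proposition \ref{p-large}'': the trivial inclusion of subsets of $A$, plus the glued elements $\bo c_{\sa E}$ from Result \ref{f-glue} and the saturation/counting argument to get $\upsilon$ many admissible singletons.
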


\begin{proof}  Similar to the proof of Proposition \ref{p-large}.
\end{proof}

\subsection{A Downward Result}

\begin{df}  We say that $T$ has $\acl=\dcl$ if for every set $A$ in $\cu M$ we have  $\acl^{\cu M}(A)=\dcl^{\cu M}(A)$.
\end{df}

For example, every theory with a definable linear ordering has $\acl=\dcl$.  In this section we prove that if $T$ has $\acl=\dcl$ and $T^R$ is real rosy, then $T$ is real rosy.

\begin{lemma}  \label{l-big-embedding}
\noindent\begin{itemize}
\item[(1)]   There is a mapping $b\mapsto \ti b$ from $M$ into $\cu K$ such that
for each tuple $\bar a$ in $M$ and first order formula $\varphi(\bar v)$, if $\cu M\models\varphi(\bar a)$ then $\mu(\l \varphi(\ti {\bar a})\rr) =1$.
\item[(2)]  Let $\ti B =\{\ti b\mid b\in B\}$.  If $B=\dcl^{\cu M}(A)$ then $\ti B=\fo(\ti A)=\dcl(\ti A)$,
\end{itemize}
\end{lemma}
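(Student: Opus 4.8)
The plan is to build the map $b \mapsto \ti b$ by embedding a copy of $\cu M$ into the randomization $\cu K$ as (almost everywhere) constant functions. First I would invoke the blanket assumptions: $\cu N$ is the reduction of a neat randomization $\cu P = (\cu L, \cu F)$ of $\cu M$ over an atomless probability space $(\Omega, \cu F, \mu)$. For each $b \in M$, consider the constant function $\omega \mapsto b$ from $\Omega$ into $M$. Since this function agrees with itself everywhere and the structure is closed under $\mu$-null modifications, it lies in $\cu L$; let $\ti b$ denote the image of this constant function under the reduction map. For a tuple $\bar a$ with $\cu M \models \varphi(\bar a)$, the event $\l \varphi(\ti{\bar a})\rr = \{\omega \in \Omega : \cu M \models \varphi(\bar a(\omega))\} = \{\omega : \cu M \models \varphi(\bar a)\} = \Omega$ by Definition \ref{d-neat}(3), so $\mu(\l\varphi(\ti{\bar a})\rr) = 1$. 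This proves (i).

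For (ii), suppose $B = \dcl^{\cu M}(A)$. I would show the three sets coincide by a chain of inclusions. First, $\ti B \subseteq \fo(\ti A)$: given $b \in B$, there is a functional formula $\varphi(u, \bar v)$ and $\bar a \in A^{<\BN}$ with $\cu M \models \varphi(b, \bar a)$, so by part (i), $\mu(\l\varphi(\ti b, \ti{\bar a})\rr) = 1$, which (since $\varphi$ is functional in $T$ and so $\l\exists^{\le 1} u\, \varphi\rr \doteq \top$) forces $\l\varphi(\ti b, \ti{\bar a})\rr \doteq \top$, witnessing $\ti b \in \fo(\ti A)$ by the definition of first order definability over $\ti A$. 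Next, $\fo(\ti A) \subseteq \dcl(\ti A)$ is immediate from Result \ref{f-separable}, since $\fo(\ti A) \subseteq \cl(\fo(\ti A)) = \dcl(\ti A)$.

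The remaining inclusion $\dcl(\ti A) \subseteq \ti B$ is where I expect the real work to be, and it is the main obstacle. The point is that a general element of $\dcl(\ti A)$ could a priori be a ``glued'' function that is not constant, so I must rule this out using the special structure of $\ti A$ (constants) together with $B = \dcl^{\cu M}(A)$. I would use Result \ref{f-dcl3}: an element $\bo c \in \dcl(\ti A)$ is pointwise definable over $\ti A$, meaning $\mu(\l c \in \dcl^{\cu M}(\ti A_0)\rr) = 1$ for some countable $\ti A_0 \subseteq \ti A$. But $\ti A(\omega) = A$ for every $\omega$ (the functions in $\ti A$ are constant), so $c(\omega) \in \dcl^{\cu M}(A) = B$ almost surely. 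The clause (2) of Result \ref{f-dcl3} then pins down that the events $\l\varphi(\bo c, \ti{\bar a})\rr$ are definable over $\ti A$, and since $\dcl_\BB(\ti A) = \{\top, \bot\}$ would hold if $\ti A$ behaves like a set of constants over a trivial base algebra, each such event is $\top$ or $\bot$, forcing $\bo c$ to be (equal in $\cu K$ to) some constant $\ti b$ with $b \in B$. I would then conclude $\dcl(\ti A) \subseteq \ti B$, closing the loop $\ti B \subseteq \fo(\ti A) \subseteq \dcl(\ti A) \subseteq \ti B$, so all three are equal. The delicate step is verifying that definability of the events $\l\varphi(\bo c, \ti{\bar a})\rr$ over the constants $\ti A$ forces them to be trivial; this is exactly where the hypothesis $\acl = \dcl$ in $T$ (and hence single-valuedness of the relevant formulas) must be brought to bear to prevent nontrivial splitting.
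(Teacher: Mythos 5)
The main gap is in part (i): you assert that the constant function $\omega\mapsto b$ lies in $\cu L$ ``since \dots the structure is closed under $\mu$-null modifications,'' but that blanket assumption only says that a function agreeing almost everywhere with some member of $\cu L$ is itself in $\cu L$; it does not put any constant function into $\cu L$ unless some element of $\cu L$ is already a.e.\ equal to that constant, and there is no reason to expect this for the particular neat randomization $\cu P$ supplied by Result \ref{f-representation}. Nor can you simply enlarge $\cu L$ by the constants: Definition \ref{d-neat}(3) would then require events such as $\l a=c_b\rr=a^{-1}(b)$ to lie in $\cu F$ for every $a\in\cu L$ and every $b\in M$, and Definition \ref{d-neat}(4) would force $\cu F$ (hence the event sort of the reduction) to change. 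The paper instead constructs $\ti b$ by transfinite induction, using Fullness and saturation to realize, for each $b$, the set of conditions $\mu(\l\varphi(\ti{\bar a},u)\rr)=1$ where $\cu M\models\varphi(\bar a,b)$ and $\ti{\bar a}$ ranges over previously chosen images; the resulting $\ti b$ is not a constant function, only an element whose fiber almost surely realizes the correct type.

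This propagates into (ii): your argument for $\dcl(\ti A)\subseteq\ti B$ leans on $\ti A(\omega)=A$ for all $\omega$, which fails for the $\ti b$ actually constructed, and your appeal to $\acl=\dcl$ is spurious --- the lemma carries no such hypothesis (it only enters later, in Proposition \ref{p-downward}), and the argument does not need it since functionality alone pins down the relevant values. The intended argument is much shorter: from (i) one gets $\cu M\models\psi(A,b)$ iff $\l\psi(\ti A,\ti b)\rr\doteq\top$, together with $\l(\exists^{\le1}y)\psi(\ti A,y)\rr\doteq\top$, which yields both inclusions of $\ti B=\fo(\ti A)$ at once; and since distinct $\ti b,\ti c\in\ti B$ satisfy $d_\BK(\ti b,\ti c)=\mu(\l\ti b\ne\ti c\rr)=1$, the set $\ti B$ is closed, so $\fo(\ti A)=\cl(\fo(\ti A))=\dcl(\ti A)$ by Result \ref{f-separable}. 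Your route through Result \ref{f-dcl3} and the triviality of $\dcl_\BB(\ti A)$ could in principle be completed (a countable union of $\top$-or-$\bot$ events covering $\Omega$ must contain a $\top$), but only after (i) is repaired, and it is considerably longer than observing that $\ti B$ is uniformly discrete.
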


\begin{proof} (1)  The result is trivial if $M$ is finite, so we assume $M$ has cardinality $\upsilon$.  Let $M=\{a_\alpha\mid \alpha <\upsilon\}$ with no repetitions, and for each $\beta<\upsilon$ let $M_\beta=\{a_\alpha\mid \alpha<\beta\}.$  Let $S(\beta)$ be the statement
\begin{itemize}
\item[$S(\beta)$:] For each tuple $\bar a$ in $M_\beta$ and each $\varphi(\bar v)$, if $\cu M\models\varphi(\bar a)$ then $\mu(\l \varphi(\ti {\bar a})\rr) =1$.
\end{itemize}
By transfinite recursion we will build a sequence
$\<\ti{a}_\alpha\mid \alpha <\upsilon\>$ of elements of $\cu K$ such that $S(\alpha+1)$ holds for each $\alpha<\upsilon$,

Suppose we have already defined $\<\ti{a}_\alpha\mid \alpha<\beta\>$  such that $S(\alpha+1)$ holds for all $\alpha<\beta$.  Since each tuple in $M_\beta$ is a tuple in $M_{\alpha_+1}$ for some $\alpha<\beta$, $S(\beta)$ holds.  Let $\Gamma(u)$ be the set of all continuous statements
$$\Gamma(u)=\{\mu(\l \varphi(\ti{\bar a},u)\rr) =1\mid \bar{a}\in (M_\beta)^{<\BN}, \varphi(\bar v,u)\in [L],\cu M\models \varphi(\bar{a},a_\beta)\}.$$
$\Gamma(u)$ is small, and up to equivalence, $\Gamma(u)$ is closed under finite conjunctions.  If $\bar{a}$ is a tuple in $M_\beta$ and $\cu{M}\models \varphi(\bar{a},a_\beta)$, then $\cu{M}\models\exists u \varphi(\bar{a},u)$, so by $S(\beta)$ we have $\mu(\l \exists u\varphi(\ti {\bar a},u)\rr) =1.$  By Fullness, $\Gamma(u)$ is finitely satisfiable in $\cu N$.  Since $\cu N$ is saturated, $\Gamma(u)$ is satisfiable in $\cu N.$  We choose an element $\bo b\in\cu K$ that satisfies $\Gamma(u)$ in $\cu N$ and define $\ti{a}_\beta = \bo b$.  Then $S(\beta+1)$ holds.  This completes our transfinite recursion and proves (1).

(2)  For any first order functional formula $\psi(X,y)$, we have
$\cu M\models \psi(A,b)$ iff $\cu N\models\l\psi(\ti A,\ti b)\rr=\top$, and $\cu N\models\l(\exists ^{\le 1} y)\psi(\ti A,y)\rr=\top$.
Therefore $\ti B=\fo(\ti A)$.  For any two distinct elements $\ti b, \ti c$ of $\ti B$, we have $\cu M\models b\ne c$, so $\cu N\models\mu(\l\ti b\ne\ti c\rr)=1$
and hence $d_\BK(\ti b,\ti c)=1$.  Thus any two elements of $\ti B$ have distance $1$ from each other,
so $\ti B$ is closed in $\cu N$.  By Result \ref{f-separable}, $\fo(\ti A)=\cl(\fo(\ti A))=\dcl(\ti A)$.
\end{proof}

\begin{prop}  \label{p-downward}  Suppose $T$ has $\acl=\dcl$,
 $\ind[I]$ is one of the relations $\ind[a],\, \ind[M], \,\thind$, and $A, B, C$ are small subsets of $M$.
 If $\ti A\ind[I]_{\ti C} \ti B$ holds in $\cu N$, then $A\ind[I]_C B$ holds in $\cu M$.
\end{prop}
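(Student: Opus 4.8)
The map $b\mapsto\ti b$ of Lemma~\ref{l-big-embedding} is the engine of the whole argument, so first I would record its basic features. It is injective, any two distinct images lie at $d_\BK$-distance $1$, and---crucially---by Lemma~\ref{l-big-embedding}(ii), the hypothesis $\acl^{\cu M}=\dcl^{\cu M}$, and Result~\ref{f-acl=dcl}, it carries algebraic closure in $\cu M$ to algebraic ($=$ definable) closure in $\cu N$:
\[
\widetilde{\acl^{\cu M}(X)}=\dcl^{\cu N}(\ti X)=\acl^{\cu N}(\ti X)\qquad(X\subseteq M).
\]
In particular $\acl^{\cu N}(\ti X)$ consists entirely of \emph{constant} elements $\ti f$ with $f\in\acl^{\cu M}(X)$. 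Injectivity yields $\widetilde{X\cap Y}=\ti X\cap\ti Y$ and $\widetilde{X\cup Y}=\ti X\cup\ti Y$, and since a first order type over a constant parameter set is pinned down by the events of measure $0$ and $1$, we get $\ti A\equiv_{\ti B\ti C}\ti A'$ in $\cu N$ if and only if $A\equiv_{BC}A'$ in $\cu M$.

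These facts turn the cases $\ind[a]$ and $\ind[M]$ into exact equivalences. For $\ind[a]$, feeding the displayed identity (applied to $AC$, $BC$, $C$) into the definition and using intersection-preservation gives
\[
\acl^{\cu N}(\ti A\ti C)\cap\acl^{\cu N}(\ti B\ti C)=\widetilde{\acl^{\cu M}(AC)\cap\acl^{\cu M}(BC)},
\]
so by injectivity $\ti A\ind[a]_{\ti C}\ti B\Leftrightarrow A\ind[a]_C B$, and the same holds with $\ti C$ replaced by $\ti D$ for any constant set $D$. This equivalence is the workhorse. For $\ind[M]$ the point is that the base in the definition ranges over $[\ti C,\acl^{\cu N}(\ti B\ti C)]$; since $\acl^{\cu N}(\ti B\ti C)=\widetilde{\acl^{\cu M}(BC)}$ is a set of constants, \emph{every} such base is of the form $\ti D$ with $D\in[C,\acl^{\cu M}(BC)]$, and applying the $\ind[a]$-equivalence at each $\ti D$ yields $\ti A\ind[M]_{\ti C}\ti B\Leftrightarrow A\ind[M]_C B$. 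Thus for $\ind[a]$ and $\ind[M]$ the proposition holds in both directions, with no measure theory beyond the closure identity.

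The thorn case is where the real work lies, and I expect it to be the main obstacle. Arguing directly, fix $E\supseteq BC$ in $\cu M$; applying $\ti A\thind_{\ti C}\ti B$ to the parameter set $\ti E\supseteq\ti B\ti C$ produces $\cu A'\equiv_{\ti B\ti C}\ti A$ in $\cu N$ with $\cu A'\ind[M]_{\ti C}\ti E$, and I must manufacture from it a \emph{constant} tuple $A'\subseteq M$ with $A'\equiv_{BC}A$ and $A'\ind[M]_C E$, which verifies $A\thind_C B$. After reducing to countable $A,B,C,E$ (using finite character and the countable reductions available for these relations), the constant-type computation shows that almost every fiber $\cu A'(\omega)$ realizes $\tp(A/BC)$. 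The difficulty is that $\cu A'$ need not be constant: because the base in $\cu A'\ind[M]_{\ti C}\ti E$ is forced to range only over the constant sets $\ti D$, and $\acl^{\cu N}(\ti E)$ contains only constants, a ``spread-out'' witness whose fibers are each $\ind[M]$-\emph{dependent} on $E$ but through an $\omega$-varying algebraic datum can still satisfy $\cu A'\ind[M]_{\ti C}\ti E$, since no single $f\in\acl^{\cu M}(E)$ need be definable over $\cu A'(\omega)\ti D$ for almost all $\omega$. Hence the naive contrapositive with event set $\ti E$ is insufficient, and the crux is to run it with a strictly larger event set in $\cu N$: using the atomless algebra $\cu E$ to ``localize'' the fibers, one adjoins sort-$\BK$ elements whose associated events record which bad datum is active, so that independence over the enlarged set forces a single witness to operate almost everywhere, converting the almost-everywhere fiberwise dependence supplied by $A\nthind_C B$ into genuine dependence in $\cu N$ via the pointwise-definability description of Corollary~\ref{c-pointwise-alg-def} and the gluing Result~\ref{f-glue}. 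Carrying out this localization, and checking that the enlarged event set does not disturb $\tp(\ti A/\ti B\ti C)$, is the technical heart of the argument.
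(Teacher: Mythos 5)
Your treatment of the $\ind[a]$ and $\ind[M]$ cases is correct and is essentially the paper's own argument: both reduce to the identity $\widetilde{\acl^{\cu M}(X)}=\acl^{\cu N}(\ti X)$ coming from Lemma \ref{l-big-embedding}, Result \ref{f-acl=dcl}, and the hypothesis $\acl=\dcl$, together with the observation that every base in $[\ti C,\acl^{\cu N}(\ti B\ti C)]$ is of the form $\ti D$ with $D\in[C,\acl^{\cu M}(BC)]$.

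The $\thind$ case, however, is a genuine gap. You correctly isolate the obstacle---the witness $A_1\equiv_{\ti B\ti C}\ti A$ with $A_1\ind[M]_{\ti C}\ti D$ produced in $\cu N$ need not be a set of constants---but you do not overcome it: you pivot to an uncarried-out ``localization'' plan (arguing by contrapositive, enlarging the event set, adjoining sort-$\BK$ elements, gluing) and explicitly defer its execution, so no proof of the hardest third of the proposition is actually given; it is also unclear that this plan would succeed. The paper's resolution is direct and needs none of that machinery. Since $F:=\acl^{\cu M}(CD)=\dcl^{\cu M}(CD)$ satisfies $\ti F=\acl^{\cu N}(\ti C\ti D)$, a set of constants, the family $\{\varphi(X,F)\mid \l\varphi(A_1,\ti F)\rr=\top \mbox{ in } \cu N\}$ is a finitely satisfiable collection of first order formulas over $F$, hence is realized in $\cu M$ by some set $G$ by saturation of $\cu M$. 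One then checks that $\ti G\equiv_{\ti B\ti C}A_1\equiv_{\ti B\ti C}\ti A$ and that $\ti G\ind[a]_{\ti E}\ti D$ for every $E\in[C,F]$; the key point is that any witness to a failure of $G\ind[a]_E D$ would be an element $f$ of $\dcl^{\cu M}(DE)\subseteq F$, i.e.\ a constant, so the relevant algebraicity transfers between $\cu M$ and $\cu N$. This yields $G\equiv_{BC}A$ and $G\ind[M]_C D$, hence $A\thind_C B$. The missing idea in your proposal is precisely this downward transfer of the witness via saturation of $\cu M$ applied to the measure-one part of the type of $A_1$ over $\ti F$; without it (or a completed version of your localization scheme) the thorn case remains unproven.
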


\begin{proof}  Suppose first that $\ti A\ind[a]_{\ti C} \ti B$ in $\cu N$. Then
$$\acl(\ti A \ti C)\cap\acl(\ti B\ti C)=\acl(\ti C).$$
Let
$$A'=\acl^{\cu M}(AC), B'=\acl^{\cu M}(BC), C'=\acl^{\cu M}(C).$$
By Result \ref{f-acl=dcl} and Lemma \ref{l-big-embedding},
 $A'=\dcl^{\cu M}(AC)$, and $\ti{A'}=\dcl(\ti A\ti C)=\acl(\ti A\ti C)$.  Similarly for $B'$ and $C'$.
Therefore $\ti {A'}\cap \ti {B'}= \ti {C'}$.  It follows that $A'\cap B'=C'$, which means that $A\ind[a]_C B$ in $\cu M$.

Now suppose that $\ti A\ind[M]_{\ti C} \ti B$ in $\cu N$.  Let $D\in[C,\acl^{\cu M}(BC)]$ in $\cu M$.  Then $D\subseteq\dcl^{\cu M}(BC)$,
so by Lemma \ref{l-big-embedding}, $\ti D\subseteq\dcl(\ti B\ti C)=\acl(\ti B\ti C)$.  Hence $\ti A\ind[a]_{\ti D} \ti B$ in $\cu N$, so
$A\ind[a]_D B$  and $A\ind[M]_C B$ in $\cu M$.

Suppose that $\ti A\thind_{\ti C}\ti B$ in $\cu N$.  Let $B\subseteq D$ with $D$ small in $\cu M$.  Then $\ti B\subseteq \ti D$ in $\cu N$ and $\ti D$ is small.
Hence there exists $A_1\equiv_{\ti B\ti C} \ti A$ such that $A_1\ind[M]_{\ti C}\ti D$ in $\cu N$.  Then for every $E_1\in[\ti C,\acl(\ti C\ti D)]$
we have $A_1\ind[a]_{E_1}\ti D$ in $\cu N$.  By Lemma \ref{l-big-embedding} and the assumption that $T$ has $\acl=\dcl$, $E_1\in[\ti C,\acl(\ti C\ti D)]$ if and only if
 $E_1=\ti E$ for some $E\in[C,\acl(CD)]$.  So $A_1\ind[a]_{\ti E}\ti D$ in $\cu N$ for every $E\in[C,\acl(CD)]$.
Let $F=\acl(CD)=\dcl(CD)$ in $\cu M$, so  by Lemma \ref{l-big-embedding}, $\ti F=\acl(\ti C\ti D)$ in $\cu N$.
By saturation in $\cu M$, there is a set $G\subseteq M$ such that for every first order formula $\varphi(X,\ti F)$ such that $\l\varphi(A_1,\ti F)\rr=\top$
in $\cu N$, we have $\cu M\models \varphi(G,F)$.  Then in $\cu N$ we have $\ti G\equiv_{\ti B\ti C} A_1\equiv_{\ti B\ti C} \ti A$,
and $\ti G\ind[a]_{\ti E}\ti D$  for every $E\in[C,\acl(CD)]$. Therefore in $\cu M$ we have $G\equiv_{BC} A$ and $G\ind[a]_E D$  for every $E\in[C,\acl(CD)]$.
It follows that $G\ind[M]_C D$ and $A\thind_C B$ in $\cu M$.
\end{proof}

The following Corollary can be compared with Corollary 7.9 of [EG], which says that if $T^R$ is maximally real rosy then $T$ is real rosy.

\begin{cor}  \label{c-rosy-downward}
Suppose $T$ has $\acl=\dcl$ and $T^R$ is real rosy.  Then $T$ is real rosy.
\end{cor}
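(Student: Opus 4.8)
The plan is to unwind the definition of \emph{real rosy} and then transport local character of thorn independence from $\cu N$ down to $\cu M$, using the embedding of Lemma \ref{l-big-embedding} together with Proposition \ref{p-downward}. Recall that $T^R$ real rosy means precisely that $\thind$ in $\cu N$ has local character, and $T$ real rosy means precisely that $\thind$ in $\cu M$ has local character. So the entire task is to produce, for each small $A\subseteq M$, a bound $\kappa(A)<\upsilon$ witnessing local character of $\thind$ in $\cu M$.

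First I would fix a small $A\subseteq M$ and apply local character of $\thind$ in $\cu N$ to the image $\ti A$. This yields a cardinal $\kappa(\ti A)<\upsilon$ such that for every small subset of $\cu K$ there is a base of size $<\kappa(\ti A)$ over which $\ti A$ is thorn-independent from that set. Then, given an arbitrary small $B\subseteq M$, I would apply this to the set $\ti B\subseteq\cu K$, obtaining some $\cu C\subseteq\ti B$ with $|\cu C|<\kappa(\ti A)$ and $\ti A\thind_{\cu C}\ti B$ in $\cu N$.

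The key observation is that the base $\cu C$, being a subset of $\ti B=\{\ti b\mid b\in B\}$, is automatically of the form $\ti C$ for $C=\{b\in B\mid \ti b\in\cu C\}\subseteq B$, and since the embedding $b\mapsto\ti b$ is injective (distinct images lie at $d_\BK$-distance $1$, as noted in the proof of Lemma \ref{l-big-embedding}) we get $|C|=|\cu C|<\kappa(\ti A)$. With this in hand, Proposition \ref{p-downward} applied in the case $\ind[I]=\thind$ converts $\ti A\thind_{\ti C}\ti B$ in $\cu N$ into $A\thind_C B$ in $\cu M$. Setting $\kappa(A):=\kappa(\ti A)$, which depends only on $A$ since $\ti A$ is determined by $A$, then gives local character of $\thind$ in $\cu M$, i.e.\ $T$ is real rosy.

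Since Proposition \ref{p-downward} does the real work, I do not expect a serious obstacle; the one point that requires care is the pull-back of the base from $\ti B$ to $B$, namely that the witness $\cu C$ produced by local character upstairs really is the image of a subset of $B$ of the same cardinality, rather than some exotic small subset of $\cu K$. This is exactly where restricting the downstairs application to $B'=\ti B$, rather than to an arbitrary small set, is essential.
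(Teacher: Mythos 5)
Your proposal is correct and follows essentially the same route as the paper: apply local character of $\thind$ in $\cu N$ to $\ti A$ and $\ti B$, observe that the resulting base $\cu C\subseteq\ti B$ is of the form $\ti C$ for some $C\subseteq B$ of the same cardinality, and then invoke Proposition \ref{p-downward} to pull $\ti A\thind_{\ti C}\ti B$ down to $A\thind_C B$, taking $\kappa^{\cu M}(A)=\kappa^{\cu N}(\ti A)$. The paper's proof is identical in substance, merely stating as ``clear'' the pull-back of the base that you spell out via injectivity of $b\mapsto\ti b$.
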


\begin{proof}  By hypothesis, the relation $\thind$ over $\cu N$ has local character, with some bound $\kappa^{\cu N}(A)$.
We show that $\thind$ over $\cu M$ has local character with bound $\kappa^{\cu M}(A)=\kappa^{\cu N}(\ti A)$.
Let $A, B\subseteq M$ be small.  Then there is a set $C'\subseteq \ti B$ such that $|C'|<\kappa^{\cu N}(\ti A)$ and $\ti A\thind_{C'} \ti B$ in $\cu N$.
It is  clear that $C'=\ti C$ for some set $C\subseteq B$.  By Proposition \ref{p-downward} we have $A\thind_C B$ in $\cu M$.
\end{proof}

\section{Independence in the Event Sort}

The one-sorted continuous theory $\APr$ of atomless probability algebras is studied in the papers [Be1], [Be2], and [BBHU].
By Fact 2.10 in [Be2], for every model $\cu N=(\cu K,\cu E)$ of $T^R$, the event sort $(\cu E,\mu)$ of $\cu N$
 is a model of $\APr$.  For each cardinal $\kappa$, if $\cu N$ is $\kappa$-saturated then $( {\cu E},\mu)$ is $\kappa$-saturated.
For every set $\cu A\subseteq\cu E$, we have
$\acl(\cu A)=\dcl(\cu A)=\sigma(\cu A)$ in $(\cu E,\mu)$.  The algebraic independence relation in $(\cu E,\mu)$ is the relation
$$ \cu A\ind[a]_{\cu C} \cu B \Leftrightarrow \sigma(\cu A\cu C)\cap\sigma(\cu B\cu C)=\sigma(\cu C).$$

\begin{result}  \label{f-probability-algebra}  (Ben Yaacov [Be1])
The theory $\APr$ is separably categorical, admits quantifier elimination, and is stable.  The independence relation $\ind[d]$ on $\APr$
is the same as the relation of probabilistic independence, given by $\cu A\ind[d]_{\cu C} \cu B$ iff
$$\mu[\sa A\sqcap\sa B | \sigma(\cu C)]=\mu[\sa A | \sigma(\cu C)]\mu[\sa B | \sigma(\cu C)] \as \mbox{ for all } \sa A\in \sigma(\cu A), \sa B\in\sigma(\cu B).$$
\end{result}

\subsection{Dividing Independence in the Event Sort}

We now consider the analogue of the dividing independence relation $\ind[d]$ in the event sort.
Given sets $C,D\subseteq \cu K$, it will be convenient to introduce the notation  $\cu D=\fo_\BB(D)$ and $\cu D_C=\fo_\BB(DC)$.

\begin{rmk}  \label{r-ind-B}
 By Result \ref{f-separable}, $\acl_\BB(D)=\sigma(\cu D)=\acl(\cu D)$.
\end{rmk}

\begin{df}  For small $A, B, C\subseteq\cu K$, define
$$  A\ind[d\BB]_C \ B \Leftrightarrow \cu A_C\ind[d]_{\cu C} \cu B_C \mbox{ in } (\cu E, \mu).$$
\end{df}

\begin{lemma}  \label{l-fB-basic}
$\ind[d\BB] \ \ $ satisfies the basic axioms, symmetry, finite character, and the countable  union property.
\end{lemma}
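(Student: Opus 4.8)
The plan is to reduce every clause to the corresponding property of the dividing independence relation $\ind[d]$ on the event sort $(\cu E,\mu)$, which by Result \ref{f-probability-algebra} is probabilistic independence and, since $\APr$ is stable, is a strict independence relation (Result \ref{f-stable-indep}); in particular it has all of the basic axioms, symmetry, and countable character. Throughout I would use the dictionary $\acl_\BB(D)=\sigma(\fo_\BB(D))$ (Remark \ref{r-ind-B}), the fact that $D\mapsto\fo_\BB(D)$ is monotone, that each $\fo_\BB(D)$ is a Boolean subalgebra of $\cu E$, and that $\fo_\BB(D)=\bigcup\{\fo_\BB(D_0):D_0\subseteq D\text{ finite}\}$ is a directed union generating $\acl_\BB(D)$. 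Unwinding the definition, $A\ind[d\BB]_C B$ says exactly that $\acl_\BB(AC)$ and $\acl_\BB(BC)$ are conditionally independent given $\acl_\BB(C)$, and note $\acl_\BB(C)\subseteq\acl_\BB(AC)\cap\acl_\BB(BC)$.

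First I would dispatch the easy axioms. Invariance holds because an elementary map of $\cu N$ restricts to a measure preserving automorphism of $(\cu E,\mu)$ carrying $\fo_\BB(D)$ to $\fo_\BB$ of the image, so invariance of $\ind[d]$ transfers. Monotonicity follows from monotonicity of $\fo_\BB$ and of $\ind[d]$, normality is immediate since $\fo_\BB(ACC)=\fo_\BB(AC)$, and symmetry is inherited directly from symmetry of $\ind[d]$. Transitivity reduces cleanly: for $D\subseteq C\subseteq B$ the hypotheses become $\acl_\BB(B)\perp\acl_\BB(AC)\mid\acl_\BB(C)$ and $\acl_\BB(C)\perp\acl_\BB(AD)\mid\acl_\BB(D)$; shrinking the first by monotonicity of $\ind[d]$ to $\acl_\BB(AD)$ and applying transitivity of $\ind[d]$ along the chain $\acl_\BB(D)\subseteq\acl_\BB(C)\subseteq\acl_\BB(B)$ yields $\acl_\BB(B)\perp\acl_\BB(AD)\mid\acl_\BB(D)$, which is the conclusion. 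Here the conclusion's left side $\acl_\BB(AD)$ is a \emph{sub}algebra of $\acl_\BB(AC)$, so no enlargement is needed.

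For finite character and the countable union property I would argue on the generating algebras. For finite character, $\fo_\BB(AC)=\bigcup_{A_0\text{ fin}}\fo_\BB(A_0C)$ is a directed union of Boolean algebras generating $\acl_\BB(AC)$; the factorization coming from each $A_0\ind[d\BB]_C B$ holds on every generator $\l\theta(\bar{\bo a},\bar{\bo c})\rr$, and extends to all of $\acl_\BB(AC)$ by a monotone-class argument, giving $A\ind[d\BB]_C B$. For the countable union property, when $C_n\uparrow C$ the three families $\acl_\BB(AC_n),\acl_\BB(BC_n),\acl_\BB(C_n)$ increase to $\acl_\BB(AC),\acl_\BB(BC),\acl_\BB(C)$, and conditional independence is preserved under such compatible increasing limits by martingale convergence of the conditional expectations: checking the factorization on a fixed generator and letting $n\to\infty$ gives the limit.

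The main obstacle is base monotonicity, where all the real content sits. The difficulty is that enlarging the base from $D$ to $C$ (with $D\subseteq C\subseteq B$) simultaneously enlarges \emph{both} sides, from $\acl_\BB(AD)$ up to $\acl_\BB(AC)$, and the new generators $\l\theta(\bar{\bo a},\bar{\bo c})\rr$ with $\bar{\bo c}\in C\setminus D$ couple $A$-parameters with $C$-parameters and so do not lie in $\sigma(\fo_\BB(AD)\cup\fo_\BB(C))$; thus base monotonicity of $\ind[d]$, which keeps the two sides fixed, does not apply directly. The plan is: first use base monotonicity of $\ind[d]$ with $\acl_\BB(D)\subseteq\acl_\BB(C)\subseteq\acl_\BB(B)$ to pass from the hypothesis to $\acl_\BB(AD)\perp\acl_\BB(B)\mid\acl_\BB(C)$; then, since $\acl_\BB(C)\subseteq\acl_\BB(B)$, reduce the goal via the conditional-expectation characterization to showing that each mixed generator collapses, i.e. $\mu[\l\theta(\bar{\bo a},\bar{\bo c})\rr\mid\acl_\BB(B)]=\mu[\l\theta(\bar{\bo a},\bar{\bo c})\rr\mid\acl_\BB(C)]$ almost surely. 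To establish this I would exploit that $\bar{\bo c}$ is $\acl_\BB(C)$-measurable and try to substitute it as a parameter into the factorization of the $A$-side against $\acl_\BB(B)$ over $\acl_\BB(C)$, approximating the $\theta$-section by finitely many $C$-definable pieces. Controlling these mixed events through the assumed conditional independence is precisely the delicate step, and it is the one I expect to require by far the most care.
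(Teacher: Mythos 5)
Everything in your proposal except base monotonicity is correct and essentially follows the paper's route of reducing each clause to the corresponding property of $\ind[d]$ over $(\cu E,\mu)$: your transitivity argument (shrink the left side of the first hypothesis to $\acl_\BB(AD)$, then apply transitivity of $\ind[d]$ along $\fo_\BB(D)\subseteq\fo_\BB(C)\subseteq\fo_\BB(B)$) is word for word the paper's, your finite character argument works (though finite character of $\ind[d]$ itself already suffices, since $\fo_\BB(AC)$ is the directed union of the $\fo_\BB(A_0C)$ over finite $A_0$), and your martingale-convergence proof of the countable union property is a legitimate alternative to the paper's syntactic one via Lemma \ref{l-d-union} combined with monotonicity and finite character of $\ind[d]$.

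The genuine gap is base monotonicity, and it is not one you can close: the step you defer as ``delicate'' --- that each mixed generator satisfies $\mu[\l\theta(\bar{\bo a},\bar{\bo c})\rr\mid\acl_\BB(B)]=\mu[\l\theta(\bar{\bo a},\bar{\bo c})\rr\mid\acl_\BB(C)]$ a.s.\ --- is false, and in fact base monotonicity itself fails for $\ind[d\BB] \ $ as defined. Take $T=\DLO$ (or the theory of an infinite set), let $\bo a\in\cu K$ be a constant function, and let $\bo c\in\cu K$ agree with $\bo a$ on an event of measure $1/2$ and take a different constant value elsewhere. Put $A=\{\bo a\}$, $C=\{\bo c\}$, $B=\{\bo a,\bo c\}$, $D=\emptyset$, so $C\in[D,B]$. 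A formula evaluated at a tuple all of whose entries equal $\bo a$ (resp.\ $\bo c$) reduces to a one-variable $\emptyset$-definable set, which is trivial in $T$; hence $\fo_\BB(AD)$ and $\fo_\BB(C)$ both reduce to $\{\top,\bot\}$ and $A\ind[d\BB]_D \ B$ holds vacuously. But $\sa E=\l \bo a=\bo c\rr$ has measure $1/2$ and lies in both $\sigma(\fo_\BB(AC))$ and $\sigma(\fo_\BB(BC))$, while $\sigma(\fo_\BB(C))=\{\top,\bot\}$; taking $\sa A=\sa B=\sa E$ in the characterization of Result \ref{f-probability-algebra} gives $1/2\ne 1/4$, so $A\nind[d\BB]_C \ B$. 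Your diagnosis of where the difficulty lies is exactly right --- enlarging the base from $D$ to $C$ adjoins mixed events $\l\theta(\bar{\bo a},\bar{\bo c})\rr$ that need not lie in $\sigma(\fo_\BB(AD)\cup\fo_\BB(C))$ --- but note that the paper's own proof never confronts this either: it asserts that base monotonicity ``follows easily,'' and base monotonicity of $\ind[d]$ over $(\cu E,\mu)$ only yields $\fo_\BB(AD)\ind[d]_{\fo_\BB(C)}\fo_\BB(B)$, not the required statement with $\fo_\BB(AC)$ on the left. So what you have found is a defect in the statement itself rather than merely the hard part of its proof.
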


\begin{proof}  By Result \ref{f-probability-algebra}, $\ind[d] \ $ is
an independence relation over $(\cu E,\mu)$.  It follows easily that $\ind[d\BB] \ \ $ satisfies invariance, monotonicity,
base monotonicity, normality, finite character, and symmetry.

Transitivity: Suppose $C\in[D,B]$, $B\ind[d\BB]_C \ A$, and $C\ind[d\BB]_D \ A$.
Then $\cu A_C$,  $\cu B_C$, and $\cu C$ are small, $\cu C\in[\cu D,\cu B]$,
and $\cu A_D\subseteq\cu A_C$.  We have
$\cu B_C\ind[d]_{\cu C} \cu A_C$ and $\cu C_D\ind[d]_{\cu D} \cu A_D$.  By monotonicity of $\ind[d]$, $\cu B_D\ind[d]_{\cu C} \cu A_D$.  Then
by transitivity of $\ind[d]$,  $\cu B_D\ind[d]_{\cu D} \cu A_D$, so $B\ind[d\BB]_D \ A$.

Countable union property:
By Lemma \ref{l-d-union},
$\ind[d]$ has the countable union property over $(\cu E,\mu)$.  Suppose $A,B, C$ are countable, $C=\bigcup_n C_n$, and $C_n\subseteq C_{n+1}$
and $A\ind[d\BB]_{C_n} B$ for each $n$.  By monotonicity for $\ind[d]$, whenever $n\le m$ we have $\cu A_{C_n}\ind[d]_{\cu C_m} \cu B_{C_n}$.
By the  countable union property for $\ind[d]$ over $(\cu E,\mu)$, for each $n$ we have $\cu A_{C_n}\ind[d]_{\cu C} \cu B_{C_n}$.
Then by finite character and monotonicity for $\ind[d]$, it follows that $\cu A_{C}\ind[d]_{\cu C} \cu B_{C}$, so $A\ind[d\BB]_C \ B$, so $\ind[d\BB] \ \ $ has
the countable union property.
\end{proof}

\begin{lemma}  \label{l-fB-existence}
$\ind[d\BB] \ \ $ satisfies extension and full existence.
\end{lemma}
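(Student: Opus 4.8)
$\ind[d\BB]$ satisfies extension and full existence.

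The plan is to reduce both properties for $\ind[d\BB]$ in the $\BK$-sort to the corresponding properties for $\ind[d]$ in the event sort $(\cu E,\mu)$, which we already know to be well-behaved because $\APr$ is stable (Result \ref{f-probability-algebra}). By Lemma \ref{l-fB-basic} we already have that $\ind[d\BB]$ satisfies the basic axioms, symmetry, and finite character. In particular, by Remarks \ref{r-fe-vs-ext}(iii), full existence follows from monotonicity together with extension and the identity $A\ind[d\BB]_C C$ (which itself follows from base monotonicity and local character, via Remarks \ref{r-fe-vs-ext}(ii), once we know local character holds — alternatively it is immediate from $\cu C_C\ind[d]_{\cu C}\cu C_C$). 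So the real work is \emph{extension}; full existence should then be essentially a formal consequence, and I would state it as such rather than proving it separately.

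For extension, suppose $A\ind[d\BB]_C B$, i.e.\ $\cu A_C\ind[d]_{\cu C}\cu B_C$ in $(\cu E,\mu)$, and let $\hat B\supseteq B$ be small. I want to produce $A'\equiv_{BC}A$ (in $\cu N$, the $\BK$-sort) with $A'\ind[d\BB]_C\hat B$. First I would apply extension for $\ind[d]$ in $(\cu E,\mu)$ to the event-sort data: since $\ind[d]$ satisfies extension there, I can find a copy $\cu A'$ of $\cu A_C$ over $\cu B_C\cu C$ that is $\ind[d]$-independent from $\fo_\BB(\hat B C)$ over $\cu C$. The crux is then to \emph{lift} this event-sort independence back to an honest element $A'$ of the $\BK$-sort realizing the right type over $BC$. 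Here I expect to use the representation of $\cu N$ as the reduction of a neat randomization (Result \ref{f-representation}) together with perfect witnesses / fullness (Result \ref{f-perfectwitnesses}) and quantifier elimination for $T^R$ (Result \ref{f-qe}): strong quantifier elimination means the type of a tuple in the $\BK$-sort over a parameter set is controlled by the events $\l\varphi(\cdot)\rr$ it generates, so arranging the correct behavior of these generated events in the event sort should suffice to arrange $A'\equiv_{BC}A$. The identity $\cu A'_C = \fo_\BB(A'C)$ must be matched to the event $\ind[d]$-independent copy, and this matching is where the argument has content.

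The main obstacle, then, is this lifting step: event-sort independence $\cu A'_C\ind[d]_{\cu C}\fo_\BB(\hat B C)$ is a statement purely about the Boolean/measure algebra, whereas extension for $\ind[d\BB]$ demands a genuine tuple $A'$ of random variables with the prescribed $T^R$-type over $BC$ whose \emph{generated} events realize that independent configuration. I would handle this by invoking saturation of $\cu N$ and quantifier elimination to realize, over $BC$, the type asserting that $A'$ has the same $L^R$-type as $A$ over $BC$ while forcing $\fo_\BB(A'C)$ to sit in $\ind[d]$-general position over $\cu C$ relative to $\fo_\BB(\hat B C)$; consistency of this type follows from the event-sort extension already secured, and realizability follows from saturation. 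Once extension is in hand, full existence drops out from the remarks cited above, completing the proof.
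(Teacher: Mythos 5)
Your overall strategy --- reduce to the stable event sort $(\cu E,\mu)$ and lift back to sort $\BK$ via quantifier elimination, fullness, and saturation --- is the same as the paper's, but you reverse the decomposition: the paper proves \emph{full existence} directly and gets extension for free from Remark \ref{r-fe-vs-ext}(i) (all of whose other hypotheses are already supplied by Lemma \ref{l-fB-basic}), whereas you prove \emph{extension} directly and get full existence for free from Remark \ref{r-fe-vs-ext}(iii). The formal reduction you cite is fine, but the direction you chose makes the ``real work'' strictly harder, and your direct argument for extension has a genuine gap at exactly the point you flag as the crux. By quantifier elimination, the condition $A'\equiv_{BC}A$ is controlled by the joint distribution of \emph{all} events $\l\theta(\bar{\bo a}',\bar{\bo b},\bar{\bo c})\rr$ with $\bar{\bo b}\in B^{<\BN}$, i.e.\ by $\fo_\BB(A'BC)$; this algebra is in general strictly larger than $\sigma(\fo_\BB(A'C)\cup\fo_\BB(BC))$, since ``cross events'' $\l\theta(\bo a',\bo b)\rr$ for genuinely binary $\theta$ need not lie in the $\sigma$-algebra generated by the two separate families. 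Consequently, the event-sort extension you invoke --- which only produces a copy $\cu A'$ of $\cu A_C=\fo_\BB(AC)$ over $\cu B_C\cu C$ --- does not by itself establish the consistency of the type asserting both $A'\equiv_{BC}A$ and $\fo_\BB(A'C)=\cu A'$: it says nothing about where the cross events must go, so the claim ``consistency of this type follows from the event-sort extension already secured'' is unjustified as written.

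The gap is repairable: extend the event-sort elementary map $\cu A_C\to\cu A'$ over $\cu B_C\cu C$ to an automorphism of the saturated model $(\cu E,\mu)$ fixing $\fo_\BB(BC)$, transport the entire algebra $\fo_\BB(ABC)$ along it, and then run the fullness-plus-saturation lifting against the images of \emph{all} the events $\l\theta(\bar{\bo a},\bar{\bo b},\bar{\bo c})\rr$, not just those of $\fo_\BB(AC)$. But note that the paper's order of attack avoids the issue entirely: for full existence one only needs $A'\equiv_C A$, and the type over $C$ alone is controlled exactly by $\fo_\BB(AC)=\cu A_C$, which is precisely the object that full existence for $\ind[d]$ in $(\cu E,\mu)$ relocates. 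That is why the paper takes full existence as primary; you should either adopt that order or explicitly carry out the automorphism-transport repair above.
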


\begin{proof} By Remarks \ref{r-fe-vs-ext} (1), it suffices to prove that $\ind[d\BB] \ \ $ satisfies  full existence.
For any $A,B,C$, we must show that there exists $A'\equiv_C A$ such that $A'\ind[d\BB]_C B$.
We may assume that $C\subseteq A$, so that $\cu A=\cu A_C$.  Since $\ind[d]$ has full existence in $(\cu E,\mu)$,
there exists $\cu A'\subseteq\cu E$ such that $\cu A'\equiv_{\cu C} \cu A$ and $\cu A'\ind[d]_{\cu C} \cu B_C$ in $(\cu E,\mu)$.
Note that every quantifier-free formula of $T^R$ with parameters in $\cu A\cup C$ has the form $f(\mu(\tau_1),\ldots,\mu(\tau_m))$
where $f\,:\,[0,1]^m\to[0,1]$ is continuous, and each $\tau_i$ is a Boolean term of the form
$$\tau_i(\l\theta_1(C)\rr,\ldots,\l\theta_n(C)\rr,\sa A_1,\ldots,\sa A_k)$$
with $\sa A_1,\ldots\sa A_k\in\cu A$.
By quantifier elimination, every formula of $T^R$ with parameters in $\cu A\cup C$ is equivalent to a formula of that form.
Therefore we have $\cu A'\equiv_C\cu A$ in $\cu N$.  Add a constant symbol to $\cu N$ for each $a\in A$, $\sa F\in\cu A$, and
$\sa F'\in\cu A'$. Add a set of variables $A'=\{\bo a'\mid \bo a\in A\}$.  Consider the set of conditions
$$ \Gamma=\{d_\BB(\sa F',\l\theta(\bar{\bo a}',\bar{\bo c})\rr)=0\mid \cu N\models d_\BB(\sa F,\l\theta(\bar{\bo a},\bar{\bo c})\rr)=0\}$$
with the set of variables $A'$.   It follows from fullness that every finite
subset of $\Gamma$ is satisfiable by some $A'$ in $\cu N$.  Then by  saturation, $\Gamma$ is satisfied by some $A'$ in $\cu N$.
By quantifier elimination we have $A\equiv_C A'$ in $\cu N$, and by definition, $A'\ind[d\BB]_C B$.
\end{proof}

\begin{lemma}  \label{l-fB-cbased}
$\ind[d\BB] \ \ $ is countably based.
\end{lemma}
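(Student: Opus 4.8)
The goal is to show that $\ind[d\BB]$ is countably based. By Lemma \ref{l-cbased}(3), it suffices to check three things: that $\ind[d\BB]$ has monotonicity, that it has two-sided countable character, and that for all countable $A, B$ the characterizing equivalence
$$A\ind[d\BB]_C B \Leftrightarrow (\forall^c C'\subseteq C)(\exists^c D\in[C',C]) A\ind[d\BB]_D B$$
holds. Monotonicity is already established in Lemma \ref{l-fB-basic}, and two-sided countable character follows from that lemma as well, since $\ind[d\BB]$ has both symmetry and finite character (hence countable character), and Remark \ref{r-two-sided} converts symmetry plus countable character into two-sided countable character. So the real content is the displayed equivalence for countable $A, B$.

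The plan is to unwind the definition $A\ind[d\BB]_C B \Leftrightarrow \cu A_C\ind[d]_{\cu C}\cu B_C$ in $(\cu E,\mu)$ and reduce everything to a countably-based-type statement about the dividing independence relation $\ind[d]$ in the stable theory $\APr$. First I would note that since $\APr$ is stable (Result \ref{f-probability-algebra}), its unique strict independence relation $\ind[d]$ has small local character (Result \ref{f-stable-indep}); in particular $\ind[d]$ has countably local character over $(\cu E,\mu)$. Combined with the basic axioms and symmetry that $\ind[d]$ enjoys, Proposition \ref{p-countably-local-implies} gives $\ind[d]\Rightarrow\ind[dc]$, and then $\ind[d]$ agrees with its countably based companion $\ind[dc]$, so $\ind[d]$ is itself countably based over $(\cu E,\mu)$. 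This is the key leverage: the event-sort dividing relation satisfies the displayed equivalence with the $\sigma$-algebra parameters.

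The remaining work is translating between the $\BK$-side parameter sets $C', D$ and the $\BB$-side $\sigma$-algebras $\cu C', \cu D$. For the forward direction, assume $A\ind[d\BB]_C B$, i.e. $\cu A_C\ind[d]_{\cu C}\cu B_C$, and fix a countable $C'\subseteq C$. Since $\ind[d]$ is countably based over $(\cu E,\mu)$ and $\cu A_C, \cu B_C$ are built from countable $A, B$ (together with $C$), one finds a countable $\sigma$-subalgebra $\cu D^{*}\in[\fo_\BB(C'),\cu C]$ witnessing independence of the relevant countable pieces. The subtlety is that such a $\cu D^{*}$ need not be of the form $\fo_\BB(D)$ for a countable $D\subseteq C$; to fix this I would invoke Result \ref{f-separable} and the perfect-witnesses property (Result \ref{f-perfectwitnesses}(2)) to realize each generator of $\cu D^{*}$, up to distance zero, as $\l a=b\rr$ for elements of $\cu K$, harvesting a countable $D\in[C',C]$ with $\fo_\BB(D)$ dense in $\cu D^{*}$, so that $\sigma(\fo_\BB(D))=\sigma(\cu D^{*})$ and the independence is preserved. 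Then $A\ind[d\BB]_D B$ as required. The reverse implication is the easy direction: given such $D$ for every $C'$, the relevant independence of $\cu A_C$ and $\cu B_C$ over $\cu C$ follows from collecting the witnesses and using that $\ind[d]$ is countably based, together with monotonicity.

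The main obstacle I anticipate is precisely this realization step — producing a genuine countable subset $D\subseteq C$ of the random-element sort whose first-order-definable events $\fo_\BB(D)$ generate (up to measure-zero, i.e.\ density in $d_\BB$) an arbitrary prescribed countable $\sigma$-subalgebra lying between $\fo_\BB(C')$ and $\cu C$. Everything else is bookkeeping: unwinding definitions, citing stability of $\APr$ to get that $\ind[d]$ is countably based, and transporting the displayed equivalence across the reduction. One should be careful that the $D$ produced lies in $[C',C]$ rather than merely generating the right $\sigma$-algebra, but since $\fo_\BB$ is monotone and $C\subseteq\cu K$ is closed under the harvesting operations guaranteed by fullness, choosing $D$ to contain $C'$ and consist of elements witnessing events of $\cu C$ keeps $D$ inside the interval $[C',C]$.
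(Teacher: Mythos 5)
Your reduction via Lemma \ref{l-cbased}(3), together with the verification of monotonicity and two-sided countable character from Lemma \ref{l-fB-basic} and Remark \ref{r-two-sided}, matches the paper's frame. But the core of your argument rests on an unjustified claim: that $\ind[d]$ over $(\cu E,\mu)$ is countably based. Proposition \ref{p-countably-local-implies} gives only the implication $\ind[d]\Rightarrow\ind[dc]\ $; the converse $\ind[dc]\ \Rightarrow\ind[d]$ is precisely the nontrivial half of being countably based, and you assert it with the phrase ``and then $\ind[d]$ agrees with its countably based companion.'' (The paper explicitly declines to prove that $\ind[d]$ is countably based, remarking at the end of its discussion of countably based relations that this ``will appear elsewhere.'') That missing implication is exactly what your ``easy direction'' consumes: to pass from ``for every countable $C'$ there is a countable $D\in[C',C]$ with $A\ind[d\BB]_D \ B$'' to $A\ind[d\BB]_C \ B$, you must show that non-dividing over all intermediate countable bases forces non-dividing over $\cu C$. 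The paper proves this contrapositively and concretely: if $\cu A_C\nind[d]_{\cu C}\cu B_C$, the witnessing formula $\Phi$ has finitely many parameters, hence can be written over $\cu B_{C'},\cu C'$ for a finite $C'\subseteq C$; and since a $\cu C$-indiscernible sequence is $\cu D$-indiscernible for every $\cu D\subseteq\cu C$, the formula divides over $\cu D$ for every $D\in[C',C]$, so $A\nind[d\BB]_D \ B$ for all such $D$. This finite-support-of-dividing argument is the actual content of the lemma and is absent from your proposal.

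Your forward direction also takes a detour that creates the realization problem you flag, and the proposed repair does not work: Result \ref{f-perfectwitnesses}(2) realizes an event as $\l a=b\rr$ for some $a,b\in\cu K$, with no guarantee that $a,b$ lie in $C$, so the harvested set need not belong to $[C',C]$. If you insist on working inside $(\cu E,\mu)$, the right observation is that every element of $\fo_\BB(C)$ already comes labelled as $\l\theta(\bar{\bo c})\rr$ with $\bar{\bo c}\in C^{<\BN}$, so you can harvest those parameters; but you would still have to manage the fact that enlarging $D$ enlarges $\cu A_D$ and $\cu B_D$ along with $\cu D$, which is not covered by base monotonicity alone. The paper avoids all of this by applying Corollary \ref{c-preserve} (that is, Proposition \ref{p-countably-local-implies}) directly to $\ind[d\BB]\ $ itself, which has symmetry, the basic axioms, and small local character by Lemmas \ref{l-fB-basic} and \ref{l-fB-local}; the countable base $D$ it produces is then automatically a subset of $C$ of sort $\BK$.
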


\begin{proof}
By Lemma \ref{l-cbased} (3), it is enough to check that, for countable $A$, $B$ and small $C$, we have
$$
A\ind[d\BB]_C \ B \Leftrightarrow (\forall^c C'\subseteq C)(\exists^c D\in [C',C])A\ind[d\BB]_D \ B.
$$
Fix such $A$, $B$, $C$.  The forward direction follows from Corollary \ref{c-preserve} (1).  For the other direction, suppose that $A\nind[d\BB]_C \ B$.
Then $\cu A_C\nind[d]_{\cu C} \ \cu B_C$ over $(\cu E,\mu)$.  Hence for some tuple $\bar A$ in $\cu A_C$ and some continuous formula
$\Phi(\bar X, \cu B_C,\cu C)$, $(\cu E,\mu)\models \Phi(\bar A,\cu B_C,\cu C)=0$ and $\Phi(\bar X,\cu B_C,\cu C)$ divides over $\cu C$.
Take a countable (even finite) $C'\subseteq C$ such that $\Phi(\bar x,\cu B_C,\cu C)=\Phi(\bar x,\cu B_{C'},\cu C')$.
Then for any countable $D\in[C',C]$, $\Phi(\bar x,\cu B_C,\cu C)$ divides over $\cu D$.  Therefore $\cu A_D\nind[d]_{\cu D} \cu B_D$ over $(\cu E,\mu)$,
so $A\nind[d\BB]_D \ B$.
\end{proof}

\begin{lemma}  \label{l-fB-local}
$\ind[d\BB] \ \ $ has small local character.
\end{lemma}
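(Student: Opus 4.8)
The plan is to transfer the small local character of $\ind[d]$ in the event sort $(\cu E,\mu)$ up to $\ind[d\BB]$. Since $\APr$ is stable (Result \ref{f-probability-algebra}), $\ind[d]$ over $(\cu E,\mu)$ has small local character by Result \ref{f-stable-indep}, and as the unique strict independence relation for $\APr$ it enjoys monotonicity, base monotonicity, and finite character. First I fix small sets $A,B$ and a small $C_0\subseteq B$ with $|C_0|\le|A|+\aleph_0$, and must produce $C\in[C_0,B]$ with $|C|\le|A|+\aleph_0$ and $A\ind[d\BB]_C B$, i.e.\ $\fo_\BB(AC)\ind[d]_{\fo_\BB(C)}\fo_\BB(BC)$. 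Because $C\subseteq B$ forces $\fo_\BB(BC)=\fo_\BB(B)$, the target reduces to $\fo_\BB(AC)\ind[d]_{\fo_\BB(C)}\fo_\BB(B)$.

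The naive attempt is to apply small local character of $\ind[d]$ once to $\fo_\BB(AC_0)$ and $\fo_\BB(B)$ over $\fo_\BB(C_0)$, obtain a small event base $\cu C'\subseteq\fo_\BB(B)$, and take $C$ to be $C_0$ together with the finitely many elements of $B$ naming each event of $\cu C'$. This gives $\fo_\BB(AC_0)\ind[d]_{\fo_\BB(C)}\fo_\BB(B)$, but $\fo_\BB(AC)$ contains extra ``mixed'' events $\l\theta(\bar{\bo a},\bar{\bo c})\rr$ whose parameters $\bar{\bo c}$ range over the newly added elements of $C\setminus C_0$, and these are not controlled. This is the main obstacle, and I resolve it by iterating $\omega$ times so that the added parameters are eventually absorbed.

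Concretely, I would build an increasing chain $C_0=C^0\subseteq C^1\subseteq\cdots\subseteq B$ with each $|C^n|\le|A|+\aleph_0$. Given $C^n$, apply small local character of $\ind[d]$ to $\cu A=\fo_\BB(AC^n)$ and $\cu B=\fo_\BB(B)$ over $\cu C_0=\fo_\BB(C^n)$: here $\cu C_0\subseteq\cu A$ since $C^n\subseteq AC^n$, so the cardinality hypothesis is automatic, and $\cu C_0\subseteq\cu B$ since $C^n\subseteq B$. This yields $\cu C'^{\,n+1}\in[\fo_\BB(C^n),\fo_\BB(B)]$ with $|\cu C'^{\,n+1}|\le|\cu A|+\aleph_0\le|A|+\aleph_0$ and $\fo_\BB(AC^n)\ind[d]_{\cu C'^{\,n+1}}\fo_\BB(B)$. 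Let $C^{n+1}$ be $C^n$ together with the elements of $B$ appearing in fixed $\fo_\BB$-representations of the events of $\cu C'^{\,n+1}$; then $C^n\subseteq C^{n+1}\subseteq B$, $|C^{n+1}|\le|A|+\aleph_0$, and $\cu C'^{\,n+1}\subseteq\fo_\BB(C^{n+1})$. Set $C=\bigcup_n C^n$, so $C\in[C_0,B]$ and $|C|\le|A|+\aleph_0$.

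Finally I verify $A\ind[d\BB]_C B$. Since $\cu C'^{\,n+1}\subseteq\fo_\BB(C)\subseteq\fo_\BB(B)$, base monotonicity of $\ind[d]$ upgrades each $\fo_\BB(AC^n)\ind[d]_{\cu C'^{\,n+1}}\fo_\BB(B)$ to $\fo_\BB(AC^n)\ind[d]_{\fo_\BB(C)}\fo_\BB(B)$. As every event of $\fo_\BB(AC)$ uses only finitely many parameters from $C=\bigcup_n C^n$, we have $\fo_\BB(AC)=\bigcup_n\fo_\BB(AC^n)$, an increasing union; hence every finite subset of $\fo_\BB(AC)$ lies in some $\fo_\BB(AC^n)$ and, by monotonicity, is $\ind[d]$-independent from $\fo_\BB(B)$ over $\fo_\BB(C)$. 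Finite character of $\ind[d]$ then gives $\fo_\BB(AC)\ind[d]_{\fo_\BB(C)}\fo_\BB(B)$, which is exactly $A\ind[d\BB]_C B$. The containment and cardinality requirements on $C$ hold by construction, so $\ind[d\BB]$ has small local character.
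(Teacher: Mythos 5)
Your proof is correct and follows essentially the same route as the paper's: iterate the small local character of $\ind[d]$ in $(\cu E,\mu)$ along an increasing $\omega$-chain $C_0\subseteq C_1\subseteq\cdots\subseteq B$ so that the ``mixed'' events introduced at each stage are absorbed at the next, then take $C=\bigcup_n C_n$ and conclude via $\fo_\BB(AC)=\bigcup_n\fo_\BB(AC^n)$ and finite character of $\ind[d]$. The only (minor) divergence is at the limit stage, where you upgrade each $\fo_\BB(AC^n)\ind[d]_{\cu C'^{\,n+1}}\fo_\BB(B)$ to base $\fo_\BB(C)$ directly by base monotonicity, whereas the paper invokes the countable union property of $\ind[d]$ (Lemma \ref{l-d-union}); both are valid.
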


\begin{proof}
By Result \ref{f-probability-algebra}, $\ind[d]$ on $(\cu E,\mu)$ has small local character.
To prove that $\ind[d\BB] \ \ $ has small local character, let $A, B$ be small subsets of $\cu K$.
Then $\cu A, \cu B$ are small subsets of $\cu E$.  Let $C_0$ be a subset of $B$ with $|C_0|\le|A|+\aleph_0$.
We show by induction that there is a countable chain $C_0\subseteq C_1\subseteq\cdots$ of subsets of $B$ such that $\cu A_{\cu C_n}\ind[d]_{\cu C_{n+1}} \cu B$
and $|C_n|\le |A|+\aleph_0$ for each $n\in\BN$.   Suppose we have  sets $C_0\subseteq\cdots\subseteq C_n$ of $B$ such that
$\cu A_{\cu C_m}\ind[d]_{\cu C_{m+1}} \cu B$ and $|C_m|\le|A|+\aleph_0$ whenever $m<n$.
(This holds vacuously when $n=0$).  Then $\cu A_{\cu C_n}\subseteq\cu E$.
By small local character for $\ind[d]$ on $(\cu E,\mu)$,
there is a  set $\cu D\subseteq\cu B$ such that $\cu A_{\cu C_n}\ind[d]_{\cu D} \cu B$ and $|\cu D|\le|A|+\aleph_0$.  There is also a  set $C_{n+1}\in[C_n,B]$
such that $\cu D\subseteq\cu C_{n+1}$ and $|C_{n+1}|\le |A|+\aleph_0$.  By base monotonicity, we have $\cu A_{\cu C_n}\ind[d]_{\cu C_{n+1}} \cu B$.  This completes the induction.

Now let $C=\bigcup_n C_n$.  Then $C\subseteq B$ and $|C|\le|A|+\aleph_0$.  We have $\cu C=\bigcup_n\cu C_n$, $\cu A_{\cu C}=\bigcup_n\cu A_{\cu C_n}$, and $\cu B=\cu B_{\cu C}$.
By monotonicity for $\ind[d]$,  $\cu A_{\cu C_n}\ind[d]_{\cu C_k} \cu B$ whenever $n<k$.  By Lemma \ref{l-d-union},
$\ind[d]$ has the countable union property.  Therefore
$\cu A_{\cu C_n}\ind[d]_{\cu C} \cu B$ for each $n$.  Then by finite character for $\ind[d]$, $\cu A_{\cu C}\ind[d]_{\cu C}\cu B$, so $A\ind[d\BB]_C \ B.$
\end{proof}

%% This corollary is upgraded.
\begin{cor}  \label{c-fB-indep}
In $T^R$, $\ind[d\BB] \ \ $ is an independence relation, and $\ind[d]\Rightarrow\ind[d\BB] \ $.
\end{cor}

\begin{proof}  By Lemmas \ref{l-fB-basic}, \ref{l-fB-existence}, and \ref{l-fB-local}, $\ind[d\BB] \ \ $ is an independence relation.
By Remark 5.1 of [Ad2], $\ind[d]$ implies any independence relation in a first order theory, and the proof carries over to continuous theories,
so $\ind[d]\Rightarrow\ind[d\BB] \ $ in $T^R$.
\end{proof}

\begin{prop}  \label{p-ind[fB]-not-antireflexive}
If $T$ has an infinite model, then $\ind[d\BB] \ \ $ is not anti-reflexive.
\end{prop}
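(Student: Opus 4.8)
The plan is to exhibit a single counterexample to anti-reflexivity over the empty base. Recall that anti-reflexivity asserts $\bo a\ind[d\BB]_B\bo a\Rightarrow\bo a\in\acl(B)$, so it suffices to produce an element $\bo a\in\cu K$ with $\bo a\ind[d\BB]_\emptyset\bo a$ but $\bo a\notin\acl(\emptyset)$. Since $T$ has an infinite model, its big model $\cu M$ has cardinality $\upsilon$ while $\acl^{\cu M}(\emptyset)$ is small, so I may fix an element $m\in M\setminus\acl^{\cu M}(\emptyset)$. I would then let $\bo a=\ti m$ be its image under the embedding $b\mapsto\ti b$ from Lemma \ref{l-big-embedding}(i).

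First I would check that $\ti m\ind[d\BB]_\emptyset\ti m$. Unwinding the definition, this means $\fo_\BB(\{\ti m\})\ind[d]_{\fo_\BB(\emptyset)}\fo_\BB(\{\ti m\})$ in $(\cu E,\mu)$. The point is that $\ti m$ almost surely realizes the single complete type $\tp(m)$: for every formula $\theta(v)$ of $L$ exactly one of $\theta(m)$, $\neg\theta(m)$ holds in $\cu M$, so by the defining property of the embedding either $\mu(\l\theta(\ti m)\rr)=1$ or $\mu(\l\theta(\ti m)\rr)=0$. Hence every generator of $\fo_\BB(\{\ti m\})$ has measure $0$ or $1$, so $\sigma(\fo_\BB(\{\ti m\}))$ is the trivial algebra. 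Since a trivial algebra is probabilistically independent from every algebra over the (also trivial) base $\fo_\BB(\emptyset)$, the relation $\ti m\ind[d\BB]_\emptyset\ti m$ holds by the characterization of $\ind[d]$ in Result \ref{f-probability-algebra}.

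Next I would verify $\ti m\notin\acl(\emptyset)$. By Results \ref{f-acl=dcl} and \ref{f-separable}, $\acl(\emptyset)=\dcl(\emptyset)=\fo(\emptyset)$, so it is enough to show $\ti m\notin\fo(\emptyset)$. If $\ti m\in\fo(\emptyset)$, there would be a functional formula $\varphi(u)$ of $L$ with $\l\varphi(\ti m)\rr=\top$, i.e.\ $\mu(\l\varphi(\ti m)\rr)=1$; by the previous paragraph this forces $\cu M\models\varphi(m)$, and since $\varphi$ is functional $m$ is its unique solution, whence $m\in\dcl^{\cu M}(\emptyset)\subseteq\acl^{\cu M}(\emptyset)$, contradicting the choice of $m$. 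Thus $\ti m$ witnesses the failure of anti-reflexivity.

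The content of the argument is entirely in choosing the witness correctly, so I do not expect a serious obstacle. The only point requiring care is the observation that almost-sure realization of a single complete type collapses the first order definable events over $\ti m$ to a trivial $\sigma$-algebra; this is precisely what makes $\ind[d\BB]$ reflexive on such an element, even though algebraicity in $\cu N$ still detects the non-algebraicity of $m$ in $\cu M$.
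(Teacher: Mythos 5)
Your proof is correct and takes essentially the same approach as the paper: both choose a non-definable element of $\cu M$, lift it to an element of $\cu K$ that almost surely realizes its type (so that its first order definable events form a trivial algebra, giving $\ind[d\BB]$-independence from itself over $\emptyset$), and then check it is not algebraic in $\cu N$. The only cosmetic difference is that you verify $\ti m\notin\acl(\emptyset)$ directly via $\acl(\emptyset)=\dcl(\emptyset)=\fo(\emptyset)$, whereas the paper routes through pointwise definability and Result \ref{f-dcl3}; both are valid.
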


\begin{proof}
Since $\cu M$ is large, it has an element $a$ that is not definable over $\emptyset$.  By fullness and saturation, there is an element $\bo a\in\cu K$
such that $\mu(\l\varphi({\bo a})\rr)=1$ whenever $\cu M\models \varphi(a)$, so
$\fo_\BB({\bo a})=\sigma(\emptyset)$.   Hence
${\bo a} \ind[d\BB]_{\emptyset}{\bo a}$.  But $\bo a\notin\dcl^\omega(\emptyset)$.  So ${\bo a}\notin\dcl(\emptyset)=\acl(\emptyset)$ by Results \ref{f-acl=dcl} and\ref{f-dcl3},
so $\ind[d\BB]$ \ \ is not anti-reflexive.
 \end{proof}

\subsection{Algebraic Independence in the Event Sort}

\begin{df}  \label{d-event-indep}
For small $A, B, C\subseteq \cu K$, define
 $$A\ind[a\BB]_C \ \ B\Leftrightarrow\acl_\BB(AC)\cap\acl_\BB(BC)=\acl_\BB(C).$$
\end{df}

\begin{rmk}  \label{r-ind-aB}
$A\ind[a\BB]_C \ \ B$ in $\cu N\Leftrightarrow \cu A_C\ind[a]_{\cu C} \cu B_C$ in $(\cu E,\mu)$.
\end{rmk}

\begin{lemma}  \label{l-ind[fB]-implies-ind[aB]}
$\ind[d\BB] \ \Rightarrow \ind[a\BB] \ .$
\end{lemma}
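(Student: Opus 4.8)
The plan is to reduce the claim entirely to a statement about the event sort $(\cu E,\mu)$ and then read off the conclusion from the hierarchy of independence relations already assembled for $\APr$. By the definition of $\ind[d\BB] \ $ together with Remark \ref{r-ind-aB}, for all small $A,B,C\subseteq\cu K$ we have
$$A\ind[d\BB]_C \ B\Leftrightarrow \cu A_C\ind[d]_{\cu C}\cu B_C \mbox{ in } (\cu E,\mu),\qquad A\ind[a\BB]_C \ \ B\Leftrightarrow \cu A_C\ind[a]_{\cu C}\cu B_C \mbox{ in }(\cu E,\mu).$$
Thus the implication $\ind[d\BB]\Rightarrow\ind[a\BB] \ $ is exactly an instance of the implication $\ind[d]\Rightarrow\ind[a]$ read as ternary relations over the probability algebra $(\cu E,\mu)$, and it suffices to establish the latter as a free-standing fact about any model of $\APr$.

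To prove $\ind[d]\Rightarrow\ind[a]$ over $(\cu E,\mu)$, I would argue as follows. By Result \ref{f-probability-algebra} the theory $\APr$ is stable, so by Result \ref{f-stable-indep} the relation $\ind[d]$ is the unique strict independence relation for $\APr$. In particular $\ind[d]$ satisfies finite character (hence countable character) and anti-reflexivity, so it is a strict \emph{countable} independence relation. Result \ref{f-weakest}(3) then gives that $\thind$ is the weakest strict countable independence relation for $\APr$, whence $\ind[d]\Rightarrow\thind$. Combining this with the universally valid chain $\thind\Rightarrow\ind[M] \ \Rightarrow\ind[a]$ recorded just after Definition \ref{d-special-ind} yields $\ind[d]\Rightarrow\ind[a]$, which is precisely what is required.

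I do not anticipate a serious obstacle: the argument is bookkeeping through results already in place. The two points needing a moment's care are verifying that $\ind[d]$ qualifies as a \emph{strict countable} independence relation, so that Result \ref{f-weakest}(3) genuinely applies (this is immediate, since finite character implies countable character), and confirming that the translation via Remark \ref{r-ind-aB} is legitimate for the particular sets $\cu A_C,\cu B_C,\cu C$ in play, where one uses $\cu C\subseteq\cu A_C$ and $\cu C\subseteq\cu B_C$.

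If a self-contained argument is preferred over the rosy-theory machinery, the same implication $\ind[d]\Rightarrow\ind[a]$ can be obtained directly from the probabilistic characterization of $\ind[d]$ in Result \ref{f-probability-algebra}. Given $\sa E\in\sigma(\cu A_C)\cap\sigma(\cu B_C)$, conditional independence over $\sigma(\cu C)$ applied with $\sa A=\sa B=\sa E$ forces $\mu[\sa E | \sigma(\cu C)]=\mu[\sa E | \sigma(\cu C)]^2$ almost surely, so $f:=\mu[\sa E | \sigma(\cu C)]$ is $\{0,1\}$-valued; then $\sa E$ agrees almost surely with the $\sigma(\cu C)$-measurable event $\l f=1\rr$, giving $\sa E\in\sigma(\cu C)=\acl_\BB(C)$. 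This yields $\acl_\BB(AC)\cap\acl_\BB(BC)=\acl_\BB(C)$ directly, i.e. $A\ind[a\BB]_C \ \ B$.
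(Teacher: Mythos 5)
Your proposal is correct and follows the same route as the paper: reduce via the definitions and Remark \ref{r-ind-aB} to the implication $\ind[d]\Rightarrow\ind[a]$ over $(\cu E,\mu)$, and obtain that implication from the stability of $\APr$ (Result \ref{f-probability-algebra}). The paper treats the latter implication as immediate from stability, whereas you supply two valid justifications for it (the chain $\ind[d]\Rightarrow\thind\Rightarrow\ind[M]\ \Rightarrow\ind[a]$ via Results \ref{f-stable-indep} and \ref{f-weakest}, and the direct conditional-expectation computation showing $\mu[\sa E\mid\sigma(\cu C)]$ is $\{0,1\}$-valued); both are sound.
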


\begin{proof}  By Result \ref{f-probability-algebra}, the theory of $(\cu E,\mu)$ is stable, so
$\ind[d]\Rightarrow \ind[a]$ in $(\cu E,\mu)$.  It follows at once that $\ind[d\BB] \ \Rightarrow\ind[a\BB] \ $ in $\cu N$.
\end{proof}

\begin{prop} \label{p-event-aB}
The relation $\ind[a\BB] \ \ $ over $\cu N$ satisfies all the axioms for a countable independence relation except base monotonicity.
It also has symmetry, the countable union property, and countably local character.
\end{prop}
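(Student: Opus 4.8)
The plan is to read off almost all of the properties by transferring them, via Remark~\ref{r-ind-aB}, to the algebraic independence relation $\ind[a]$ on the event sort $(\cu E,\mu)$ (which is a stable theory), where Proposition~\ref{p-alg-indep} already supplies most of the axioms, and by exploiting the domination $\ind[d\BB]\Rightarrow\ind[a\BB]$ established in Lemma~\ref{l-ind[fB]-implies-ind[aB]}. Throughout I would write $A\ind[a\BB]_C B$ as the equality $\acl_\BB(AC)\cap\acl_\BB(BC)=\acl_\BB(C)$ of $\sigma$-subalgebras of $(\cu E,\mu)$ (using Remark~\ref{r-ind-B}), and I would freely use that $A\mapsto\acl_\BB(A)$ is monotone and has countable character, since each event of $\fo_\BB(A)$ is first order definable over a finite subtuple of $A$.

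First I would dispose of the routine axioms. Invariance is immediate because $\acl_\BB$ is preserved by automorphisms of $\cu N$; monotonicity and symmetry are immediate from the defining equation, which is monotone in each side and symmetric in $A,B$. Normality is in fact automatic, since $\fo_\BB(ACC)=\fo_\BB(AC)$ gives $\acl_\BB(ACC)=\acl_\BB(AC)$, whence $AC\ind[a\BB]_C B\Leftrightarrow A\ind[a\BB]_C B$. Transitivity I would verify by a short lattice computation: given $C\in[D,B]$ with $B\ind[a\BB]_C A$ and $C\ind[a\BB]_D A$, one has $\acl_\BB(B)\cap\acl_\BB(AC)=\acl_\BB(C)$ and $\acl_\BB(C)\cap\acl_\BB(AD)=\acl_\BB(D)$; then, using $\acl_\BB(AD)\subseteq\acl_\BB(AC)$ and $\acl_\BB(D)\subseteq\acl_\BB(B)$, intersecting yields $\acl_\BB(B)\cap\acl_\BB(AD)=\acl_\BB(D)$, i.e.\ $B\ind[a\BB]_D A$. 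Countable character follows from the countable character of $\acl_\BB$, exactly as for $\ind[a]$ in Proposition~\ref{p-alg-indep}.

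Full existence, extension, and the local character clauses I would obtain by domination. Since $\ind[d\BB]\Rightarrow\ind[a\BB]$ and $\ind[d\BB]$ has full existence (Lemma~\ref{l-fB-existence}) and small local character (Lemma~\ref{l-fB-local}), Remark~\ref{r-weaker} transfers full existence and small (hence countably) local character to $\ind[a\BB]$. Extension then follows from Remark~\ref{r-fe-vs-ext}(i), as $\ind[a\BB]$ now has invariance, monotonicity, transitivity, normality, symmetry, and full existence; this also gives local character.

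The one genuinely delicate point is the countable union property. It does not transfer from $\ind[d\BB]$, since the countable union property is not among the properties preserved under passage to a weaker relation, and it cannot be read off the countable union property of $\ind[a]$ in $(\cu E,\mu)$ in a single step, because enlarging the base $C_n$ simultaneously enlarges both sides $\acl_\BB(AC_n)$ and $\acl_\BB(BC_n)$. My plan is to reduce it, via Remark~\ref{r-ind-aB} and the identity $\fo_\BB(AC)=\bigcup_n\fo_\BB(AC_n)$, to the measure-theoretic assertion that for the increasing sequences $\cu F_n=\acl_\BB(AC_n)\uparrow\cu F=\acl_\BB(AC)$, $\cu G_n=\acl_\BB(BC_n)\uparrow\cu G=\acl_\BB(BC)$ with $\cu F_n\cap\cu G_n=\cu H_n=\acl_\BB(C_n)$, one recovers $\cu F\cap\cu G=\sigma(\bigcup_n\cu H_n)=\acl_\BB(C)$. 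I expect this coupled limit statement to be the main obstacle: the key feature I would try to exploit is that the base algebras $\acl_\BB(C_n)$ exhaust exactly the information that the common parameters $C$ contribute to each side, and I would attempt to turn this into a proof using the countable union property of $\ind[a]$ on the stable sort $(\cu E,\mu)$ together with a density/martingale argument for the metric closures in Result~\ref{f-separable}.
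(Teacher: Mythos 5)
Everything in your proposal except the countable union property is correct and runs along essentially the same lines as the paper: the paper likewise declares symmetry, invariance, monotonicity, normality and countable character to be clear, handles transitivity by the same sort of computation (it points to the argument for $\ind[d\BB]\ $ in Lemma \ref{l-fB-basic}; your direct lattice calculation is a fine substitute), and obtains full existence, extension and the local character clauses exactly as you do, from $\ind[d\BB]\ \Rightarrow\ind[a\BB]\ $ together with Lemmas \ref{l-fB-existence} and \ref{l-fB-local} and Remarks \ref{r-weaker} and \ref{r-fe-vs-ext}.

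The genuine gap is the countable union property, which you leave as a plan rather than a proof. Your diagnosis of the difficulty is exactly right, and the martingale/density strategy you sketch will not close it: the hypothesis $\acl_\BB(AC_n)\cap\acl_\BB(BC_n)=\acl_\BB(C_n)$ carries no modulus, so an event that is $\varepsilon$-close to both $\acl_\BB(AC_n)$ and $\acl_\BB(BC_n)$ need not be close to $\acl_\BB(C_n)$, and that uniformity is precisely what any limiting argument needs. (The paper gives you nothing to imitate here; it simply lists the countable union property among the properties that ``are clear.'') Worse, the coupled limit statement you isolate appears to be false. Take independent events $Z,X_1,X_2,\dots$ of measure $1/2$, an event $Y\in\sigma(X_1,X_2,\dots)$ with $0<\mu(Y\mid\sa P)<1$ on every atom $\sa P$ of every $\sigma(X_1,\dots,X_n)$, and $W=Z\triangle Y$. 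Comparing traces on atoms shows $\sigma(Z,X_1,\dots,X_n)\cap\sigma(W,X_1,\dots,X_n)=\sigma(X_1,\dots,X_n)$ for every $n$, yet $Z=W\triangle Y$ lies in $\sigma(Z,X_1,\dots)\cap\sigma(W,X_1,\dots)$ but not in $\sigma(X_1,\dots)$. For $T$ the theory of an infinite set one can realize $Z$, $W$ and the $X_i$ as events $\l \bo a=\bo b\rr$ for pairs of random elements chosen so that all cross-equalities are $\bot$, making $\fo_\BB$ of the relevant parameter sets generate exactly these algebras; this turns the configuration into an apparent counterexample to the countable union property of $\ind[a\BB]\ $ itself. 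So rather than trying to complete the martingale argument, you should either locate the extra structural input the authors have in mind or treat this clause as requiring repair; as it stands, it cannot be established by the route you outline.
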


\begin{proof}  Symmetry, invariance, monotonicity, normality, countable character, and the countable union property are clear.
The proof for transitivity is the same as in Proposition \ref{l-fB-basic}.
Full existence, extension, and small local character follow from Lemmas \ref{l-fB-existence}, \ref{l-fB-local}, and  \ref{l-ind[fB]-implies-ind[aB]}.
\end{proof}

\section{Pointwise Independence}  \label{s-pointwise}

\subsection{The General Case}

\begin{df}  \label{d-pointwise}
If $\ind[I]$ is a ternary relation over $\cu M$ that has monotonicity, we let $\ind[I \omega]$ \quad
be the unique countably based relation over $\cu N$ such that for all countable $A,B,C$,
$$ A\ind[I \omega]_C \  B\Leftrightarrow A(\omega)\ind[I]_{C(\omega)} B(\omega) \ \as.$$
The unique existence of $\ind[I \omega]$ \quad follows from Lemma \ref{l-cbased} (2).
We say that \emph{$A$ is pointwise $I$-independent from $B$ over $C$} if $A\ind[I \omega]_C \ B$.
\end{df}

We will often use the notation $\l P\rr$ for the set $\{\omega\in\Omega\mid P(\omega)\}$
when $P(\omega)$ is a statement involving elements $\omega$ of $\Omega$.
Since $(\Omega,\cu F,\mu)$ is a complete probability space,  $P(\omega)$ holds  $\as$
if and only if $\mu(\l P \rr)=1$.
For instance, if $\ind[I]$ is a ternary relation over $\cu M$, then for all countable sets $A,B,C\subseteq\cu K$,
$$\l A\ind[I]_C B\rr=\{\omega\in\Omega \, : \, A(\omega)\ind[I]_{C(\omega)} \ B(\omega)\},$$
and
$$ A\ind[I \omega]_C \  B\Leftrightarrow\mu(\l A\ind[I]_C B\rr)=1.$$

\begin{cor} \label{c-pointwise-implies}
If $\ind[I]$ and $\ind[J]$ are  ternary relations over $\cu M$ with monotonicity, and $\ind[I]\Rightarrow\ind[J]$, then
$\ind[I \omega] \ \ \Rightarrow \ind[J \omega] \ \ $.
\end{cor}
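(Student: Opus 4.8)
The plan is to reduce the claim to the case of countable sets and then propagate it to all small sets via the countably based property. By Definition \ref{d-pointwise}, both $\ind[I\omega]$ \ and $\ind[J\omega]$ \ are countably based, so Lemma \ref{l-cbased} (1) applies: it suffices to verify the implication $A\ind[I\omega]_C \ B \Rightarrow A\ind[J\omega]_C \ B$ for all \emph{countable} $A, B, C$, after which the implication holds automatically for all small $A, B, C$.

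So fix countable $A, B, C \subseteq \cu K$ and assume $A\ind[I\omega]_C \ B$. By the defining clause of Definition \ref{d-pointwise}, this is exactly the statement that $A(\omega)\ind[I]_{C(\omega)} B(\omega)$ holds $\as$; that is, there is a set $\sa A\in\cu E$ with $\mu(\sa A)=1$ on which the fibers $A(\omega), B(\omega), C(\omega)$ satisfy $\ind[I]$ in $\cu M$. The key step is then to apply the hypothesis $\ind[I]\Rightarrow\ind[J]$ fiberwise: on this same measure-one set $\sa A$, the pointwise implication $A(\omega)\ind[I]_{C(\omega)} B(\omega) \Rightarrow A(\omega)\ind[J]_{C(\omega)} B(\omega)$ yields $A(\omega)\ind[J]_{C(\omega)} B(\omega)$, so that $A(\omega)\ind[J]_{C(\omega)} B(\omega)$ holds $\as$ as well. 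By Definition \ref{d-pointwise} again, this is precisely $A\ind[J\omega]_C \ B$, which finishes the countable case.

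I do not anticipate any genuine obstacle, as the argument is a direct combination of the defining almost-everywhere equivalence with Lemma \ref{l-cbased} (1). The only point requiring a moment's care is that the implication $\ind[I]\Rightarrow\ind[J]$ is invoked on a \emph{fixed} measure-one event, so no additional null sets are introduced and the conclusion remains a valid almost-everywhere statement; this is what permits the passage from the $\ind[I]$-fiberwise statement to the $\ind[J]$-fiberwise statement without loss of measure.
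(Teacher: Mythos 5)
Your proposal is correct and follows essentially the same route as the paper, which simply cites Lemma \ref{l-cbased}~(1); you have merely written out the (immediate) fiberwise step for countable sets that the paper leaves implicit. The one cosmetic slip is writing $\sa A\in\cu E$ where the paper's definition of ``$\as$'' uses a set in $\cu F$, but this changes nothing.
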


\begin{proof}  This follows from Lemma \ref{l-cbased} (1).
\end{proof}

\begin{df}  A ternary relation $\ind[J]$ over $\cu N$ will be called \emph{pointwise anti-reflexive} if $\bo a\ind[J]_C \bo a$ implies $\bo a\in\acl^\omega(C)$.
\end{df}

Note that every anti-reflexive relation over $\cu N$ is pointwise anti-reflexive.

%%  Following result is corrected.
\begin{prop}  \label{p-omega-cbased}
Suppose $\ind[I]$ is a  ternary relation over $\cu M$ that has monotonicity.
\begin{enumerate}
\item  $\ind[I \omega]$ \ \ is countably based and has monotonicity and two-sided countable character.
\item If $\ind[J]=\ind[Ic] \ $, then $\ind[J\omega] \ =\ind[I\omega] \ $.
\item If $\ind[I]$ has invariance,  transitivity, normality, symmetry, or the countable union property,
then  $\ind[I \omega]$ \ \ has the same property.  If $\ind[I]$ has
 normality, symmetry, and base monotonicity, then $\ind[I\omega] \ \ $ has base monotonicity.
\item If $\ind[I]$ is anti-reflexive, then $\ind[I\omega] \ \ $ is pointwise anti-reflexive.
\end{enumerate}
\end{prop}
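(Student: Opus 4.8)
The plan is to verify each of the three parts by reducing everything, via the countably based machinery of Section 4, to the corresponding statements about $\ind[I]$ on countable sets of $\cu M$, where they hold pointwise almost everywhere. The key tool throughout is Definition \ref{d-pointwise} together with Lemma \ref{l-cbased}: once we establish a property of $\ind[I\omega]$ for all \emph{countable} $A,B,C$, the fact that $\ind[I\omega]$ is countably based lets us transfer it to all small sets by Proposition \ref{p-preserve} and Lemma \ref{l-cbased} (4).

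\medskip

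For part (1), countably basedness is immediate from Definition \ref{d-pointwise}, which \emph{defines} $\ind[I\omega]$ to be the unique countably based relation agreeing with the pointwise condition on countable sets; monotonicity and two-sided countable character then follow from Lemma \ref{l-cbased} (3), since any countably based relation enjoys these. So the substance of the proposition lies in (2) and (3). For part (2), I would argue property by property. For each of invariance, base monotonicity, transitivity, normality, and symmetry, I first check the property \emph{for countable $A,B,C$} directly from the defining equivalence $A\ind[I\omega]_C B \Leftrightarrow A(\omega)\ind[I]_{C(\omega)}B(\omega)\ \as$. The crucial observation is that each of these properties, at the level of $\cu M$, is a pointwise condition: e.g.\ if $A(\omega)\ind[I]_{C(\omega)}B(\omega)$ holds a.s.\ and $\ind[I]$ has symmetry on $\cu M$, then $B(\omega)\ind[I]_{C(\omega)}A(\omega)$ holds a.s., giving symmetry for $\ind[I\omega]$ on countable sets. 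One must take care that operations like $(AC)(\omega)=A(\omega)C(\omega)$ and the membership conditions $C\in[D,B]$ pass correctly to the fibers $\omega$, but these are routine. Having verified each property for countable sets, I then invoke Proposition \ref{p-preserve} (which states exactly that a countably based relation inheriting these properties on countable sets inherits them on all small sets) to conclude. For the countable union property, since it is a statement purely about countable sets, it follows directly once verified on the fibers using that $\ind[I]$ has the countable union property on $\cu M$.

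\medskip

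For part (3), I would unwind the definition of pointwise anti-reflexivity. Suppose $\bo a\ind[I\omega]_C \bo a$; by two-sided countable character (part (1)) it suffices to work with a countable $C_0\subseteq C$, reducing to the case where $C$ is countable and $\bo a$ is a single element. Then by Definition \ref{d-pointwise}, $a(\omega)\ind[I]_{C(\omega)}a(\omega)$ holds a.s., and since $\ind[I]$ is anti-reflexive on $\cu M$ this gives $a(\omega)\in\acl^{\cu M}(C(\omega))$ a.s., i.e.\ $\mu(\l a\in\acl^{\cu M}(C_0)\rr)=1$. By Definition \ref{d-pointwise-def} this is precisely the statement $\bo a\in\acl^\omega(C)$, as required.

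\medskip

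\textbf{The main obstacle} I anticipate is the bookkeeping in part (2) needed to pass from small sets back to countable ones in a way compatible with the fiberwise conditions --- in particular, ensuring that the countable witnesses produced for properties like base monotonicity and transitivity (where an intermediate base $C\in[D,B]$ or $C\in[D,B]$ appears) can be chosen so that the relevant measure-one events are preserved under the set-theoretic operations. Fortunately, this is exactly the content that Proposition \ref{p-preserve} was designed to encapsulate, so the real work reduces to the clean countable-set verifications, and the transfer to small sets is handled abstractly.
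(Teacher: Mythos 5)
Your proposal follows essentially the same route as the paper: part (1) is immediate from the definition and Lemma \ref{l-cbased} (3); part (2) is done by verifying each property fiberwise for countable sets and then transferring to small sets via the countably-based machinery (the paper cites Lemma \ref{l-cbased} (4) where you cite Proposition \ref{p-preserve}, but these amount to the same thing since $\ind[I\omega]$ is countably based); and part (3) reduces to a countable base and applies anti-reflexivity of $\ind[I]$ almost everywhere. One small correction in part (3): the reduction to a countable $D\subseteq C$ with $\bo a\ind[I\omega]_D \bo a$ does not follow from two-sided countable character (which concerns the left and right arguments, not the base); it follows from $\ind[I\omega]$ being countably based, by taking $A'=B'=\{\bo a\}$ and $C'=\emptyset$ in the defining equivalence --- with that citation fixed, your argument matches the paper's.
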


\begin{proof}  (1) By Definition \ref{d-pointwise}, $\ind[I \omega]$ \ \ is countably based.  It has monotonicity and two-sided countable character by Lemma \ref{l-cbased} (3).

(2)  This follows from the fact that $\ind[I]$ and $\ind[J]$ agree on countable sets.

(3) By (2), Proposition \ref{p-preserve}, and Remark \ref{r-union-cbased}, it suffices to show that if $\ind[I]$ has one of the listed properties for countable sets, then
$\ind[I \omega]$ \ \ has the same property for countable sets.

 We prove the result for transitivity.  The other proofs are similar but easier.
 Suppose $\ind[I]$ has transitivity for countable sets, and assume that $A,B,C,D$
 are countable and $B\ind[I \omega]_C \  A$, $C\ind[I \omega]_D \  A$ and $C\in[D,B]$.
We must prove $B\ind[I \omega]_D \ A$. We have $\mu(\l B\ind[I]_C \ A\rr)=1$, $\mu(\l C\ind[I]_D \ A\rr)=1$, and $\mu(\l C\in [D,B]\, \rr)=1$.  Since
$\ind[I]$ has transitivity for countable sets, $\mu(\l B\ind[I]_D \ A\rr)=1$.  This shows that $B\ind[I \omega]_D  \ A$.

(4)  Suppose $\ind[I]$ is anti-reflexive and $\bo a\ind[I\omega]_C \ \ \bo a$.  Then $\bo a\ind[I\omega]_D \ \ \bo a$ for some countable $D\subseteq C$,
so $a(\omega)\ind[I]_{D(\omega)} \ \ a(\omega) \as$, and hence $\bo a\in\acl^\omega(D)\subseteq\acl^\omega(C)$.
\end{proof}

To sum up, we have:

\begin{cor}  \label{c-summary}  If $\ind[I]$ satisfies the basic axioms for an independence relation over $\cu M$ ,
then $\ind[I \omega]$ \ \ over $\cu N$ also satisfies these axioms.
\end{cor}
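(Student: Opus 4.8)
The plan is to reduce everything to Proposition \ref{p-omega-cbased}, since the substantive work has already been carried out there. First I would observe that the hypothesis is exactly what is needed for the pointwise relation to make sense: because $\ind[I]$ satisfies the basic axioms it has, in particular, monotonicity, so $\ind[I\omega]$ is well-defined by Definition \ref{d-pointwise} (whose unique-existence clause relies on Lemma \ref{l-cbased}(2)).

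Next I would simply enumerate the five basic axioms (invariance, monotonicity, base monotonicity, transitivity, normality) and match each to the appropriate clause of Proposition \ref{p-omega-cbased}. Monotonicity of $\ind[I\omega]$ is delivered directly by part (1) of that proposition. For the remaining four axioms, I would invoke part (2): since $\ind[I]$ has invariance, base monotonicity, transitivity, and normality (being among the basic axioms it is assumed to satisfy), part (2) transfers each of these properties to $\ind[I\omega]$.

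There is no real obstacle at this level; the corollary is purely a matter of collecting the results of Proposition \ref{p-omega-cbased}. The genuine content lies one step earlier, in the proof of that proposition, where each property is first verified for countable sets (using that $P(\omega)$ holds almost surely is preserved under the relevant closure operations on countable data) and then promoted from countable to all small sets through the countably-based machinery of Lemma \ref{l-cbased}(4). Accordingly, the proof I would write is a one- or two-line citation: \emph{The basic axioms for $\ind[I\omega]$ follow from Proposition \ref{p-omega-cbased}, parts (1) (monotonicity) and (2) (invariance, base monotonicity, transitivity, normality).}
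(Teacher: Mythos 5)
Your proposal is correct and matches the paper exactly: the corollary is stated immediately after Proposition \ref{p-omega-cbased} with the phrase ``To sum up,'' and is intended as precisely the collection of parts (1) and (2) of that proposition that you describe. Your preliminary remark that monotonicity of $\ind[I]$ is what makes $\ind[I\omega]$ well-defined via Definition \ref{d-pointwise} is a correct and worthwhile observation, though the paper leaves it implicit.
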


\begin{prop}  \label{p-pointwise-finitechar}
Suppose $\ind[I]$ is a  ternary relation over $\cu M$ that is countably based and has finite character and the countable union property.
Then  $\ind[I \omega]$ \ \ has finite character.
\end{prop}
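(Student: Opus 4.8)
The plan is to imitate the proof of Proposition~\ref{p-preserve-finitechar} for $\ind[Ic] \ $, but to carry out the concluding appeal to finite character of $\ind[I]$ \emph{pointwise} in the underlying probability space, since here finite character of $\ind[I\omega] \ \ $ is precisely what we are trying to establish and hence cannot itself be invoked. I will freely use that, by Proposition~\ref{p-omega-cbased}, the relation $\ind[I\omega] \ \ $ is countably based, has monotonicity, and---because $\ind[I]$ has the countable union property---also has the countable union property. Assume $A_0 \ind[I\omega]_C B$ for every finite $A_0 \subseteq A$; the goal is $A \ind[I\omega]_C B$. As $\ind[I\omega] \ \ $ is countably based, it suffices to fix countable $A' \subseteq A$, $B' \subseteq B$, $C' \subseteq C$ and exhibit a countable $D \in [C',C]$ with $A' \ind[I\omega]_D B'$.

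First I would set up an increasing approximation. Writing $A' = \bigcup_n E_n$ with each $E_n$ finite and $E_n \subseteq E_{n+1}$, the hypothesis together with monotonicity gives $E_n \ind[I\omega]_C B'$. Since $\ind[I\omega] \ \ $ is countably based, the characterization in Lemma~\ref{l-cbased}(3) lets me choose, by recursion on $n$, countable sets $D_0 \subseteq D_1 \subseteq \cdots$ in $[C',C]$ with $E_n \ind[I\omega]_{D_n} B'$, at each stage using the previously chosen set (or $C'$ initially) as the prescribed countable subset of $C$. Put $D = \bigcup_n D_n$, a countable member of $[C',C]$. Monotonicity applied to the inclusions $E_n \subseteq E_k$ yields $E_n \ind[I\omega]_{D_k} B'$ for all $n \le k$, and then, for each fixed $n$, the countable union property of $\ind[I\omega] \ \ $ applied to the tail chain $D_n \subseteq D_{n+1} \subseteq \cdots$ (whose union is $D$) gives $E_n \ind[I\omega]_D B'$.

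The remaining task is to pass from the $E_n$ up to $A'$, and this is where I descend to $\Omega$. Each relation $E_n \ind[I\omega]_D B'$ involves only countable sets, so by Definition~\ref{d-pointwise} it asserts that $E_n(\omega) \ind[I]_{D(\omega)} B'(\omega)$ holds for almost all $\omega$; intersecting the countably many measure-one sets, there is a single measure-one set on which $E_n(\omega) \ind[I]_{D(\omega)} B'(\omega)$ holds for all $n$ at once. Fixing such an $\omega$, any finite $F \subseteq A'(\omega)$ equals $\{a_1(\omega),\dots,a_m(\omega)\}$ for some $\bo a_1,\dots,\bo a_m \in A'$, and since the $E_n$ exhaust $A'$ as abstract sets we have $F \subseteq E_n(\omega)$ once $n$ is large; monotonicity of $\ind[I]$ then gives $F \ind[I]_{D(\omega)} B'(\omega)$. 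Finite character of $\ind[I]$ in $\cu M$ therefore yields $A'(\omega) \ind[I]_{D(\omega)} B'(\omega)$. As this holds for almost all $\omega$, Definition~\ref{d-pointwise} gives $A' \ind[I\omega]_D B'$, which is exactly what was needed.

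I expect the only real obstacle to be the last paragraph: one must check \emph{every} finite subset of $A'(\omega)$, not merely the sets $E_n(\omega)$, before invoking finite character of $\ind[I]$. The point is that this verification is carried out for a \emph{fixed} $\omega$ in the measure-one set, so an arbitrary finite $F \subseteq A'(\omega)$ may be realized by finitely many elements of $A'$, which already sit inside some $E_n$ because the $E_n$ increase to $A'$ as subsets of $\cu K$; monotonicity then absorbs the gap. Beyond this, I would only need to confirm that the relevant ``for almost all $\omega$'' sets are measurable and that a countable intersection of measure-one sets is again of measure one, both of which hold because $(\Omega,\cu F,\mu)$ is a complete probability space.
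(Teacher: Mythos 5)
Your proof is correct and follows essentially the same route as the paper's: write the countable set $A'$ as an increasing union of finite sets, use the countably based property to build an increasing chain of countable base sets $D_n$, pass to $D=\bigcup_n D_n$ via the countable union property, and finish with finite character of $\ind[I]$ applied pointwise on a common measure-one set. The only (harmless) difference is that you invoke the countable union property at the level of $\ind[I\omega]\ \,$ via Proposition~\ref{p-omega-cbased} rather than applying it to $\ind[I]$ pointwise, and you spell out the final finite-character step at a fixed $\omega$ in more detail than the paper does.
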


\begin{proof}  Suppose $A'\ind[I \omega]_C \ B$ for all finite $A'\subseteq A$.  Let $A_0\subseteq A, B_0\subseteq B, C_0\subseteq C$
be countable.  We must find a countable $D\in[C_0,C]$ such that $A_0\ind[I\omega]_{D} \ B_0$.  We may write $A_0=\bigcup_n E_n$ where
$E_0\subseteq E_1\subseteq\cdots$ and each $E_n$ is finite.  Then $E_n\ind[I \omega]_C B$ for each $n$.

Since $\ind[I \omega] \  $ is countably
based, there are countable sets $\<D_n\>_{n\in\BN}$ such that for each $n$, $D_n\in[C_0,C]$, $D_n\subseteq D_{n+1}$, and $E_n\ind[I \omega]_{D_n} \ B_0$,
and hence $E_n\ind[I]_{D_n} B_0 \ \as.$    Let $D=\bigcup_n D_n$.
Since $\ind[I]$ has the countable union property, for each $n$ we have $ E_n\ind[I]_{D} B_0 \ \as$.
Since $\ind[I]$ has finite character, $ A_0\ind[I]_{D} B_0 \ \as$., so $A_0\ind[I\omega]_D \ B_0$.
\end{proof}

\subsection{Measurable Ternary Relations}

\begin{df}  \label{d-measurable}
We say that a ternary relation $\ind[I]$ over the big model $\cu M$ of $T$ is \emph{measurable} if
 $\l A\ind[I]_C B\rr\in\cu F$ for all countable $A,B,C\subseteq\cu K$.
\end{df}

We will sometimes use measurability without explicit mention in the following way:  if $\ind[I]$ is measurable, $A,B,C$ are countable, and $A \nind[I]_C B$, then $\mu(\l A\ind[I]_C B\rr)=r$ for some $r<1$.

In Lemma \ref{l-conjunctive-measurable} below, we will give a  useful sufficient condition for measurability.
 $L_{\omega_1\omega_1}$ is the infinitary logic that contains first order logic and is closed under finite or countable conjunctions and disjunctions, negations,
and finite or countable existential and universal quantifiers.   An $L_{\omega_1\omega_1}$ formula is said to be \emph{conjunctive}  if it is equivalent to a formula that is built from first order
formulas using only finite or countable conjunctions, finite or countable quantifiers, and finite  disjunctions.  By a \emph{Borel-conjunctive} formula we mean an
$L_{\omega_1\omega_1}$ formula that is built from conjunctive formulas using only negations and finite and countable conjunctions and disjunctions.

The following result is a consequence of Theorem 2.3 in [Ke2]. It depends on the fact that $\cu M$ is $\aleph_1$-saturated.

\begin{result}  \label{f-conjunctive}
For every countable set $X$ of variables and conjunctive $L_{\omega_1\omega_1}$-formula $\theta(X)$
there is a countable set $\cu A(\theta)$ of first order formulas (the set of finite approximations of $\theta$) such that
$$\cu M\models (\forall X)\left[\theta(X)\leftrightarrow\bigwedge\left\{\psi(X)\colon\psi\in\cu A(\theta)\right\}\right].$$
\end{result}

We say that $\ind[I]$ is \emph{definable by} an $L_{\omega_1\omega_1}$  formula $\varphi(X,Y,Z)$ in $\cu M$ with countable sets of variables $X,Y,Z$ if for all countable sets
$A, B, C$ indexed by $(X,Y,Z)$ in $\cu M$, we have
$$A\ind[I]_C B\Leftrightarrow\cu M\models\varphi(A,B,C).$$

\begin{lemma}  \label{l-conjunctive-measurable}
Suppose $\ind[I]$ is definable by a Borel-conjunctive formula $\varphi$ in $\cu M$.  Then $\ind[I]$ is measurable, and for all countable sets $A, B, C\subseteq\cu K$,
the reduction of $\l A\ind[I]_C B\rr$ belongs to $\dcl_\BB(ABC)$.
\end{lemma}

\begin{proof}  By Result \ref{f-conjunctive}, for every conjunctive formula $\theta(X,Y,Z)$ and all countable
sets $A,B,C\subseteq\cu K$ we have
$$\l\theta(A,B,C)\rr=\bigcap\{\l\psi(A,B,C)\rr\colon\psi\in\cu A(\theta)\}.$$
By definition, $\l\psi(A,B,C)\rr\in\cu F$ for every first order formula $\psi$.  Since $\cu F$ is a $\sigma$-algebra,
it follows that $\l\theta[A,B,C)\rr\in\cu F$ for every conjunctive formula $\theta$.  Since $\varphi$ is Borel-conjunctive, we have
$$\l A\ind[I]_C B\rr=\l\varphi(A,B,C)\rr\in\cu F.$$
Because the reduction of $\l\psi(A,B,C)\rr$ belongs to $\fo_\BB(ABC)$ for each first order formula $\psi$, and $\mu$ is $\sigma$-additive,
we see from Result \ref{f-separable} that the reduction of $\l\varphi(A,B,C)\rr$ belongs to $\dcl_\BB(ABC)$.
\end{proof}

\begin{thm}  \label{t-measurable}
For every complete theory $T$, each of the relations
$$\ind[a], \qquad \ind[M] \ , \qquad \ind[b], \qquad \ind[e], \qquad \ind[d]$$
is definable by a Borel-conjunctive formula and hence is measurable over $\cu M.$
\end{thm}

\begin{proof}
$\ind[a]$ is defined by the Borel-conjunctive formula
$$\neg\bigvee_i\bigvee_j(\exists u)[\varphi_i(u,X,Z)\wedge \psi_j(u,Y,Z) \wedge \bigwedge_k \neg\chi_k(u,Z)],$$
where $\varphi_i(u,X,Z), \psi_j(u,Y,Z), \chi_k(u,Z)$ list all algebraical formulas with the indicated variables.

$\ind[M] \ $:  Let $\vec v_n$ denote the $n$-tuple of variables $\<v_1,\ldots,v_n\>$.
Let $\eta_i(u,Y,Z)$ enumerate all algebraical formulas over $Y,Z$.  Let $\varphi_i^n(u,\vec v_n,X,Y,Z)$ enumerate all formulas
of the following form:
$$\bigwedge_{j=1}^n \eta_{i_j}(v_j,Y,Z)\wedge \psi(u,\vec v_n,X,Z)\wedge \chi(u,\vec v_n,Y,Z),$$
where $\psi$ and $\chi$ are also algebraical formulas; here $n$ is allowed to vary.  Let $\theta_k^n(u,\vec v_n,Z)$ enumerate all algebraical
formulas over $\vec v_n,Z$.  Then $\ind[M] \ $ is defined by the Borel-conjunctive formula
$$
\neg\bigvee_n\bigvee_i(\exists u)(\exists \vec v_n)\left[\varphi_i^n(u,\vec v_n,X,Y,Z)\wedge \bigwedge_k\neg \theta_k^n(u,\vec v_n,Z))\right].$$

$\ind[b]$ is defined by the Borel-conjunctive formula
$$\bigwedge_{i} \bigwedge_{\bar {x}\in X^{<\BN}}\bigwedge_{\bar y\in Y^{<\BN}}\bigwedge_{\bar{z}\in Z^{<\BN}}\bigvee_{\bar {u}\in Z^{<\BN}}
[\varphi_i(\bar{x},\bar{y},\bar{z})\Rightarrow\varphi_i(\bar{x},\bar{u},\bar{z})]$$
where $\varphi_i(\bar x,\bar y,\bar z)$ lists all formulas in $[L]$.

$\ind[e]$ is defined by the Borel-conjunctive formula
$$\bigwedge_{u\in X}\bigwedge_{i} \bigwedge_{\bar {x}\in X^{<\BN}}\bigwedge_{\bar{y}\in Y^{<\BN}}\bigwedge_{\bar{z}\in Z^{<\BN}}
\bigvee_{j}\bigvee_{\bar{v}\in Z^{<\BN}}
[\varphi_i(u,\bar{x},\bar{y},\bar{z})\Rightarrow\psi_j(u,\bar{x},\bar{v})]$$
where $\varphi_i(u,\bar x,\bar y,\bar z)$ and $\psi_j(u,\bar x,\bar z)$ list all algebraical formulas with the indicated variables.

$\ind[d]$: Let $\varphi(\vec x,\vec y,Z)$ be a first order formula, where
$\vec x,\vec y$ are tuples of variables, and $Z$ is a countable set of variables.
For each tuple $\vec b$ and countable set $C$ in $\cu M$, $\varphi(\vec x, \vec b, C)$ divides over $C$
if and only if  $(\cu M,\vec b, C)$ satisfies the following Borel-conjunctive formula $\div_{\varphi}(\vec y,Z)$:
$$ \bigvee_k(\exists \vec u^{ \, 0}\exists\vec u^{ \, 1}\ldots)
\left[IND \wedge\vec u^{\, 0}\equiv_Z \vec y
\wedge \bigwedge_{I\subset \BN, |I|=k}\neg(\exists\vec x)\bigwedge_{i\in I} \varphi(\vec x,\vec u^{\, i},Z) \right],$$
where $|\vec u^{ \, j}|=|\vec y|$ for each $j$, and $IND$ is the conjunctive formula that says that the sequence $(\vec u^{ \, 0},\vec u^{ \, 1},\ldots)$ is indiscernible over $Z$.
Therefore $\ind[d]$ is defined by the Borel-conjunctive formula
$$ \neg\bigvee_{\vec x\in X^{<\BN}}\bigvee_{\vec y\in Y^{<\BN}}\bigvee_\varphi (\varphi(\vec x,\vec y,Z)\wedge \div_\varphi(\vec y,Z)).$$
\end{proof}

Note that if $T$ has the exchange property, then by Theorem \ref{t-indep-FO} (1), $\thind=\ind[e]$ in models of $T$, so by Theorem \ref{t-measurable},
$\thind$ is measurable over $\cu M$.

%% New subsection
\subsection{Pointwise Dividing and Stability}  \label{s-dividing}

We will establish a relationship between dividing and pointwise dividing  in the randomization of a stable theory.
Consider the relation $\ind[d\wedge] \ =\ind[d\omega] \ \ \wedge \ind[dB] \ \ $.  Proposition \ref{l-simple-omega} will show that  when $T$ is stable,
 $\ind[d\wedge] \ \ $  is an independence relation in $T^R$.
Moreover,  $\ind[d\wedge] \ =\ind[d]$ in $T^R$ if and only if $T$ has $\acl=\dcl$.

\begin{lemma}  \label{l-ind[d]-pointwise}
Suppose $A,B,C\subseteq\cu K$ are countable and $A\ind[d]_C B$ in $\cu N$.  Then $A\ind[d\omega]_C \ B$.
\end{lemma}

\begin{proof}
We will use the notation from the proof of Theorem \ref{t-measurable}.  Suppose that $A\nind[d\omega]_C \ B$.
Then the set $\sa E=\l A\ind[d]_C B\rr$ belongs to $\cu F$,  and $\mu(\sa E)<1$.  Hence there are a first-order formula $\varphi(\vec x,\vec y,Z)$,
and tuples $\vec {\bo a}\in A^{<\BN}, \vec {\bo b}\in B^{<\BN}$ such that
$$\mu(\l\varphi(\vec a,\vec b,C)\wedge\div_\varphi(\vec b,C)\rr)>0.$$
For each $k\in\BN$, let $\div^k_\varphi(\vec y,Z)$ be the part of $\div_\varphi(\vec y,Z)$ after the initial $\bigvee_k$.
Then $\div^k_\varphi(\vec y,Z)$ is a conjunctive formula,
$\l \div^k_\varphi(\vec b,C)\rr\in\cu F$, and  $\l \div_\varphi(\vec b,C)\rr=\bigcup_k\l \div^k_\varphi(\vec b,C)\rr$,
so we may find a $k\in\BN$ such that
$$r:=\mu(\l\varphi(\vec a,\vec b,C)\wedge \div^k_\varphi(\vec b,C)\rr)>0.$$
By Result \ref{f-conjunctive}, there is a countable set $\{\theta_m(\vec y,Z)\colon m\in\BN\}$  of first order formulas
closed under finite conjunction such that
$$ \cu M\models(\forall\vec y,Z)\left[\div^k_\varphi(\vec y,Z)\leftrightarrow\bigwedge_m \theta_m(\vec y,Z)\right].$$
Therefore
$$ \l\div^k_\varphi(\vec b, C)\rr=\bigcap_m \l\theta_m(\vec b,C)\rr,$$
so there exist $m(k)\in\BN$ such that
$$\mu(\l\theta_{m(k)}(\vec b,C)\wedge\neg\div^k_\varphi(\vec b,C)\rr)\le r/2.$$
Now let $\Phi(\vec x,\vec{\bo b},C)$ be the continuous formula
$$r\dotminus\mu(\l\varphi(\vec x,\vec b,C)\wedge\theta_{m(k)}(\vec b,C)\rr).$$
We have
$$\mu(\l\varphi(\vec a,\vec b,C)\wedge\theta_{m(k)}(\vec b,C)\rr)\ge r,$$
so $\Phi(\vec {\bo a},\vec {\bo b},C)=0$.

\emph{Claim.} $\Phi(\vec x,\vec {\bo b},C)$ divides over $C$ in $\cu N$.

\emph{Proof of Claim:}  Using Ramsey's theorem and the fact that $\cu M$ is saturated, one can show that for each $\omega\in \l\div^k_\varphi(\vec b,C)\rr$,
there is a  sequence $\<\vec {b'}_i\>_{i\in\BN}$ in $\cu M$ such that:
\begin{itemize}
\item[(a)] $\<\vec {b'}_i\>_{i\in\BN}$ is $C(\omega)$-indiscernible,
\item[(b)] $\vec{b'}_0\equiv_{C(\omega)} \vec b(\omega)$, and
\item[(c)] $\cu M\models\neg(\exists\vec x)\bigwedge_{i<k}\varphi(\vec x,\vec{b'}_i,C(\omega))$.
\end{itemize}
By $\omega_1$-saturation for $\cu N$ and fullness, there is a sequence $\<\vec{\bo b}''_i\>_{i\in\BN}$ in $\cu K$ such that for all $\omega\in \l\div^k_\varphi(\vec b,C)\rr$,
conditions (a)--(c) above hold when $\vec b'_i=\vec b''_i(\omega)$.
By Result \ref{f-glue}, for each $i\in\BN$ there is a $\vec{\bo b}_i$ that agrees with $\vec{\bo b}'_i$ on $\l\div^k_\varphi(\vec b,C)\rr$, and agrees with $\vec{\bo b}$ elsewhere.  Then
$\<\vec{\bo b}_i\>_{i\in\BN}$ is $C$-indiscernible, and $\vec{\bo b_0}\equiv_C\vec{\bo b}$ in $\cu N$.
Consider a tuple $\vec{\bo d}\in\cu K^{|\vec x|}$, and for each $i\in\BN$ let
$$\sa D_i=\l\varphi(\vec d,\vec b_i,C)\wedge\div^k_\varphi(\vec b,C)\rr.$$
By conditions (a) and (c) above,
for all distinct elements $i_1,\ldots,i_k$ of $\BN$, we have $\mu(\bigcap_{j=1}^k \sa D_{i_j})=0$.
Therefore, by Lemma 7.5 of [EG], there exists $i\in\BN$ such that
$\mu(\sa D_i)<r/2$.  By (a) and (b) above,
$\l\div^k_\varphi(\vec b,C)\rr=\l\div^k_\varphi(\vec b_i,C)\rr$, so
$$\sa D_i=\l\varphi(\vec d,\vec b_i,C)\wedge\div^k_\varphi(\vec b_i,C)\rr.$$
Hence
$$\mu(\l\varphi(\vec d,\vec b_i,C)\wedge\theta_{m(k)}(\vec b_i,C)\rr)\le$$
$$\mu(\sa D_i)+\mu(\l(\theta_{m(k)}(\vec b_i,C)\wedge\neg\div^k_\varphi(\vec b_i,C)\rr)
<r/2+r/2=r,$$
and thus $\Phi(\vec{\bo d},\vec{\bo b}_i,C)>0.$
This proves the Claim.

It follows that $\vec{\bo a}\nind[d]_C\vec{\bo b}$ in $\cu N$, so by monotonicity, $A\nind[d]_C B$ in $\cu N$.
\end{proof}

\begin{result}  \label{f-stable-TR}  ([BK], Theorem 5.13.)
If $T$ is stable, then $T^R$ is stable.
\end{result}

\begin{prop}  \label{l-simple-omega}  Suppose that $T$ is stable, and  let $\ind[d\wedge] \ =\ind[d\omega] \ \ \wedge\ind[d\BB] \ $ on $\cu N$.  Then on $\cu N$ we have:
\begin{enumerate}
\item $\ind[d\wedge] \  \ $ is countably based.
\item $\ind[d]\Rightarrow \ind[d\wedge] \  \ $.
\item $\ind[d\wedge] \  \  $ and $\ind[d\omega] \ \ $ are independence relations with small local character.
\item $\ind[d\wedge] \  \  = \ind[d]$ in $T^R$ if and only if $T$ has $\acl=\dcl$.
\end{enumerate}
\end{prop}

\begin{proof}   By  Proposition \ref{p-d-cbased},  Lemma \ref{l-d-union}, and Results \ref{f-stable-TR} and \ref{f-stable-indep},
over both $\cu M$ and $\cu N$, $\ind[d]$ is a strict countably
based independence relation that has the countable union property, stationarity, and small local character.

(1): $\ind[d\omega] \ \ $ and $\ind[d\BB] \ \ $ are each countably based and have the countable union property by Proposition
\ref{p-d-cbased} and Lemmas \ref{l-fB-basic}, \ref{l-fB-cbased}, and \ref{l-d-union}.  Then $\ind[d\wedge] \  \ $ is countably based by
Proposition \ref{p-union-pair}.

(2): By  Lemmas  \ref{l-cbased} (1) and \ref{l-ind[d]-pointwise} and the fact that $\ind[d]$ on $\cu N$ is countably based,
we have $\ind[d]\Rightarrow\ind[d\omega] \ \ $ on $\cu N$.
By Corollary \ref{c-fB-indep}, $\ind[d]\Rightarrow\ind[d\BB] \ \ $.

(3):  By (2) above, Corollary \ref{c-summary},  Proposition \ref{p-preserve-finitechar}, and Remark \ref{r-weaker}, $\ind[d\omega] \ \ $
is an independence relation with small local character.  By Lemma \ref{l-fB-local}, Corollary \ref{c-fB-indep}, and  Remark \ref{r-weaker},
$\ind[d\BB] \ \ $ is also an independence relation with small local character.
It then follows easily that $\ind[d\wedge] \  \ $ satisfies the basic axioms and finite character.  By (2) above and Remark \ref{r-weaker} again,
$\ind[d\wedge] \  \ $ also satisfies extension and small local character.

(4):  First assume that $T$ does not have $\acl=\dcl$.  Take a finite set $C$ and an element $a$ in $\cu M$ such that $a\in\acl(C)\setminus\dcl(C)$.
Let $x\mapsto\ti x$ be the mapping introduced in Lemma \ref{l-big-embedding}.
Then in $\cu N$ we have $\ti a\ind[d\omega]_{\ti C} \ \ti a \wedge \ti a\ind[d\BB]_{\ti C} \ \ti a$ but $\ti a\notin\dcl(\ti C)=\acl(\ti C)$,
Thus $\ti a\ind[d\wedge]_{\ti C} \ti a$ but $\ti a\nind[d]_{\ti C} \ti a$, and $\ind[d\wedge] \  \ \ne\ind[d]$ in $T^R$.

Now assume that $T$ has $\acl=\dcl$.  We show that $\ind[d\wedge] \  \ $ satisfies stationarity and is anti-reflexive.
It will then follow from Result \ref{f-stable-indep} that $\ind[d\wedge] \  \  = \ind[d]$ in $T^R$.

Let $\bo a\ind[d\wedge]_C \bo a$.  Since $\ind[d]\Rightarrow\ind[a]$ in $\cu M$, we have
$\ind[d\omega] \ \ \Rightarrow \ind[a\omega] \ \ $,
so $\bo a\ind[a\omega]_C \ \bo a$ and $\bo a\in\acl^\omega(C)=\dcl^\omega(C)$.  Also, $\ind[d\BB] \ \Rightarrow \ind[a\BB] \ $, and thus
$\bo a\ind[a\BB]_C \ \bo a$
and $\fo_\BB(\bo a C)\subseteq \acl_\BB(\bo a C)\subseteq\acl_\BB(C)$.  By Result \ref{f-acl=dcl}, $\acl_\BB(C)=\dcl_\BB(C)$.  Therefore by Result \ref{f-dcl3},
$\bo a\in\dcl(C)=\acl(C)$, so $\ind[d\wedge] \  \ $ is anti-reflexive.

To prove that $\ind[d\wedge] \  \ $ has stationarity,
suppose that in $\cu N$, $C$ is algebraically closed, $\bar a\equiv_C  \bar g$, $\bar a\ind[d\wedge]_C \ BC$, and $\bar g\ind[d\wedge]_C \ BC$.
We must prove that $\bar a\equiv_{CB}\bar g$.  By using characteristic functions of events, we may assume that
$\bar a, \bar g, B$, and $C$ have sort $\BK$.  It is enough to show that for each first order formula $\varphi(\bar x,\bar y,\bar z)$
and all tuples $\bar b$ in $B$ and $\bar c$ in $C$ we have  $\l \varphi(\bar a,\bar b,\bar c)\rr\doteq \l \varphi(\bar g,\bar b,\bar c)\rr.$

We have $\bar a\ind[d\omega]_C \ \bar b$, and $\bar g\ind[d\omega]_C \ \bar b$.
 It follows by induction that there is an increasing chain
$\bar c\subseteq C_0\subseteq C_1\subseteq\cdots$ of countable subsets of $C$ such that for each $n$,
$C_n=\fo(C_n)$, $\bar a\ind[d\omega]_{C_n} \ \bar b$
when $n$ is even, and $\bar g\ind[d\omega]_{C_n} \ \bar b$ when $n$ is odd.  Let $D=\bigcup_n C_n$.  Then $\bar c\subseteq D\subseteq C$,
$D$ is countable, and $D=\fo(D)$.  By (1) above, $\ind[d \omega] \ \ $ has the countable union property,
so $\bar a\ind[d\omega]_{D} \  \bar b$ and $\bar g\ind[d\omega]_{D} \ \bar b$.
By symmetry and normality, $\bar a\ind[d]_{D} \ \bar b D$ and $\bar g\ind[d]_{D} \ \bar b D$.
Since $D$ is countable, we have
$$\mu(\l \bar a\ind[d]_{D} \ \bar b D\rr)=\mu(\l \bar g\ind[d]_{D} \ \bar b D\rr)=\mu(\l \bar a\equiv_D \bar g\rr)=1.$$
Because  $T$ has $\acl=\dcl$ and $D=\fo(D)$, we have $\mu(\l D=\acl(D)\rr)=1$.  Then since $\ind[d]$ in $T$ has stationarity,
$\mu(\l \bar a\equiv_{\bar b D} \bar g \rr)=1.$  Therefore $\l \varphi(\bar a,\bar b,\bar c)\rr\doteq \l \varphi(\bar g,\bar b,\bar c)\rr$,
as required.
\end{proof}

\subsection{Pointwise Blanketing Independence} \label{s-blanketing}

In $\cu N$, we say that $C$ \emph{blankets} $A$ in $B$ if $A\ind[b\omega]_C \ B.$

\begin{cor}  The relation $\ind[b\omega] \ \ $ over $\cu N$ has invariance, monotonicity, and normality.
\end{cor}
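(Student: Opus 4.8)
The plan is to observe that this is an immediate application of the machinery already developed for pointwise relations, so no new ideas are required; the work is simply to check that the hypotheses of the earlier results are met. The key input is Lemma \ref{l-ind[c]-easy}, which tells us that the covering relation $\ind[c]$ over $\cu M$ has monotonicity, invariance, and normality (among other properties). In particular, since $\ind[c]$ has monotonicity, Definition \ref{d-pointwise} applies and the pointwise relation $\ind[c\omega]$ over $\cu N$ is well-defined as the unique countably based relation agreeing with the almost-everywhere version of $\ind[c]$ on countable sets.

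First I would dispatch monotonicity. By Proposition \ref{p-omega-cbased}(1), for \emph{any} ternary relation $\ind[I]$ over $\cu M$ with monotonicity, the pointwise relation $\ind[I\omega]$ automatically has monotonicity. Applying this with $\ind[I]=\ind[c]$ gives monotonicity of $\ind[c\omega]$ at once.

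Next I would handle invariance and normality together. By Lemma \ref{l-ind[c]-easy}, $\ind[c]$ has both invariance and normality. Proposition \ref{p-omega-cbased}(2) states precisely that each of invariance and normality (among other listed properties) is inherited by the pointwise relation $\ind[I\omega]$ whenever $\ind[I]$ has it; taking $\ind[I]=\ind[c]$ therefore yields invariance and normality of $\ind[c\omega]$, completing the proof.

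There is no genuine obstacle here: the only thing to verify is that $\ind[c]$ satisfies the hypotheses of Proposition \ref{p-omega-cbased}(1) and (2), and this is exactly the content of Lemma \ref{l-ind[c]-easy}. I would note that base monotonicity and transitivity are deliberately \emph{not} claimed for $\ind[c\omega]$, consistent with the remark following Lemma \ref{l-ind[c]-easy} that $\ind[c]$ is not asserted to have transitivity; thus Proposition \ref{p-omega-cbased}(2) yields nothing on those two fronts, and the corollary correctly restricts attention to invariance, monotonicity, and normality.
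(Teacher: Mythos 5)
Your proof is correct and follows the same route as the paper, which simply cites Lemma \ref{l-ind[c]-easy} and leaves the transfer to the pointwise relation (via Proposition \ref{p-omega-cbased}) implicit; you have merely made that transfer explicit.
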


\begin{proof}
This is an immediate consequence of Lemma \ref{l-ind[b]-easy}.
\end{proof}

\begin{rmk}  The following facts are left as exercises for the reader.  They will not be used in this paper.
\begin{itemize}
\item[(1)] The relation $\ind[b] \ $ has the countable union property,
and is  countably based.
\item[(2)] The relation $\ind[b\omega] \ \ \ $ has  finite character and the countable union property,  (Hint: For finite character,
use  Proposition \ref{p-pointwise-finitechar}.)
\end{itemize}
\end{rmk}

\begin{lemma}  \label{l-blanket-local-char}
The relation $\ind[b\omega] \  $ on $\cu N$  has small local character.
\end{lemma}

\begin{proof}
Let $A, B$ be subsets of $\cu K$, with $A$ small. Let $C_0\subseteq B$ be such that  $|C_0|\le|A|+\aleph_0$, For each
$\varphi(\bar x,\bar y,\bar z)\in[L]$, $\bar{\bo a}\in A^{|\bar x|}$, and $\bar{\bo c}\in B^{|\bar z|}$,  we may
take a countable set $D=D(\varphi,\bar{\bo a},\bar{\bo c})\subseteq B$ such that
$$\mu\left(\bigcup_{\bar{\bo d}\in D^{|\bar y|}}\l\varphi(\bar{\bo a},\bar{\bo d},\bar{\bo c})\rr\right)\ge
\mu\left(\bigcup_{\bar{\bo b}\in E^{|\bar y|}}\l\varphi(\bar{\bo a},\bar{\bo b},\bar{\bo c})\rr\right)$$
for every countable set $E\subseteq B.$  Without loss of generality, $\bar{\bo c}$ is contained in $D(\varphi,\bar{\bo a},\bar{\bo c})$.  Then
$$\l\varphi(\bar{\bo a},\bar{\bo b},\bar{\bo c})\rr\sqsubseteq\bigcup_{\bar{\bo d}\in D^{|\bar y|}}\l\varphi(\bar{\bo a},\bar{\bo d},\bar{\bo c})\rr$$
for every $\bar{\bo b}\in B^{|\bar y|}$.  We start with the given set $C_0$, and for each $n\in\BN$, let
$$C_{n+1}=C_n\cup\bigcup\{D(\varphi,\bar{\bo a},\bar{\bo c})\mid \varphi(\bar x,\bar y,\bar z)\in[L],
\bar{\bo a}\in A^{|\bar x|}, \bar{\bo c}\in C_n^{|\bar z|}\}.$$
Since the signature $L$ is countable, we see by induction that for each $n$, $|C_n|\le |A|+\aleph_0$ and $C_n\subseteq B$.
Let $C=\bigcup_n C_n$.  Then $|C_n|\le |A|+\aleph_0$ and $C\subseteq B$.
To show that $A\ind[b\omega]_C \ B$,  let $A_0\subseteq A, B_0\subseteq B, C'\subseteq C$ be countable.   Put
$$C''=\bigcup\{D(\varphi,\bar{\bo a},\bar{\bo c})\mid \varphi(\bar x,\bar y,\bar z) \in[L],
\bar{\bo a}\in A_0^{|\bar x|}, \bar{\bo c}\in C'^{|\bar z|}\}.$$
Then $C''$ is countable, $C''\in[C',C]$, and for each $\varphi(\bar x,\bar y,\bar z) \in[L], \bar{\bo a}\in A_0^{|\bar x|}, \bar{\bo c}\in C''^{|\bar z|},$
and $\bar{\bo d}\in B_0^{|\bar y|}$,
$$\l\varphi(\bar{\bo a},\bar{\bo b},\bar{\bo c})\rr\sqsubseteq\bigcup_{\bar{\bo d}\in C''^{|\bar y|}}\l\varphi(\bar{\bo a},\bar{\bo d},\bar{\bo c})\rr.$$
This shows that $A_0\ind[b\omega]_{C''} \ B_0$.  Since $\ind[b\omega]\ \ $ is countably based, it follows that $A\ind[b\omega]_{C} \ B$.
\end{proof}

\subsection{Pointwise Thorn Independence}  \label{s-pointwise-exchange}

In this section, we will prove that if $T$ has the exchange property, then $\ind[\th\omega] \ \ $ is an independence relation.

\begin{lemma}  \label{l-ind[eomega]-easy}  If $T$ has the exchange property, then
$\ind[\th\omega] \ $ satisfies the basic independence axioms and finite character, is pointwise anti-reflexive, has the countable union property,
and has small local character.
\end{lemma}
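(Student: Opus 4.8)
The plan is to obtain each clause of the lemma by transferring the corresponding property of $\thind$ over $\cu M$ to its pointwise version $\ind[\th\omega]$ over $\cu N$, using the transfer results of Sections 4 and 7. The exchange hypothesis is what makes this feasible: by Theorem~\ref{t-indep-FO}(i) it forces $\thind=\ind[e]$ and makes $T$ real rosy, so $\thind$ is a genuine strict independence relation on $\cu M$, and in particular satisfies the basic axioms, finite character, and anti-reflexivity. I would also record two further facts about $\thind$ on $\cu M$: it is countably based (Proposition~\ref{p-ind[e]-countable-based}) and has the countable union property (Lemma~\ref{l-ind[e]-union}).

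With these properties of $\thind$ in hand, four of the five assertions follow almost immediately. The basic axioms for $\ind[\th\omega]$ come from Corollary~\ref{c-summary}. Pointwise anti-reflexivity comes from Proposition~\ref{p-omega-cbased}(3), applied to the anti-reflexive relation $\thind$. The countable union property comes from Proposition~\ref{p-omega-cbased}(2), since $\thind$ has it. Finite character comes from Proposition~\ref{p-pointwise-finitechar}, whose three hypotheses---that $\thind$ be countably based, have finite character, and have the countable union property---are exactly the facts just assembled.

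The remaining clause, small local character, is the main obstacle, because it is not among the properties that Proposition~\ref{p-omega-cbased}(2) transfers automatically, and a direct argument would require gluing the pointwise witnesses over $\Omega$ into a single small base set. My plan is to sidestep this by routing through the covering relation. On $\cu M$ one has $\ind[c]\Rightarrow\ind[e]=\thind$ (Lemma~\ref{l-ind[c]-implies-ind[e]}), and both $\ind[c]$ and $\thind$ have monotonicity (Lemma~\ref{l-ind[c]-easy} and the basic axioms), so Corollary~\ref{c-pointwise-implies} yields $\ind[c\omega]\Rightarrow\ind[\th\omega]$ over $\cu N$. Since $\ind[c\omega]$ has small local character by Lemma~\ref{l-cover-local-char}, Remark~\ref{r-weaker} transports this property along the implication and gives small local character for $\ind[\th\omega]$, which would complete the proof. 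The real work has therefore been front-loaded into Lemma~\ref{l-cover-local-char}, where the measurable selection of a small covering base is carried out, so that here only the soft transfer principles need to be invoked.
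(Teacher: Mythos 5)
Your proposal is correct and follows essentially the same route as the paper: the basic axioms, pointwise anti-reflexivity, countable union property, and finite character are transferred via Propositions \ref{p-omega-cbased} and \ref{p-pointwise-finitechar} (using that $\thind=\ind[e]$ is countably based and has the countable union property), and small local character is obtained by routing through $\ind[c\omega]\Rightarrow\ind[\th\omega]$ together with Lemma \ref{l-cover-local-char}. This matches the paper's proof step for step.
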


\begin{proof} The basic axioms, the countable union property, pointwise anti-reflexiveness, and finite character follow from Propositions \ref{p-omega-cbased} and \ref{p-pointwise-finitechar}. By Lemma \ref{l-blanket-local-char},  $\ind[b\omega] \ \ $ has small local character.
By Theorem \ref{t-indep-FO} we have $\ind[b]\Rightarrow\ind[\th]$.  By Corollary \ref{c-pointwise-implies},
$\ind[b\omega] \ \Rightarrow \ind[\th\omega] \ .$  Therefore $\ind[\th\omega] \ \ $ has small local character.
\end{proof}

%%  Unnecessary lines about $\ind[e]$ removed.

Our next task is to show that $\ind[\th\omega] \ \ $  satisfies full existence.
We first introduce some notation.  $A, B, C, \cu{Z}$ will always denote small sets, with $A, B, C\subseteq\cu K$ and $ \cu{Z}\subseteq\cu E$.
Given an enumerated subset $A=\{\bo a_\alpha\mid \alpha<\kappa\}$ of $\cu K$,
for each $\alpha<\kappa$ we let $A_\alpha=\{{\bo a}_\beta\mid \beta<\alpha\}$, and for each $I\subseteq\kappa$
we let $A_I=\{{\bo a}_\beta\mid\beta\in I\}.$
Given an automorphism $\bo u\mapsto\bo u'$ of $\cu N$,  we let $A'=\{\bo a'\mid \bo a\in A\}$.

\begin{thm}   \label{l-full-existence} Suppose $T$ has the exchange property.  For any $A, B, C, \cu{Z}$,
there exists $A'\equiv_{C\cu{Z}} A$ such that $A'\ind[\th\omega]_C \ B$.  Hence $\ind[\th\omega] \ \ $  satisfies full existence in $\cu N$.
\end{thm}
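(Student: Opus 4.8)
The plan is to work inside the neat randomization $\cu P=(\cu L,\cu F)$ and to reduce the global requirement $A'\equiv_{CE}A$ to a purely fiberwise one. The guiding observation is that it suffices to produce $A'$ with
$A'(\omega)\equiv_{C(\omega)}A(\omega)$ and $A'(\omega)\ind[e]_{C(\omega)}B(\omega)$ for almost all $\omega$. Indeed, preserving the full first order fiber type over $C(\omega)$ forces $\l\varphi(A',C)\rr=\l\varphi(A,C)\rr$ in $\cu F$ for every first order $\varphi$; since every quantifier-free $L^R$-formula over $CE$ is a continuous function of the measures of Boolean combinations of such events with the events of $E$, these formulas take the same value at $A'$ as at $A$, so $A'\equiv_{CE}A$ by quantifier elimination (Result \ref{f-qe}). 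On the other hand, ``$A'(\omega)\ind[e]_{C(\omega)}B(\omega)$ almost surely'' is, for countable sets, exactly the defining condition $A'\ind[\th\omega]_C B$, because $\ind[\th]=\ind[e]$ for theories with the exchange property (Theorem \ref{t-indep-FO}). Thus the event parameters $E$ cost nothing: they are handled automatically by fiberwise type preservation, and I would not need to treat them separately.

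First I would settle the case where $A,B,C$ are countable, which is the technical heart. Here fiberwise full existence is available: since $T$ has the exchange property, $\ind[e]=\thind$ is a strict independence relation (Theorem \ref{t-indep-FO}), hence has full existence in $\cu M$, so for each $\omega$ there is a realization of $\tp(A(\omega)/C(\omega))$ that is $\ind[e]$-independent from $B(\omega)$ over $C(\omega)$. The task is to assemble these fiberwise realizations into genuine elements of the home sort. I would do this by realizing, via saturation of $\cu N$, a type $\Gamma$ in the variables $A'=\{\bo a'_\alpha\}$ over $CE$ whose conditions assert $A'\equiv_{CE}A$ together with $\mu(\l A'\ind[e]_C B\rr)=1$; the latter makes sense because $\ind[e]$ is measurable (Lemma \ref{l-ind[e]-measurable}), so $\l A'\ind[e]_C B\rr$ is the countable intersection of events exhibited there (and is measurable by Result \ref{f-pointwisemeasurable}). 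Finite satisfiability of $\Gamma$ I would obtain from fullness (Result \ref{f-perfectwitnesses}) and gluing (Result \ref{f-glue}): over a finite partition of $\Omega$ one approximates the finitely many type-conditions and solves the independence requirement on each piece by fiberwise full existence in $\cu M$, then glues. The realization of $\Gamma$ then satisfies $A'\equiv_{CE}A$ and $A'\ind[\th\omega]_C B$ by the reduction of the first paragraph.

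For the general case, with $A$ small but possibly uncountable, I would enumerate $A=\{\bo a_\alpha\mid\alpha<\kappa\}$ and build $A'=\{\bo a'_\alpha\mid\alpha<\kappa\}$ by transfinite recursion, maintaining an elementary map fixing $CE$ with $A'_\alpha\equiv_{CE}A_\alpha$ and $A'_\alpha\ind[\th\omega]_C B$. At a successor stage the Pairs Lemma (Remark \ref{r-pairs}) reduces adjoining $\bo a'_\alpha$ to a single-element full-existence instance over the base $CA'_\alpha$, its hypotheses (monotonicity, base monotonicity, transitivity, symmetry) being supplied by Lemma \ref{l-ind[eomega]-easy} and Proposition \ref{p-omega-cbased}; the enlarged but still small base is localized to a countable one using the small local character of $\ind[\th\omega]$ (Lemma \ref{l-ind[eomega]-easy}), after which the countable case applies. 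At limit stages the countable union property together with finite character of $\ind[\th\omega]$ (again Lemma \ref{l-ind[eomega]-easy}) preserve independence.

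The hard part is the countable case: turning the fiber-by-fiber existence of $\ind[e]$-independent realizations in $\cu M$ into a single element of $\cu K$ that simultaneously preserves the type over $CE$ and is pointwise independent. The difficulty is that the realizations provided by full existence in $\cu M$ carry no coherence across $\omega$, so the measurable/gluing step—reconciling the global measure-theoretic constraint $A'\equiv_{CE}A$ with the fiberwise independence constraint—is exactly where fullness, Result \ref{f-glue}, the measurability of $\ind[e]$, and saturation must be combined with care. Once that is in place, the remainder is organizational bookkeeping with the already-established properties of $\ind[\th\omega]$.
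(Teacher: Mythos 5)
Your high-level architecture matches the paper's: reduce to fiberwise statements, adjoin one new element at a time by transfinite recursion, use the Pairs Lemma at successors and finite character at limits, and observe that exact agreement of the events $\l\theta(A',\bar{\bo c})\rr$ with $\l\theta(A,\bar{\bo c})\rr$ makes the event parameters $E$ come for free. But the step you defer --- producing a single $\bo a'$ with $\bo a'\equiv_{CE}\bo a$ and $\bo a'\ind[\th\omega]_C \ B$ --- is the entire content of the theorem, and your sketch of it does not go through as written. The condition $\mu(\l A'\ind[e]_C B\rr)=1$ cannot be placed in a type to be realized by saturation: by Lemma \ref{l-ind[e]-measurable}, $\l A'\ind[e]_C B\rr$ is a countable intersection of countable \emph{unions} over witnesses $\bar{\bo d}\in C^{<\BN}$ and algebraical formulas $\psi_j$, so ``this set has measure one'' is an existential condition, not a closed $L^R$-condition, and it is not approximable by finite fragments of a type. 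Likewise, ``fiberwise full existence in $\cu M$'' hands you a family of realizations with no measurability across $\omega$, and neither Result \ref{f-glue} nor perfect witnesses will glue a non-measurable family of choices into an element of $\cu K$.

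The missing idea is the pointwise covering relation of Section \ref{s-covering}. The paper first uses Lemma \ref{l-cover-local-char} to find a countable $C_0\subseteq C$ that covers $\bo a$ in $C$ and sets $\sa E=\Omega\setminus\l\bo a\in\acl(C_0)\rr$. Covering is exactly what shows (Claim 1 in the proof of Lemma \ref{l-single}) that at almost every $\omega\in\sa E$ each finite fragment of $\tp(a(\omega)/C(\omega))$ is satisfied by infinitely many elements; this lets one replace your measure-one requirement by the \emph{closed} conditions $\sa E\sqsubseteq\l\neg\psi(\bo u,\bar{\bo b})\rr$ for $\psi$ algebraical and $\bar{\bo b}\in(BC)^{<\BN}$, whose finite satisfiability then follows from perfect witnesses and whose realization does yield $\bo a'\ind[\th\omega]_C \ B$. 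A second point you elide: at a successor stage the element $\bo a_\alpha$ may be pointwise algebraic over $A_\alpha C$ on part of $\Omega$, where it cannot be moved while preserving the type; the paper must preprocess the enumeration (replacing $\bo d_\alpha$ by $\bo a_0$ on $\l\bo d_\alpha\in\acl(X_\alpha)\rr$, which is hypothesis (ii) of Lemma \ref{l-wlog}) so that off a controlled event each new element is genuinely non-algebraic over what came before. Your plan to ``localize the enlarged base to a countable one by small local character'' does not substitute for this: small local character shrinks a base \emph{inside} $B$ for an independence that already holds, whereas here the base $CA'_\alpha$ is not contained in $B$ and base monotonicity runs the wrong way; the correct localization is through the covering relation and the countably based definition of $\ind[\th\omega]$, not through small local character.
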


\emph{For the remainder of this section, we assume that $T$ has the exchange property}.
\medskip

Before proving Theorem \ref{l-full-existence}, we will prove two special cases that will be used in the main proof.
We will frequently use, without explicit mention, the fact that $\thind=\ind[e]$ in $\cu M$ (Theorem \ref{t-indep-FO} (iii)).

\begin{lemma} \label{l-single}   For any $B,C, \cu{Z}$ and each $\bo a\in\cu K$, there exists $\bo a'\equiv_{C\cu{Z}}\bo a$ such that $\bo a'\ind[\th\omega]_C \ B$.
\end{lemma}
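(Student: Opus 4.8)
The plan is to pass to the pointwise picture for $\ind[e]$ in $\cu M$ (legitimate since $\thind=\ind[e]$, Theorem~\ref{t-indep-FO}) and then to manufacture the required measurable element by realizing a type in the saturated model $\cu N$, with finite satisfiability supplied by fullness (Result~\ref{f-perfectwitnesses}) and gluing (Result~\ref{f-glue}). I begin with two reductions. First, for a single element $a(\omega)\ind[e]_{C(\omega)}B(\omega)$ says exactly that $a(\omega)\in\acl^{\cu M}(B(\omega)C(\omega))$ implies $a(\omega)\in\acl^{\cu M}(C(\omega))$. Second, preservation of the type over $CE$ is automatic once the pointwise type over $C$ is preserved: if $\l\varphi(\bo a',\bar c)\rr\doteq\l\varphi(\bo a,\bar c)\rr$ for every $L$-formula $\varphi$ and every $\bar c\in C^{<\BN}$, then by quantifier elimination (Result~\ref{f-qe}) $\bo a'\equiv_{CE}\bo a$, because every $L^R$-formula over $CE$ is a continuous combination of measures of Boolean terms built from events $\l\varphi(\cdot,\bar c)\rr$ and the events of $E$. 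So it suffices to build $\bo a'$ satisfying the conditions $\l\varphi(\bo a',\bar c)\rr\doteq\l\varphi(\bo a,\bar c)\rr$ (call these (I)) together with a genericity requirement forcing $a'(\omega)\ind[e]_{C(\omega)}B(\omega)$ almost surely.

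Since $C$ may be uncountable, I would first trap the algebraic part of $C$ inside a countable piece. By a maximal-measure argument over countable subsets of $C$ (using that $\l a\in\acl^{\cu M}(C_0)\rr\in\cu F$ for countable $C_0$, the $\acl$-analogue of Result~\ref{f-pointwisemeasurable}), choose a countable $C_*\subseteq C$ for which $\sa{G}:=\l a\in\acl^{\cu M}(C_*)\rr$ has maximal measure; then for every countable $C_0\subseteq C$ one gets $\l a\in\acl^{\cu M}(C_0)\rr\subseteq\sa{G}$ almost surely. The role of $\sa{G}$ is a clean dichotomy: on $\sa{G}$ the element is already algebraic over $C_*$, while on $\neg\sa{G}$ we must keep $a'(\omega)$ out of $\acl^{\cu M}(B(\omega)C(\omega))$. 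I then form the type $\Sigma(\bo x)$ over $\{\bo a\}\cup B\cup C$ (note $\sa{G}$ is available as a parameter) consisting of the conditions (I) together with the genericity conditions
$$ \mu\bigl(\l\psi(\bo x,\bar b,\bar c)\rr\sqcap\neg\sa{G}\bigr)=0 $$
ranging over all algebraical $\psi$ and all $\bar b\in B^{<\BN},\ \bar c\in C^{<\BN}$ (call these (II)).

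The core of the argument is finite satisfiability of $\Sigma$, which I would prove by an explicit gluing. Given finitely many instances, partition $\neg\sa{G}$ by the finite Boolean algebra generated by the events $\l\varphi_i(\bo a,\bar c_i)\rr$ occurring in the chosen (I)-conditions; on each cell the truth pattern of the $\varphi_i(\cdot,\bar c_i(\omega))$ is constant, giving a fixed formula $\chi(u)$ satisfied by $a(\omega)$. Because the finite tuple $\bar c$ lies in $C$ and $\neg\sa{G}$ avoids $\l a\in\acl^{\cu M}(\bar c)\rr$ (by the choice of $\sa{G}$), $\chi(u)$ is non-algebraic there, so it has infinitely many solutions and one may avoid the finitely many roots of the chosen $\psi_j$; hence
$$ (\exists u)\Bigl[\chi(u)\wedge\textstyle\bigwedge_j\neg\psi_j(u,\bar b_j,\bar c_j)\Bigr] $$
holds throughout the cell. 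Fullness then provides a witness on each cell, and gluing these witnesses (and gluing $\bo a$ itself on $\sa{G}$) yields an element of $\cu K$ satisfying the finite fragment exactly; saturation realizes $\Sigma$ by some $\bo a'\in\cu K$. To finish, (I) gives $\bo a'\equiv_{CE}\bo a$, and for the countably based condition defining $\ind[\th\omega]$ I would, given countable $B'\subseteq B$ and $C'\subseteq C$, use the countable base $D:=C'\cup C_*$: writing $\sa{G}=\bigcup_k\l\theta_k(\bo a,\bar c_k)\rr$ with $\bar c_k\in C_*$, condition (I) forces $a'(\omega)\in\acl^{\cu M}(C_*(\omega))\subseteq\acl^{\cu M}(D(\omega))$ almost surely on $\sa{G}$, while the countably many instances of (II) with parameters in $B'$ and $D$ force $a'(\omega)\notin\acl^{\cu M}(B'(\omega)D(\omega))$ almost surely on $\neg\sa{G}$; either way $a'(\omega)\ind[e]_{D(\omega)}B'(\omega)$ almost surely, which is exactly the witness demanded by the countably based definition, so $\bo a'\ind[\th\omega]_C B$.

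I expect the main obstacle to be the interplay between the uncountable parameter set $C$ and the ``almost everywhere'' quantifier: forcing $a'(\omega)\notin\acl^{\cu M}(B(\omega)C(\omega))$ on all of $\neg\sa{G}$ directly would demand control of an uncountable union of null exceptional sets, which can fail. The two devices that dissolve this are the maximal-measure event $\sa{G}$, which reduces algebraicity over the whole of $C$ to algebraicity over the countable $C_*$, and the discipline of only ever invoking countably many instances of (II) at once (those with parameters in the countable $D$ and $B'$) when verifying the countably based condition. After these are in place, the remaining work is the cell-decomposition plus fullness argument establishing finite satisfiability.
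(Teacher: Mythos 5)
Your proposal is correct and takes essentially the same route as the paper's proof: both realize, by a cell-decomposition/perfect-witnesses/gluing argument plus saturation, a type consisting of event-preservation conditions $\l\theta(\bo a',\bar{\bo c})\rr\doteq\l\theta(\bo a,\bar{\bo c})\rr$ over $C$ together with non-algebraicity conditions over $BC$ on the complement of $\l \bo a\in\acl(C_0)\rr$ for a suitable countable $C_0\subseteq C$, and then verify the countably based condition using a countable base containing $C_0$. The only (inessential) difference is that the paper extracts $C_0$ and the ``infinitely many solutions'' claim from the pointwise covering relation $\ind[c\omega]\ $ (Lemma \ref{l-cover-local-char}), whereas you inline the same maximal-measure argument directly via your event $\sa G$.
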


\begin{proof} Let $B, C$ be small.  By Lemma \ref{l-blanket-local-char}, there is a countable set $C_0\subseteq C$ that blankets $\bo a$ in $C$.
Let $\sa E=\Omega\setminus\l\bo a\in\acl(C_0)\rr.$
Let $\Gamma(\bo u)$ and $\Delta(\bo u)$ be the sets of $L^R$-formulas
$$\Gamma(\bo u)=\{\l\theta(\bo a,\bar{\bo c})\rr\sqsubseteq\l\theta(\bo u,\bar{\bo c})\rr\mid  \theta\in[L],\bar{\bo c}\in C^{<\BN}\},$$
$$\Delta(\bo u)=\{\sa E\sqsubseteq\l\neg\psi(\bo u,\bar{\bo b})\rr\mid \psi \mbox{ algebraical, } \bar{\bo b}\in (BC)^{<\BN}\}.$$
Let $\Gamma_0(\bo u,\bar{\bo c}), \Delta_0(\bo u,\bar{\bo c},\bar{\bo b})$ be finite subsets of $\Gamma(\bo u)$ and $\Delta(\bo u)$ respectively.
\medskip

\textbf{Claim 1.} At almost all  $\omega\in\sa E$,
$\Gamma_0(u,\bar{\bo c}(\omega))$ is satisfied by infinitely many elements.

\textit{Proof of Claim 1.}
For each $n\in\BN$, the formula
$$\psi_n(u,\bar v)=\bigwedge \Gamma_0(u,\bar v)\wedge(\exists^{\le n} w)\bigwedge\Gamma_0(w,\bar v)$$
is algebraical.  Since $C_0$ blankets $\bo a$ in $C$,
$$\l \psi_n(\bo a,\bar{\bo c})\rr\sqsubseteq \bigcup_{\bar{\bo d}\in C_0^{<\BN}}\l\psi_n(\bo a,\bar{\bo d})\rr\sqsubseteq\l\bo a\in\acl(C_0)\rr\doteq(\Omega\setminus \sa E).$$
It is clear that $\l\bigwedge\Gamma_0(\bar{\bo a,\bar{\bo c}})\rr=\top$.  Therefore for each $n\in\BN$ we have
$$\sa E\sqsubseteq \l(\exists^{> n} w)\bigwedge\Gamma_0(w,\bar{\bo c})\rr.$$
This proves Claim 1.

It is clear that at all  $\omega\in\Omega$,
$\Delta_0(u,\bar c,\bar b))$ is satisfied by all but finitely many elements.  So
$\Gamma_0(u,\bar c)\cup\Delta_0(u,\bar c,\bar b)$ is satisfiable at almost all $\omega\in\sa E$.
At all $\omega\notin\sa E$, $\Gamma_0(\bo a)\cup\Delta_0(\bo a)$ holds.
Since $\cu N$ has perfect witnesses, $\Gamma_0(\bo u)\cup\Delta_0(\bo u)$ is satisfiable in $\cu N$.
By the saturation of $\cu N$, $\Gamma(\bo u)\cup\Delta(\bo u)$ is satisfiable by some element $\bo a'\in\cu K$.
By $\Gamma(\bo a')$ we have $\l\theta(\bo a,\bar{\bo c})\rr\doteq \l\theta(\bo a',\bar{\bo c})\rr$ for each  $\theta\in[L]$ and $\bar{\bo c}\in C^{<\BN}.$  By quantifier elimination in $T^R$ (Result \ref{f-qe}), it follows that $\bo a'\equiv_{C\cu{Z}}\bo a$ for each small $\cu Z$.

Using $\Delta(\bo a')$, we see that for any countable $B_0\subseteq B$ and $C_1\subseteq C$,
$$\mu(\l\bo a'\in\acl(B_0C_0C_1)\Rightarrow \bo a\in\acl(C_0) \rr)=1.$$
Therefore
$\mu(\l\bo a'\ind[\th]_{C_0 C_1} \ B_0\rr)=1,$ so $\bo a'\ind[\th\omega]_C \ B.$
\end{proof}

\begin{lemma} \label{l-wlog}
Given any $B,C,\cu{Z}$, suppose that:
\begin{enumerate}
\item $A=\{\bo a_\alpha\mid \alpha<\kappa\}$,
\item
$(\forall \alpha<\kappa)(\forall^c G\subseteq A_\alpha C)\l \bo a_\alpha \in\acl(G)\rr\sqsubseteq\l \bo a_\alpha = {\bo a}_0\rr,$
\end{enumerate}
Then there exists $A'\equiv_{C\cu{Z}} A$ such that $A'\ind[\th\omega]_C \ B$.
\end{lemma}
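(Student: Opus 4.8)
The plan is to build $A'$ by transfinite recursion on the enumeration in (i), producing one $\bo a'_\alpha$ at a time by a relativized version of the construction in Lemma~\ref{l-single}, and to invoke hypothesis (ii) precisely where that lemma used the covering property. Throughout the recursion I maintain two invariants: a coherent (nested) family of elementary maps $f_\alpha$ fixing $CE$ and carrying $A_\alpha$ to $A'_\alpha=\{\bo a'_\beta\mid\beta<\alpha\}$, so that $A'_\alpha\equiv_{CE}A_\alpha$, and the independence statement $A'_\alpha\ind[\th\omega]_C B$. For the base case $\alpha=0$ there are no predecessors and the base is just $C$, so $\bo a'_0$ is produced directly by Lemma~\ref{l-single}. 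At a limit stage $\lambda$ I put $A'_\lambda=\bigcup_{\alpha<\lambda}A'_\alpha$; the union of the nested maps is elementary over $CE$ (elementarity is tested on finite subtuples, each already lying in some $A'_\alpha$), giving $A'_\lambda\equiv_{CE}A_\lambda$, while $A'_\lambda\ind[\th\omega]_C B$ follows from the finite character of $\ind[\th\omega]$ (Lemma~\ref{l-ind[eomega]-easy}) together with monotonicity, since every finite subset of $A'_\lambda$ sits inside some $A'_\alpha$.

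The successor step is the heart of the argument. Given $A'_\alpha$, saturation yields $\bo a''\in\cu K$ realizing $f_\alpha(\tp(\bo a_\alpha/A_\alpha CE))$ over $A'_\alpha CE$; I then correct $\bo a''$ to the desired $\bo a'_\alpha$. Set the free event $\sa E_\alpha=\Omega\setminus\l\bo a''=\bo a'_0\rr$, the transport of $\Omega\setminus\l\bo a_\alpha=\bo a_0\rr$. Because $\bo a''$ realizes the transported type and the events $\l\bo a''\in\acl(G')\rr$ are measurable and type-determined (Result~\ref{f-pointwisemeasurable}), hypothesis (ii) transfers: for every countable $G'\subseteq A'_\alpha C$ one has $\l\bo a''\in\acl(G')\rr\sqsubseteq\Omega\setminus\sa E_\alpha$, so that, $\as$ on $\sa E_\alpha$, $\bo a''$ is non-algebraic over $A'_\alpha C$. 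Now I repeat the construction of Lemma~\ref{l-single} with $C$ replaced by $CA'_\alpha$ and $\bo a$ replaced by $\bo a''$: let $\Gamma(\bo u)$ consist of the conditions $\l\theta(\bo a'',\bar{\bo c})\rr\sqsubseteq\l\theta(\bo u,\bar{\bo c})\rr$ for $\theta\in[L]$ and $\bar{\bo c}\in(CA'_\alpha)^{<\BN}$ (pinning the $CA'_\alpha$-type and, in particular, forcing $\bo a'_\alpha=\bo a'_0$ off $\sa E_\alpha$), and let $\Delta(\bo u)$ consist of $\sa E_\alpha\sqsubseteq\l\neg\psi(\bo u,\bar{\bo b})\rr$ for $\psi$ algebraical and $\bar{\bo b}\in(BCA'_\alpha)^{<\BN}$.

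The one point that needs (ii) is the analogue of Claim~1 of Lemma~\ref{l-single}: on $\sa E_\alpha$ each finite $\Gamma_0$ has infinitely many solutions. There this came from the covering property; here it is immediate from the non-algebraicity of $\bo a''$ over $A'_\alpha C$ on $\sa E_\alpha$ just established, since the finite conjunction defining $\Gamma_0$ has parameters in $(A'_\alpha C)(\omega)$ and is realized by the non-algebraic element $\bo a''(\omega)$, hence is itself non-algebraic and has infinitely many realizations. As in Lemma~\ref{l-single}, perfect witnesses (Result~\ref{f-perfectwitnesses}) and saturation then realize $\Gamma(\bo u)\cup\Delta(\bo u)$ by some $\bo a'_\alpha$, and quantifier elimination (Result~\ref{f-qe}) gives $\bo a'_\alpha\equiv_{CA'_\alpha E}\bo a''$, so $f_\alpha$ extends to $f_{\alpha+1}$ and $A'_{\alpha+1}\equiv_{CE}A_{\alpha+1}$.

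Finally I record independence at the successor. On $\sa E_\alpha$ the conditions $\Delta$ keep $\bo a'_\alpha$ out of $\acl(BCA'_\alpha)$, and off $\sa E_\alpha$ we have $\bo a'_\alpha=\bo a'_0\in\acl(A'_\alpha C)$; hence, $\as$, $a'_\alpha(\omega)\ind[\th]_{(CA'_\alpha)(\omega)}B(\omega)$ in $\cu M$ (recall $\thind=\ind[e]$), which yields $\bo a'_\alpha\ind[\th\omega]_{CA'_\alpha}B$ after checking the countably based criterion with a countable base containing $\bo a'_0$. Combining this with the inductive hypothesis $A'_\alpha\ind[\th\omega]_C B$ via the Pairs Lemma (Remark~\ref{r-pairs}), whose hypotheses $\ind[\th\omega]$ satisfies by Proposition~\ref{p-omega-cbased} and the symmetry furnished by the exchange property, gives $A'_{\alpha+1}\ind[\th\omega]_C B$; the passage from $\bo a'_\alpha\ind[\th\omega]_{CA'_\alpha}B$ to $\bo a'_\alpha\ind[\th\omega]_{CA'_\alpha}BA'_\alpha$ needed there is routine from symmetry, normality and monotonicity. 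The recursion then closes at $\kappa$, producing $A'\equiv_{CE}A$ with $A'\ind[\th\omega]_C B$. I expect the successor step to be the main obstacle, and within it the precise transfer of hypothesis (ii) to the realized element $\bo a''$, so that (ii) supplies the infinitely-many-solutions claim over the enlarged base $CA'_\alpha$ in place of the covering argument of Lemma~\ref{l-single}.
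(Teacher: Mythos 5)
Your proposal is correct and follows essentially the same route as the paper: a transfinite recursion whose successor step realizes the transported type of $\bo a_\alpha$ and then corrects it by satisfying a $\Gamma\cup\Delta$ scheme, with hypothesis (ii) supplying the infinitely-many-solutions claim off the event $\l\bo a_\alpha=\bo a_0\rr$ exactly where the paper's Claim 2 uses it, followed by perfect witnesses, saturation, and quantifier elimination. The only (harmless) divergence is at the end: you apply the Pairs Lemma abstractly to $\ind[\th\omega]$ over $\cu N$ after arranging $\bo a'_0$ into the countable base, whereas the paper runs a pointwise induction on finite subsets containing the index $0$ and applies the Pairs Lemma inside $\cu M$; both are valid given Proposition \ref{p-omega-cbased}.
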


\begin{proof}
By Results \ref{f-acl=dcl} and \ref{f-separable},
$\acl_\BB(ABC)=\sigma(\fo_\BB(ABC))$, so $\acl_\BB(ABC)$ is small. Therefore
we may also assume without loss of generality that $\cu{Z}\supseteq\acl_\BB(ABC)$.
We now define a sequence $\{\bo{a}'_\alpha\mid \alpha<\kappa\}$ by induction on ordinals
so that for all $\beta<\kappa$,
\begin{itemize}
\item[(3)] $A'_\beta\equiv_{C\cu{Z}} A_\beta, \quad A'_\beta\ind[\th\omega]_C \ B.$
\end{itemize}
(3) is trivial when $\beta=0$.
By Lemma \ref{l-single}, we may take $\bo a'_0$ so that (3) holds for $\beta=1$.
By finite character, if $\alpha$ is a limit ordinal and
(3) holds for all $\beta<\alpha$, then (3) holds for $\beta=\alpha$.
Suppose that $\alpha>0$, and we have defined $A'_\alpha=\{\bo{a}'_\beta\mid \beta<\alpha\}$
so that (3) holds when $\beta=\alpha$.
We will find $\bo a'_\alpha$ such that (3) holds when $\beta=\alpha+1$.
Since $A'_\alpha\equiv_{C\cu{Z}} A_\alpha$, there is  an automorphism $\tau_\alpha$ of $\mathcal N$
that is fixed on $C\cu{Z}$ and maps $\bo a_\beta$ to $\bo a'_\beta$ for each $\beta<\alpha$.
\medskip

\textbf{Claim 2.}  There exists an element ${\bo u}\in\cu K$ such that:
\begin{itemize}
\item[(a)] For every formula $\theta(u,\bar x,\bar y)\in[L]$,  $J\in \alpha^{|\bar x|}$, and $\bar{\bo {c}}\in C^{|\bar y|}$,
$$\l \theta(\bo u,A'_J,\bar{\bo c})\rr\doteq\l \theta(\tau_\alpha({\bo a}_\alpha),A'_J,\bar{\bo c})\rr.$$
\item[(b)] For every algebraical formula $\psi(u,\bar y)\in[L]$ and finite $\bar{\bo b}\subseteq A'_\alpha BC$,
$$\l\psi({\bo u},\bar{\bo b})\rr\sqsubseteq\l{\bo a}_\alpha = \bo a_0\rr.$$
\end{itemize}

\emph{Proof of Claim 2.}
We must show that the set of $L^R$-formulas $\Gamma(\bo u)\cup\Delta(\bo u)$ is satisfiable in $\cu N$, where
$$\Gamma(\bo u)=\{\l\theta(\tau_\alpha(\bo a_\alpha),A'_J,\bar{\bo c})\rr\sqsubseteq\l\theta(\bo u,A'_J,\bar{\bo c})\rr\mid
 \theta\in[L],J\subseteq \alpha,\bar{\bo c}\in C^{<\BN}\},$$
$$\Delta(\bo u)=\{\l\psi(\bo u,\bar{\bo b})\rr\sqsubseteq\l \bo a_\alpha=\bo a_0\rr\mid \psi \mbox{ algebraical, } \bar{\bo b}\subseteq A'_\alpha BC\}.$$
The argument is similar to the proof of Claim 1 in Lemma \ref{l-single}, but with a different set of formulas.
We will implicitly make use of the fact that each event we mention belongs to $\cu{Z}$ and is therefore fixed by the automorphism $\tau_\alpha$.
In particular,
$$\l{\bo a}_\alpha=\bo a_0\rr\doteq\tau_\alpha(\l{\bo a}_\alpha=\bo a_0\rr)\doteq \l \tau_\alpha({\bo a}_\alpha)=\bo a'_0\rr$$
and
$$\l\theta(\tau_\alpha({\bo a}_\alpha),A'_J,\bar{\bo c})\rr\doteq
\tau_\alpha(\l\theta({\bo a}_\alpha,A_J,\bar{\bo c})\rr)\doteq\l\theta({\bo a}_\alpha,A_J,\bar{\bo c})\rr.$$
Let $\Gamma_0(\bo u), \Delta_0(\bo u)$ be finite subsets of $\Gamma(\bo u)$ and $\Delta(\bo u)$.
At almost every $\omega\in\l\bo a_\alpha=\bo a_0\rr$, $\Gamma_0(\tau_\alpha(\bo a_\alpha))\cup\Delta_0(\tau_\alpha(\bo a_\alpha)).$ holds.
By hypothesis (2), for each first order formula $\theta(u,\bar x,\bar z)$ and parameters $\bar{\bo c}\in C^{<\BN}$,
at almost all $\omega\in \l\bo a_\alpha\ne\bo a_0\rr,$ if $\theta(\tau_\alpha(\bo a_\alpha),A'_J,\bar{\bo c})(\omega)$ holds in $\cu M$ then
$\theta(u,A'_J,\bar{\bo c})(\omega)$ is satisfied by infinitely many $u$ in $\cu M$.
 Therefore at almost all $\omega\in \l\bo a_\alpha\ne\bo a_0\rr,$ $\Gamma_0(\bo u)$ is satisfied
by infinitely many elements of $\cu M$, and $\Delta_0(\bo u)$ is satisfied by all but finitely many elements of $\cu M$.
Since $\cu N$ has perfect witnesses, it follows that $\Gamma_0(\bo u)\cup\Delta_0(\bo u)$ is satisfiable in $\cu N$.  By the saturation of $\cu N$,
$\Gamma(\bo u)\cup\Delta(\bo u)$ is satisfiable in $\cu N$.  This proves Claim 2.
\medskip

We now take $\bo a'_\alpha$ to be an element $\bo u$ of $\cu K$ that satisfies conditions (a) and (b) of Claim 2.
By (1), every quantifier-free formula of $L^R$
has the same truth values at the parameters $A'_{\alpha+1}C\cu{Z}$ as at the parameters $A_{\alpha+1}C\cu{Z}$. Thus by quantifier elimination in $\cu N$,
$$ A'_{\alpha + 1}\equiv_{C \cu{Z}} A_{\alpha+1}.$$
To complete the induction, it remains to prove that $A'_{\alpha+1}\ind[\th\omega]_C \ B$.
By Lemma  \ref{l-ind[eomega]-easy}, $\ind[\th\omega] \ \ $ has finite character and monotonicity,
so it suffices to show that for every finite $I\subseteq\alpha+1$ such that $0\in I$,
$A'_I\ind[\th\omega]_{C} \ B$.  We will prove this by induction on the cardinality of $I$.  By the choice of $\bo a'_0$, we have
$\bo a'_0\ind[\th\omega]_{C} \ B$.  If $I\subseteq\alpha$, then $A'_I\ind[\th\omega]_{C} \ B$ follows from monotonicity and the hypothesis that (3)
holds when $\beta=\alpha$.  So it is enough to prove $A'_I\ind[\th\omega]_C \ B$ under the assumption that $0\in J\subseteq \alpha$, $I=J\cup\{\alpha\}$,
and  $A'_J\ind[\th\omega]_C \ B$.  Note that because Claim 2 (a) holds when $\bo u=\bo a'_\alpha$,
$$\l\bo a_\alpha=\bo a_0\rr\doteq\l\tau_\alpha(\bo a_\alpha)=\bo a'_0\rr\doteq\l\bo a'_\alpha=\bo a'_0\rr.$$
Let $B_0\subseteq B$ and $C_0\subseteq C$ be countable.  Then there is a countable $C_1\in[C_0,C]$ such that
 $\mu(\l A'_{J}\ind[\th]_{C_1} B_0\rr)=1$.  Now consider any tuple
$\bar{\bo a}'\subseteq A'_J$. Because $0\in J$, we have ${\bo a}'_0\bar{\bo a}'\subseteq A'_J$, so by monotonicity,
$\mu(\l{\bo a}'_0\bar{\bo a}'\ind[\th]_{C_1} B_0\rr)=1.$  Hence
$$ \l {\bo a}'_0\in\acl(\bar{\bo a}' B_0 C_1)\rr\sqsubseteq \l{\bo a}'_0\in\acl(\bar{\bo a}'C_1)\rr.$$
Because Claim 2 (b) holds when $\bo u=\bo a'_\alpha$, we have
$$\l {\bo a}'_\alpha\in\acl(\bar {\bo a}' B_0 C_1)\rr\sqsubseteq\l\bo a'_\alpha=\bo a'_0\rr,$$
and hence
$$\l {\bo a}'_\alpha\in\acl(\bar {\bo a}' B_0 C_1)\rr\sqsubseteq\l {\bo a}'_0\in\acl(\bar {\bo a}' C_1)\rr,$$
so
$$\l {\bo a}'_\alpha\in\acl(\bar {\bo a}' B_0 C_1)\rr\sqsubseteq\l {\bo a}'_\alpha\in\acl(\bar {\bo a}' C_1)\rr.$$
This means that $\mu(\l {\bo a}'_\alpha\thind_{\bar{\bo a}' C_1} \ B_0\rr)=1,$
and by the Pairs Lemma (Result \ref{r-pairs}), $\mu(\l {\bo a}'_\alpha \bar{\bo a}'\thind_{C_1} B_0\rr)=1.$
This shows that $\mu(\l A'_I\thind_{C_1} B_0\rr)=1,$ so we have $A'_I\ind[\th\omega]_C \ B$ as required.

Finally, since $\ind[\th\omega] \ \ \ $ has finite character, it follows that $A'\equiv_{C\cu{Z}} A$ and $A'_{\alpha+1}\ind[\th\omega]_C \ B$.
This proves Lemma \ref{l-wlog}.
\end{proof}

We are now ready to prove Theorem \ref{l-full-existence}, showing that $\ind[\th\omega] \ \ $ has full existence.

\begin{proof}[Proof of Theorem \ref{l-full-existence}]
Let $D=\{{\bo d}_\alpha\mid\alpha<\kappa\}$. We must construct a set $D'\equiv_{C\cu{Z}} D$ such that $D'\ind[\th\omega]_C \ B.$
We may assume without loss of generality that $\cu{Z}\supseteq\acl_\BB(BCD)$.
In order to use Lemma \ref{l-wlog}, we inductively define a new enumerated set $A=\{{\bo a}_\alpha\mid \alpha<\kappa\}$ such that for each $\alpha<\kappa$,
\begin{itemize}
\item[(a)] $A_\alpha\subseteq\acl(CD_\alpha)$,
\item[(b)]
$(\forall^c G\subseteq A_\alpha C)\l \bo a_\alpha \in\acl(G)\rr\sqsubseteq\l \bo a_\alpha = {\bo a}_0\rr.$
\end{itemize}
We first put ${\bo a}_0={\bo d}_0$.
Suppose $\alpha>0$ and we have already defined $A_\alpha=\{{\bo a}_\beta\mid\beta<\alpha\}$, and that $A_\alpha\subseteq\acl(CD_\alpha)$.
By Lemma \ref{l-blanket-local-char} there is a
countable set $X_\alpha\subseteq A_\alpha C$ that blankets $\bo d_\alpha$ in $A_\alpha C$. Let
$$ \bo a_\alpha(\omega) = \begin{cases} \bo a_0(\omega) \mbox{ if } \bo d_\alpha(\omega)\in \acl(X_\alpha(\omega))\\
                                        \bo d_\alpha(\omega) \mbox{ otherwise}.
                        \end{cases}
$$
We have
$$\l {\bo d}_\alpha\in \acl(X_\alpha)\rr\in\sigma(\fo_\BB(CD_{\alpha+1}))=\acl_\BB(CD_{\alpha+1}),$$
so  $\bo a_\alpha\in\acl(CD_{\alpha+1})$ and (a) holds.  Moreover,
$$\l \bo d_\alpha\in\acl(X_\alpha)\rr\sqsubseteq \l \bo a_\alpha = \bo a_0\rr \mbox{ and }\l \bo a_\alpha\ne\bo d_\alpha\rr\sqsubseteq\l\bo a_\alpha = \bo a_0\rr.$$
Since $X_\alpha$ blankets $\bo d_\alpha$ in $A_\alpha C$, for each algebraical formula $\psi(u,A_\alpha C)$ we have
$$\l\psi({\bo d}_\alpha,A_\alpha C)\rr\sqsubseteq\bigcup_{\bar{\bo x}\subseteq X_\alpha}\l\psi({\bo d}_\alpha,\bar{\bo x})\rr.$$
Therefore for each countable set $G\subseteq A_\alpha C$,
$$\l \bo d_\alpha\in \acl(G)\rr\sqsubseteq\l \bo d_\alpha\in\acl(X_\alpha)\rr\sqsubseteq\l\bo a_\alpha=\bo a_0\rr,$$
so
$$(\l \bo a_\alpha =\bo d_\alpha\rr\cap\l \bo a_\alpha\in \acl(G)\rr)\sqsubseteq\l\bo a_\alpha=\bo a_0\rr.$$
 It follows that (b) holds, and the inductive definition is complete.  Thus $A$ satisfies the hypotheses of Lemma \ref{l-wlog}.
By Lemma \ref{l-wlog}, there exists $A'\equiv_{C\cu{Z}} A$ such that $A'\ind[\th\omega]_C \ B.$
By the saturation of $\cu N$, there is an automorphism $\bo u\mapsto\bo u'$ of $\cu N$ that is fixed on $C\cu{Z}$ and sends $\bo a_\alpha$ to $\bo a'_\alpha$
for each $\alpha<\kappa$.  Then $D'\equiv_{C\cu{Z}} D$.   We will show that $D'\ind[\th\omega]_C \ B.$

 Each countable subset of $D'$ is equal to $D'_I$ for some countable $I\subseteq\kappa$.
Fix a countable $I\subseteq\kappa$, and let $ C_0, B_0$ be countable subsets of $ C, B$ respectively.
For each $\beta\in I$, $X_\beta$ is a countable subset of $A_\beta C$, so there are countable $J\subseteq\kappa$ and $C_1\in [C_0,C]$
such that $J\supseteq I$ and $\bigcup_{\beta\in I}X_\beta\subseteq A_J C_1$.

By the definition of ${\bo a}_\beta$, we always have
$${\bo d}_\beta(\omega)\in\acl(X_\beta)(\omega)\cup\{{\bo a}_\beta(\omega)\}.$$
For each $\beta\in I$, $X_\beta\cup\{{\bo a}_\beta\}\subseteq A_J C_1$.  Therefore
$$\mu(\l D_I\subseteq\acl(A_J C_1)\rr)=\mu(\l D'_I\subseteq\acl(A'_J C_1)\rr)=1.$$
  Since $A'\ind[\th\omega]_C \ B$, we have
$\mu(\l A'_J\ind[\th]_{C_2} B_0\rr)=1$ for some countable set $C_2\in[C_1,C]$.  It is obvious from the definition of $\ind[e]$ that $B\ind[e]_C A\Rightarrow B\ind[e]_C \acl(AC)$
in $\cu M$, so by symmetry we have
 $\mu(\l \acl(A'_J C_2)\ind[\th]_{C_2} B_0\rr)=1$. By monotonicity, $\mu(\l D'_I\ind[\th]_{C_2} B_0\rr)=1.$
Therefore $D'\ind[\th\omega]_C \ B$.
\end{proof}

\begin{thm}  \label{t-pointwise-ind[e]}  Suppose $T$ has the exchange property.  Then
$\ind[\th\omega] \ \ $ is a pointwise anti-reflexive independence relation on $\cu N$ with small local character.
\end{thm}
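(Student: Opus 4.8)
The plan is to recognize that this theorem is the capstone that assembles the preceding results, and that almost all the genuine work has already been carried out. Unwinding the definitions, I must check that $\ind[\th\omega]$ satisfies the eight axioms for an independence relation (with finite, not merely countable, character), that it is pointwise anti-reflexive, and that it has small local character.

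First I would collect what is already in hand. Lemma \ref{l-ind[eomega]-easy} supplies the five basic axioms (invariance, monotonicity, base monotonicity, transitivity, normality), finite character, pointwise anti-reflexivity, the countable union property, and small local character. Small local character immediately gives local character with the least bound $\kappa(D)=(|D|+\aleph_0)^+$ by Remark \ref{r-countably-localchar}(1), so axiom (8) is taken care of, as is the ``small local character'' clause in the statement. Theorem \ref{l-full-existence} supplies full existence. Thus the only axiom still to be verified is extension (axiom (6)).

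To obtain extension, I would invoke Remarks \ref{r-fe-vs-ext}(i): a relation with invariance, monotonicity, transitivity, normality, full existence, and symmetry automatically has extension. All of these except symmetry are already listed above, and full existence is Theorem \ref{l-full-existence}. For symmetry, the exchange property enters one last time: by Lemma \ref{l-exchange-symmetry}, the exchange property is equivalent to $\ind[e]$ (equivalently $\thind$, by Theorem \ref{t-indep-FO}(i)) having symmetry over $\cu M$, and Proposition \ref{p-omega-cbased}(2) transports symmetry from $\thind$ to its pointwise version $\ind[\th\omega]$ over $\cu N$. With symmetry in hand, extension follows, and the verification is complete.

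The hard part is not in this final assembly at all; the substantive obstacle, full existence, was already overcome in Theorem \ref{l-full-existence} by the delicate gluing-and-saturation construction. The only real content of the present step is the structural observation that extension is free once full existence and symmetry are available, together with the care of tracking down symmetry as the precise place where the exchange hypothesis is reused. Once these are noted, the theorem follows by citing Lemma \ref{l-ind[eomega]-easy}, Theorem \ref{l-full-existence}, Remarks \ref{r-fe-vs-ext}(i), Lemma \ref{l-exchange-symmetry}, and Proposition \ref{p-omega-cbased}(2).
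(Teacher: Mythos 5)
Your proposal is correct and follows essentially the same route as the paper, whose entire proof is the citation ``By Lemma \ref{l-ind[eomega]-easy} and Theorem \ref{l-full-existence}.'' In fact you are more careful than the paper: you explicitly supply the missing extension axiom via symmetry (Lemma \ref{l-exchange-symmetry} plus Proposition \ref{p-omega-cbased}(2)) and Remark \ref{r-fe-vs-ext}(i), a step the paper leaves implicit.
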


\begin{proof}  By Lemma \ref{l-ind[eomega]-easy} and Theorem \ref{l-full-existence}.
\end{proof}

\begin{cor}  Suppose $T$ is o-minimal. Then in $T^R$,
$\ind[\th\omega] \ \ $ is a pointwise anti-reflexive independence relation on $\cu N$ with small local character.
\end{cor}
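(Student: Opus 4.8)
The plan is to deduce this corollary directly from Theorem \ref{t-pointwise-ind[e]}, whose hypothesis is merely that $T$ has the exchange property. Thus the only thing I would need to verify is that o-minimality implies the exchange property; once that is in hand, the conclusion---that $\ind[\th\omega]$ is a pointwise anti-reflexive independence relation on $\cu N$ with small local character---is exactly the statement of Theorem \ref{t-pointwise-ind[e]} specialized to this $T$.

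To see that an o-minimal $T$ has the exchange property, I would appeal to the classical theory of o-minimal structures, as already flagged in the discussion following the definition of the exchange property (where it is noted that every o-minimal theory has this property). The monotonicity theorem guarantees that every definable unary function in a model of $T$ is, on each of finitely many intervals, constant or strictly monotone and continuous. From this one obtains that $(\cu M,\acl^{\cu M})$ is a pregeometry: if $a\in\acl(bC)\setminus\acl(C)$, then $a$ is one of finitely many solutions of some formula $\varphi(x,b,\bar c)$, and an exchange argument using the local monotonicity of the witnessing definable functions shows that $b\in\acl(aC)$. Hence the hypothesis of Theorem \ref{t-pointwise-ind[e]} is satisfied.

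There is genuinely no obstacle at this stage: all of the substantive work---verifying the basic axioms, pointwise anti-reflexivity, finite character, and small local character, and above all establishing full existence for $\ind[\th\omega]$ in Theorem \ref{l-full-existence}---has already been carried out under the weaker exchange hypothesis. I would therefore present the corollary as a one-line appeal to Theorem \ref{t-pointwise-ind[e]} together with the fact that o-minimal theories have the exchange property.
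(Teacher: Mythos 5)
Your proposal is correct and is exactly the paper's argument: the corollary is a one-line specialization of Theorem \ref{t-pointwise-ind[e]}, justified by the standard fact that every o-minimal theory has the exchange property. The extra detail you give via the monotonicity theorem is a correct (if unnecessary here) justification of that standard fact.
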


\begin{proof}  Every o-minimal theory has the exchange property.
\end{proof}

\begin{cor}
Suppose $T$ has the exchange property and $\ind[a]$ is an independence relation in $T$.  Then in $T^R$, $\ind[a\omega] \ $
is a pointwise anti-reflexive independence relation with small local character.
\end{cor}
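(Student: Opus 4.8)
The plan is to reduce the corollary to the already-proven Theorem~\ref{t-pointwise-ind[e]} by showing that, under the stated hypotheses, $\ind[a]$ and $\thind$ coincide as ternary relations on $\cu M$; then their pointwise versions $\ind[a\omega]$ and $\ind[\th\omega]$ coincide on $\cu N$, and all the desired properties transfer for free. Since $T$ has the exchange property, Theorem~\ref{t-pointwise-ind[e]} already guarantees that $\ind[\th\omega]$ is a pointwise anti-reflexive independence relation with small local character, so once the identification is made there is nothing left to check.

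The heart of the matter is to establish $\ind[a]=\thind$ in $\cu M$. First I would observe that $\ind[a]$ is always anti-reflexive: $a\ind[a]_B a$ unwinds to $\acl(aB)=\acl(B)$, forcing $a\in\acl(B)$. In first order logic $\ind[a]$ also has finite character, hence countable character. Combined with the hypothesis that $\ind[a]$ is an independence relation in $T$, this makes $\ind[a]$ a strict countable independence relation. By Result~\ref{f-weakest}(3), $\thind$ is the weakest strict countable independence relation, so $\ind[a]\Rightarrow\thind$; together with the unconditional implications $\thind\Rightarrow\ind[M]\Rightarrow\ind[a]$, this yields $\ind[a]=\thind$.

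With the equality in hand, I would invoke Definition~\ref{d-pointwise}: both $\ind[a\omega]$ and $\ind[\th\omega]$ are the unique countably based relations over $\cu N$ that, for countable $A,B,C$, hold precisely when $A(\omega)\ind[a]_{C(\omega)}B(\omega)$, respectively $A(\omega)\thind_{C(\omega)}B(\omega)$, holds almost surely. Because $\ind[a]=\thind$ on $\cu M$, these defining conditions are literally identical for every $\omega$, so the two countably based relations agree on countable sets and are therefore equal. Applying Theorem~\ref{t-pointwise-ind[e]} then completes the proof. I do not anticipate any real obstacle; the only point requiring care is recognizing that the extra hypothesis is exactly what promotes $\ind[a]$ to a strict countable independence relation, which by the weakest-relation characterization collapses it onto $\thind$, after which everything follows from the uniqueness of countably based relations.
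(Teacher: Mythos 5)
Your proposal is correct and follows essentially the same route as the paper: the paper's proof is the one-line observation that the hypothesis forces $\ind[a]=\thind$ in $T$, after which Theorem \ref{t-pointwise-ind[e]} applies, and your argument simply fills in the details of that identification (via Result \ref{f-weakest} and the chain $\thind\Rightarrow\ind[M]\Rightarrow\ind[a]$) and of the resulting equality $\ind[a\omega]=\ind[\th\omega]$ of countably based relations.
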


\begin{proof}  Since $\ind[a]$ is an independence relation, $\ind[a]=\thind$ in $T$.  The result now follows from Theorem \ref{t-pointwise-ind[e]}.
\end{proof}

\section{Real Rosy Randomizations}  \label{s-rosy}

\subsection{Main Theorem}

We know from Theorem \ref{t-pointwise-ind[e]} above that if $T$ has the exchange property then $\cu N$ has a pointwise anti-reflexive independence relation.  In Theorem \ref{t-main} below we will prove that if $T$ has the exchange property and also has $\acl=\dcl$, then $\cu N$ has an anti-reflexive independence relation, so $T^R$ is real rosy.
In the following, let $\ind[\th \wedge] \ =\ind[\th\omega] \ \wedge\ind[d\BB] \ $.  Recall that if $T$ has the exchange property, then $\thind=\ind[e]$ in $T$,

\begin{thm}  \label{t-main}
Suppose $T$ has the  exchange property and $\acl=\dcl$.
Then in $T^R$, $\ind[\th \wedge] \ $ is a strict independence relation that has finite character, has small local character, and is countably based.
Moreover, $T^R$ is real rosy, and in $T^R$, $\thind$ is a strict countable independence relation that
has small local character and is countably based.
\end{thm}

Before proving \ref{t-main} we state two corollaries whose hypotheses imply that $T$ has $\acl=\dcl$ and the exchange property.

\begin{cor}  If $\acl(A)=A$ for all $A$ in the big model of $T$, then $T^R$ is real rosy.
\end{cor}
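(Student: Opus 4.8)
The plan is to observe that this corollary is immediate from Theorem \ref{t-main}, and then to prove Theorem \ref{t-main} itself. Indeed, if $\acl(A)=A$ for all $A$ then $A\subseteq\dcl(A)\subseteq\acl(A)=A$ gives $\acl^{\cu M}=\dcl^{\cu M}$, and if $a\in\acl(bC)\setminus\acl(C)=(\{b\}\cup C)\setminus C$ then $a=b\in\acl(aC)$, so the exchange property holds; Theorem \ref{t-main} then applies. To prove Theorem \ref{t-main} I would build an explicit strict independence relation on $\cu N$ and invoke Result \ref{f-weakest}. The candidate is the conjunction $\ind[K]=\ind[\th\omega]\wedge\ind[d\BB]$, so that $A\ind[K]_C B$ means $A\ind[\th\omega]_C B$ and $A\ind[d\BB]_C B$; intuitively pointwise thorn independence controls the random-element sort while event-sort dividing independence controls the events. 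First I would check that $\ind[K]$ inherits the basic axioms, symmetry, and finite character: each of these axioms is preserved under conjunction by a direct argument, and both conjuncts satisfy them (Lemmas \ref{l-ind[eomega]-easy} and \ref{l-fB-basic}).

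Next I would establish small local character, using that both conjuncts have base monotonicity and small local character (Theorem \ref{t-pointwise-ind[e]} and Lemma \ref{l-fB-local}). Given small $A,B$ and $C_0\subseteq B$ with $|C_0|\le|A|+\aleph_0$, I would first choose $C_1\in[C_0,B]$ of size $\le|A|+\aleph_0$ with $A\ind[\th\omega]_{C_1} B$, then choose $C\in[C_1,B]$ of size $\le|A|+\aleph_0$ with $A\ind[d\BB]_{C} B$. Base monotonicity promotes $A\ind[\th\omega]_{C_1}B$ to $A\ind[\th\omega]_{C}B$, so $A\ind[K]_C B$; hence $\ind[K]$ has small local character, and in particular local character with the smallest possible bound.

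The heart of the argument is full existence, where both relations must be realized on a single witness. I would first use full existence of $\ind[d\BB]$ (Lemma \ref{l-fB-existence}) to obtain $A_1\equiv_C A$ with $A_1\ind[d\BB]_C B$. Setting $E=\acl_\BB(A_1BC)$, which is small by Result \ref{f-separable}, I would then apply the $E$-parametrized full existence of $\ind[\th\omega]$ (Theorem \ref{l-full-existence}) to obtain $A'\equiv_{CE}A_1$ with $A'\ind[\th\omega]_C B$. The point of freezing $E$ is that any automorphism witnessing $A'\equiv_{CE}A_1$ fixes pointwise every event of $\fo_\BB(A_1C)\cup\fo_\BB(BC)\subseteq E$, while carrying $\fo_\BB(A_1C)$ onto $\fo_\BB(A'C)$; hence $\fo_\BB(A'C)=\fo_\BB(A_1C)$ and $\fo_\BB(BC)$ is unchanged, so the condition defining $\ind[d\BB]$ is preserved and $A'\ind[d\BB]_C B$. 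Thus $A'\equiv_C A$ and $A'\ind[K]_C B$, which is full existence; extension of $\ind[K]$ then follows from full existence, symmetry, and the basic axioms by Remarks \ref{r-fe-vs-ext}(i).

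It remains to verify anti-reflexivity, which is where the hypothesis $\acl=\dcl$ enters. Assuming $\bo a\ind[K]_C\bo a$, from $\bo a\ind[\th\omega]_C\bo a$ and pointwise anti-reflexivity (Theorem \ref{t-pointwise-ind[e]}) I get $\bo a\in\acl^\omega(C)$, which upgrades to $\bo a\in\dcl^\omega(C)$ because $T$ has $\acl=\dcl$; this is clause (1) of Result \ref{f-dcl3} with base $C$. From $\bo a\ind[d\BB]_C\bo a$ together with Lemma \ref{l-ind[fB]-implies-ind[aB]} I get $\bo a\ind[a\BB]_C\bo a$, that is $\acl_\BB(\bo aC)=\acl_\BB(C)=\dcl_\BB(C)$, so for every functional $\varphi(u,\bar v)$ and $\bar{\bo c}\in C^{<\BN}$ the event $\l\varphi(\bo a,\bar{\bo c})\rr$ lies in $\dcl_\BB(C)$, giving clause (2). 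Result \ref{f-dcl3} then yields $\bo a\in\dcl(C)=\acl(C)$, so $\ind[K]$ is anti-reflexive. Therefore $\ind[K]$ is a strict countable independence relation and $T^R$ is real rosy by Result \ref{f-weakest}. I expect the full-existence step to be the main obstacle: neither conjunct is strict on its own, and reconciling the two sorts on one witness works only because Theorem \ref{l-full-existence} permits prescribing the frozen events $E$, which is precisely what shields $\ind[d\BB]$ while $\ind[\th\omega]$ is being arranged.
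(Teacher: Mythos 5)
Your proposal is correct and follows essentially the same route as the paper: the corollary is reduced to Theorem \ref{t-main} by the trivial observations that $\acl(A)=A$ implies both $\acl=\dcl$ and the exchange property, and Theorem \ref{t-main} is then proved exactly as in the paper's Theorem \ref{t-more}, by showing that $\ind[\th\omega]\ \wedge\ind[d\BB]\ $ is a strict countable independence relation (with the same use of the $E$-parametrized full existence of $\ind[\th\omega]\ $ to protect $\ind[d\BB]\ $, and the same two-clause application of Result \ref{f-dcl3} for anti-reflexivity). The only differences are cosmetic, e.g.\ your sequential choice of base sets for small local character in place of the paper's union $C_1\cup C_2$.
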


\begin{cor} \label{c-main-omin} If $T$ is  o-minimal, then $T^R$ is real rosy.
\end{cor}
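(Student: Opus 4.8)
The plan is to manufacture an explicit strict countable independence relation on $\cu N$ and then apply Result \ref{f-weakest}(3). Neither relation built so far works alone: by Theorem \ref{t-pointwise-ind[e]}, $\ind[\th\omega]$ is only \emph{pointwise} anti-reflexive, while by Proposition \ref{p-ind[fB]-not-antireflexive}, $\ind[d\BB]$ is not anti-reflexive at all. The idea is that their conjunction repairs this: I would set $\ind[K]=\ind[\th\omega]\wedge\ind[d\BB]$ and prove that $\ind[K]$ is a strict countable independence relation with small local character. The invariance, monotonicity, base monotonicity, transitivity, normality, symmetry, and finite character of $\ind[K]$ follow immediately from the same properties of each conjunct (Lemmas \ref{l-ind[eomega]-easy} and \ref{l-fB-basic}), since each of these axioms is preserved under conjunction by a direct check.

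For small local character I would avoid any union argument and instead enlarge the base twice. Given small $A,B$ and $C_0\subseteq B$ with $|C_0|\le|A|+\aleph_0$, apply small local character of $\ind[\th\omega]$ (Theorem \ref{t-pointwise-ind[e]}) to get $C_1\in[C_0,B]$ with $A\ind[\th\omega]_{C_1}B$ and $|C_1|\le|A|+\aleph_0$; then apply small local character of $\ind[d\BB]$ (Lemma \ref{l-fB-local}) starting from $C_1$ to get $C_2\in[C_1,B]$ with $A\ind[d\BB]_{C_2}B$ of the same size. Since $C_2\in[C_1,B]$, base monotonicity of $\ind[\th\omega]$ upgrades $A\ind[\th\omega]_{C_1}B$ to $A\ind[\th\omega]_{C_2}B$, so $A\ind[K]_{C_2}B$, as required.

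The crux is \textbf{full existence}, which is genuinely delicate for a conjunction because a single $A'$ must witness independence for both relations at once; this is exactly what the strengthened form of Theorem \ref{l-full-existence}, with its event parameter $E$, was designed to supply. I would first use full existence for $\ind[d\BB]$ (Lemma \ref{l-fB-existence}) to obtain $A_1\equiv_C A$ with $A_1\ind[d\BB]_C B$. Setting $E=\acl_\BB(BC)$ (small because $\acl_\BB=\dcl_\BB$ and $\dcl_\BB$ of a small set is small), Theorem \ref{l-full-existence} produces $A'\equiv_{CE}A_1$ with $A'\ind[\th\omega]_C B$. Now $A_1\ind[d\BB]_C B$ is a statement about the joint distribution of the events of $\fo_\BB(A_1 C)$ with those of $\fo_\BB(BC)\subseteq E$, and $\tp(A'/CE)=\tp(A_1/CE)$ fixes all such $\mu$-values by quantifier elimination (Result \ref{f-qe}); since probabilistic independence in $(\cu E,\mu)$ is determined by these joint distributions (Result \ref{f-probability-algebra}), we inherit $A'\ind[d\BB]_C B$. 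Hence $A'\equiv_C A$ and $A'\ind[K]_C B$. Extension then follows from full existence, symmetry, and the basic axioms via Remarks \ref{r-fe-vs-ext}.

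Finally, \textbf{anti-reflexivity}, where the two hypotheses combine through Result \ref{f-dcl3}. Suppose $\bo a\ind[K]_C\bo a$. From $\bo a\ind[\th\omega]_C\bo a$ and pointwise anti-reflexivity we get $\bo a\in\acl^\omega(C)$, which equals $\dcl^\omega(C)$ since $T$ has $\acl=\dcl$; this is clause (1) of Result \ref{f-dcl3}. From $\bo a\ind[d\BB]_C\bo a$ we have $\cu A_C\ind[d]_{\cu C}\cu A_C$ in $(\cu E,\mu)$, so anti-reflexivity of $\ind[d]$ in the stable theory $\APr$ gives $\cu A_C\subseteq\acl(\cu C)=\dcl_\BB(C)$ (Remark \ref{r-ind-B} and Result \ref{f-acl=dcl}); in particular $\l\varphi(\bo a,\bar{\bo c})\rr\in\dcl_\BB(C)$ for every functional $\varphi$ and $\bar{\bo c}\in C^{<\BN}$, which is clause (2). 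Result \ref{f-dcl3} then yields $\bo a\in\dcl(C)=\acl(C)$, so $\ind[K]$ is anti-reflexive. Thus $\ind[K]$ is a strict countable independence relation with small local character, and Result \ref{f-weakest}(3) gives that $T^R$ is real rosy, with $\thind$ the weakest such relation (the sharper form). The step I expect to be the main obstacle is full existence for the conjunction; everything else is either bookkeeping or a direct appeal to Result \ref{f-dcl3}.
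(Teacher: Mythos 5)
Your proposal is correct and follows essentially the same route as the paper's Theorem \ref{t-more}: form the conjunction $\ind[\th\omega]\wedge\ind[d\BB]\,$, transfer full existence across the two conjuncts using the event parameter $E$ in Theorem \ref{l-full-existence}, and get anti-reflexivity by feeding pointwise definability (from exchange plus $\acl=\dcl$) and definability of the events $\l\varphi(\bo a,\bar{\bo c})\rr$ into Result \ref{f-dcl3}. Your only deviations are cosmetic: you take $E=\acl_\BB(BC)$ and recover $A'\ind[d\BB]_C B$ by invariance of $\ind[d]$ under the type over $CE$, where the paper takes the larger $E=\acl_\BB(\tau(A)BC)$ so that the relevant events are literally fixed, and you obtain small local character by enlarging the base twice rather than taking a union.
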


\begin{proof}[Proof of Theorem \ref{t-main}]
Using only the hypothesis that $T$ has the exchange property, we show that $\ind[\th \wedge] \ \ $ is an independence relation with small local character
and is countably based.  Then
using the hypothesis that $T$ has $\acl=\dcl$, we show that $\ind[\th \wedge] \ \ $ is also anti-reflexive.

\emph{Basic axioms and finite character:}  By Lemmas \ref{l-fB-basic} and \ref{l-ind[eomega]-easy},
both $\ind[d\BB] \ $ and $\ind[\th\omega] \ \ $
satisfy the basic axioms and finite character.  It follows easily that $\ind[\th \wedge] \ \ $ satisfies the basic axioms and has finite character.

\emph{Small local character:}  Let $A, B$ be small subsets of $\cu K$.  By Lemmas \ref{l-fB-local} and \ref{l-ind[eomega]-easy},
there are sets $C_1, C_2\subseteq B$ of cardinality $\le |A|+\aleph_0$ such that $A\ind[\th\omega]_{C_1} \ B $ and $A\ind[d\BB]_{C_2} \ B$.
By base monotonicity for both $\ind[\th\omega] \ $ and $\ind[d\BB] \ $, we have $A\ind[\th \wedge]_{C_1\cup C_2} \ B$.

\emph{Full existence:}  Suppose $A, B, C$ are small subsets of $\cu K$.  By full existence for $\ind[d\BB] \ \ ,$  there is an automorphism $\tau$
of $\cu N$ that fixes $C$ such that $\tau(A)\ind[d\BB]_C \ B.$   Let $\cu{Z}=\acl_\BB(\tau(A)BC).$  By Theorem \ref{l-full-existence}, there
is an automorphism $\pi$ of $\cu N$ that fixes $C\cu{Z}$ such that $\pi(\tau(A))\ind[\th\omega]_C \ \ B.$
Then $\pi(\sa E)=\sa E$ for every $\sa E\in\cu Z$, so  $\pi(\tau(A))\ind[d\BB]_C \ B.$ Therefore $\pi(\tau(A))\equiv_C A$
and $\pi(\tau(A))\ind[\th \wedge]_C \ B,$ so $\ind[\th \wedge] \ \ $ has full existence.

\emph{Countably based:}  $\ind[\th\omega] \ $ is countably based by definition, and $\ind[d\BB] \ $ is countably based by Lemma \ref{l-fB-cbased}.
By Lemmas \ref{l-ind[e]-union}, \ref{p-omega-cbased}, and \ref{l-fB-basic}, $\ind[\th\omega] \ $ and $\ind[d\BB] \ $ have the countable union property.
Then by Proposition \ref{p-union-pair}, $\ind[\th \wedge] \ \ $ is countably based.

\emph{Anti-reflexive:}  Suppose $\bo a\ind[\th \wedge]_C  \ \bo a$.  By Lemma \ref{l-ind[fB]-implies-ind[aB]}, $\bo a\ind[a\BB]_C \ \ \bo a.$
By Lemma \ref{l-ind[eomega]-easy}, $\ind[\th\omega] \ \ $ is pointwise anti-reflexive, so $\bo a\in\acl^\omega(C)$.   Hence there is a countable $D\subseteq C$ such that
$\bo a(\omega)\in\acl(D(\omega)) \as.$  We are assuming that $T$ has $\acl=\dcl$, so
$\bo a$ is pointwise definable over $D$. For every functional formula $\psi(u,\bar v)$ and tuple $\bar{\bo c}\in C^{<\BN}$,
we have $\l\psi(\bo a,\bar{\bo c})\rr\in \dcl_\BB(\bo a C)$.  But $\bo a\ind[a\BB]_C \  \bo a,$ so
$\acl_\BB(\bo a C)\subseteq \acl_\BB(C)$.  Then by Result \ref{f-acl=dcl}, $\dcl_\BB(\bo a C)\subseteq \dcl_\BB(C)$, and by Result \ref{f-dcl3}, $\bo a\in\dcl(C)\subseteq\acl(C)$.

By Result \ref{f-weakest}, $\ind[\th \wedge] \ \Rightarrow\thind$.  By Remark \ref{r-weaker},
$\thind$ has small local character, so $T^R$ is real rosy.  By Proposition \ref{p-thorn-cbased}, $\thind$ is countably based.
\end{proof}

\subsection{Some Questions}

\begin{question}  Suppose $T$ has the exchange property and $\acl=\dcl$.  In $T^R$, does $\ind[\th \wedge] \ =\thind$?  Does $\thind$  have finite character?
\end{question}

%\begin{question}  Suppose $T$ is stable and has the exchange property and $\acl=\dcl$.  In $T^R$, does $\ind[\th \wedge]=\ind[d]$?
%\end{question}

By Theorem \ref{t-indep-FO}, if $T$  has the exchange property then $\ind[M] \ $ for $T$ has symmetry.  By Corollary 5.4 of [Ad3], if $\ind[M] \ $ for $T$ has symmetry,
then $T$ is real rosy.  So it is natural to ask the following question.

\begin{question}  Suppose $\ind[M] \ $ for $T$ has symmetry and $T$ has $\acl=\dcl$.  Is $T^R$ real rosy?
\end{question}

In view of Corollary \ref{c-main-omin}, we ask:

\begin{question} \label{q-rosy}  If $T$ is o-minimal, must $T^R$ be rosy, or at least rosy for some natural family of imaginaries?
\end{question}

It follows from [VDD] that every o-minimal expansion of an ordered abelian group with a positive constant eliminates imaginaries.
In [EG] it is shown that every real rosy first order theory that (weakly) eliminates imaginaries is rosy, and every real rosy continuous theory that (weakly) eliminates finitary imaginaries is rosy for finitary imaginaries.  This leads to the following question.

\begin{question} \label{q-elim} If $T$  eliminates imaginaries, must $T^R$ eliminate finitary imaginaries, or at least eliminate some natural family of imaginaries?
\end{question}

In [EG], many families of imaginaries and corresponding variants of the notion of rosiness are considered in continuous logic.  This suggests that Questions \ref{q-rosy} and \ref{q-elim} may be major projects.
Proposition \ref{r-elim} below gives an easy affirmative answer to Question \ref{q-elim} for a very restricted family of imaginaries.

We say that a formula (or finitary predicate) $\Theta(\bar x,\bar z)$ \emph{eliminates imaginaries} for a formula
(or finitary predicate) $\Phi(\bar x,\bar y)$ if for every $\bar{\bo b}$ there is a unique $\bar{\bo c}$ such that for all $\bar{\bo a}$ we have
$\Phi(\bar{\bo a},\bar{\bo b})=\Theta(\bar{\bo a},\bar{\bo c}).$  Recall from [Po] that a first order theory eliminates imaginaries if and only if for every formula $\varphi$ there is a formula $\theta$ that eliminates imaginaries for $\varphi$.  By [EG], a continuous theory eliminates finitary imaginaries if and only if for every finitary predicate $\Phi$ there is a finitary predicate $\Theta$ that eliminates imaginaries for $\Phi$.

\begin{prop}  \label{r-elim}  If
$\theta(\bar x,\bar z)$ eliminates imaginaries for $\varphi(\bar x,\bar y)$ in $\cu{M}$, then the formula
$\mu(\l\theta(\bar x,\bar z)\rr\sqcap\sa X)$ eliminates imaginaries for $\mu(\l\varphi(\bar x,\bar y)\rr\sqcap\sa X)$ in $\cu N$.
\end{prop}

\begin{proof}  Take a tuple $\bar{\bo b}$ in $N$.  We have
$$\mu(\l (\exists \bar z)(\forall \bar x)[\varphi(\bar x,\bar{\bo b})\leftrightarrow\theta(\bar x,\bar z)]\rr)=1.$$
We show that in $\cu N$ there is a unique tuple $\bar{\bo c}$ such that
\begin{equation}  \label{eq-elim}
(\forall \bar{\bo{a}})(\forall\sa A)
[\mu(\l \varphi(\bar {\bo a},\bar{\bo b})\rr\sqcap\sa A)=\mu(\l\theta(\bar {\bo a},\bar{\bo c})\rr\sqcap\sa A)].
\end{equation}
By Fullness, there is a tuple $\bar{\bo c}$ in $N$ such that
$$\mu(\l (\forall \bar x)[\varphi(\bar x,\bar{\bo b})\leftrightarrow\theta(\bar x,\bar{\bo c})]\rr)=1.$$
Then for every $\bar{\bo a}$ in $N$, we have
$\l \varphi(\bar {\bo a},\bar{\bo b})\rr=\l\theta(\bar {\bo a},\bar{\bo c})\rr,$ so (\ref{eq-elim}) holds for $\bar{\bo c}$.

To prove uniqueness, suppose  $\bar{\bo d}\ne \bar{\bo c}$.  Then $\mu(\l \bar{\bo d}\ne \bar{\bo c}\rr)>0$.  Let
$$\sa E=\l (\exists \bar x)(\varphi(\bar x,\bar{\bo b})\wedge \neg\theta(\bar x,\bar{\bo d}))\rr$$
and
$$\sa F=\l (\exists \bar x)(\neg\varphi(\bar x,\bar{\bo b})\wedge \theta(\bar x,\bar{\bo d}))\rr.$$
Then either $\mu(\sa E)>0$ or $\mu(\sa F)>0$.  Suppose $\mu(\sa E)>0$. By Fullness, there is a tuple $\bar{\bo a}$ in $N$ such that
$$\sa E\subseteq\l\varphi(\bar{\bo a},\bar{\bo b})\wedge \neg\theta(\bar{\bo a},\bar{\bo d})\rr.$$
Then
$$ \mu(\l\theta(\bar{\bo a},\bar{\bo d})\rr\sqcap\sa E)=0<\mu(\sa E)=\mu(\l\varphi(\bar{\bo a},\bar{\bo b})\rr\sqcap\sa E).$$
Therefore
$$(\exists \bar{\bo{a}})(\exists\sa A)
[\mu(\l \varphi(\bar {\bo a},\bar{\bo b})\rr\sqcap\sa A)\ne\mu(\l\theta(\bar {\bo a},\bar{\bo c})\rr\sqcap\sa A)],$$
and (\ref{eq-elim}) fails for $\bar{\bo d}$.  By a similar argument, if $\mu(\sa F)>0$ then (\ref{eq-elim}) fails for $\bar{\bo d}$.
This contradiction shows that $\bar{\bo c}$ is the unique tuple that satisfies (\ref{eq-elim}), as required.
\end{proof}

\section*{Review of independence relations}

 $\ind[a]$ (\emph{Algebraic}):
$A\ind[a]_CB$ iff $\acl(AC)\cap \acl(BC)=\acl(C)$.
\bigskip

 $\ind[M]$ \ :
$A\ind[M]_CB$ iff for every $D\in[C,\acl(BC)]$, we have $A\ind[a]_{D}B$.
\bigskip

 $\thind$ (\emph{Thorn}):
$A\thind_CB$ iff for every (small) $E\supseteq BC$, there is $A'\equiv_{BC}A$ such that $A'\ind[M]_CE$.
\bigskip

$\ind[d] \ $ (\emph{Dividing}):
In continuous logic, $A\ind[d]_C B$ iff there is no tuple $\bar a\in A^{<\BN}$ and  formula
$\Phi(\bar x,B,C)$ such that $\Phi(\bar a,B,C)=0$ and $\Phi(\bar x, B,C)$ divides over $C$.
\bigskip

$\ind[I], \ \ind[J], \ \ldots.$:  \emph{Arbitrary ternary relations}
\bigskip

$\ind[Ic] \ $ : The unique countably based relation that agrees with $\ind[I]$ on countable sets.
\bigskip

$\ind[b] \ $ (\emph{Blanketing}   (first order)):
$A\ind[b]_C B \ $ iff for every
$\varphi(\bar x,\bar y,\bar z)\in[L]$ and all tuples $\bar{ a}\in A^{|\bar x|}$, $\bar{ b}\in B^{|\bar y|}$ and  $\bar{ c}\in C^{|\bar z|}$,
there exists $\bar{ d}\in C^{|\bar y|}$ such that
$$ \cu M\models\varphi(\bar{ a},\bar{ b},\bar{ c})\Rightarrow\varphi(\bar{ a},\bar{ d},\bar{ c}).$$

$\ind[e] \ $ (\emph{Exchange})  (first order):
$A\ind[e]_C B$ iff for all finite $\bar a\in A^{<\BN}$,
$$ A\cap \acl(\bar a B C)\subseteq \acl(\bar a C).$$

$\ind[d\BB] \ $ (\emph{Dividing in event sort}): $ A\ind[d\BB]_C \ B \Leftrightarrow \cu A_C\ind[d]_{\cu C} \cu B_C \mbox{ in } (\cu E, \mu).$
\bigskip

$\ind[a\BB] \ $ (\emph{Algebraic in event sort}): $A\ind[a\BB]_C \ \ B\Leftrightarrow\acl_\BB(AC)\cap\acl_\BB(BC)=\acl_\BB(C).$
\bigskip

$\ind[I\omega] \ $ (\emph{Pointwise}): Countably based with $ A\ind[I \omega]_C \  B\Leftrightarrow A(\omega)\ind[I]_{C(\omega)} B(\omega) \ \as.$
\bigskip

\emph{Pointwise and event dividing}:  $\ind[d \wedge] \ \ =\ind[d\omega] \ \wedge \ind[d\BB] \ $.
\bigskip

\emph{Pointwise and event thorn}:  $\ind[\th \wedge] \ =\ind[\th\omega] \ \wedge\ind[d\BB] \ \ $.

\section*{References}

%\noindent
%{\large\scshape References}

\vspace{2mm}

[Ad1]  Hans Adler. Explanation of Independence.  PH. D. Thesis, Freiburg, AxXiv:0511616 (2005).

[Ad2]  Hans Adler.  A Geometric Introduction to Forking and Thorn-forking.  J. Math. Logic 9 (2009), 1-21.

[Ad3]  Hans Adler.  Thorn Forking as Local Forking.  Journal of Mathematical Logic 9 (2009), 21-38.

[AGK]  Uri Andrews, Isaac Goldbring, and H. Jerome Keisler.  Definable Closure in Randomizations.
Annals of Pure and Applied Logic 166 (2015), pp. 325-341.

[AK]  Uri Andrews and H. Jerome Keisler.  Separable Randomizations of Models.  Journal of Symbolic Logic 80 (2015), 1149-1181.

[Be1]  Ita\"i{} Ben Yaacov. Schrodinger's Cat.  Israel J. Math. 153 (2006),  157-191.

[Be2]  Ita\"i{} Ben Yaacov.  On Theories of Random Variables.  Israel J. Math 194 (2013), 957-1012.

[Be3]  Ita\"i{} Ben Yaacov.  Simplicity in Compact Abstract Theories.  Journal of Mathematical Logic 3 (2003), 163-191.

[BBHU]  Ita\"i{} Ben Yaacov, Alexander Berenstein,
C. Ward Henson and Alexander Usvyatsov. Model Theory for Metric Structures.  In Model Theory with Applications to Algebra and Analysis, vol. 2,
London Math. Society Lecture Note Series, vol. 350 (2008), 315-427.

[BK] Ita\"i{} Ben Yaacov and H. Jerome Keisler. Randomizations of Models as Metric Structures.
Confluentes Mathematici 1 (2009), pp. 197-223.

[BU] Ita\"i{} Ben Yaacov and Alexander Usvyatsov. Continuous first order logic and local stability. Transactions of the American
Mathematical Society 362 (2010), no. 10, 5213-5259.

[CK]  C.C.Chang and H. Jerome Keisler.  Model Theory.  Dover 2012.

[EG]  Clifton Ealy and Isaac Goldbring.  Thorn-Forking in Continuous Logic.  Journal of Symbolic Logic
77 (2012), 63-93.

[GL1]  Isaac Goldbring and Vinicius Lopes.  Pseudofinite and Pseudocompact Metric Structures.  Notre Dame Journal of Formal Logic, Volume 56 (2015), 493-510.

[GL2]  Rami Grossberg and Olivier Lessman.  Dependence Relation in Pregeometries. Algebra Universalis 44 (2000), 199-216.

[HP] Ehud Hrushovski and Anand Pillay.  Groups definable in local fields and pseudo-finite fields.  Israel J. Math 85 (1994), 203--262.

[Ke1] H. Jerome Keisler.  Randomizing a Model.  Advances in Math 143 (1999), 124-158.

[Ke2] H. Jerome Keisler.  Finite Approximations of Infinitely Long Formulas, pp. 158-169 in Theory of Models, edited by J.Addison, L. Henkin, and A. Tarski,
North-Holland, Amsterdam, 1965.

[On]  Alf Onshuus, Properties and Consequences of Thorn Independence. J. Symbolic Logic 71 (2006), 1-21.

[Po]  Bruno Poizat. A Course in Model Theory: An Introduction to Contemporary Mathematical Logic.  Universitext, Springer-Verlag 2000.

[VDD] Lau van den Dries.  Tame Topology and O-minimal Structures.  Cambridge University  Press, 1998.

\end{document}